\newtheorem{theorem}{Theorem}[section]
\newtheorem{lemma}[theorem]{Lemma}
\newtheorem{proposition}[theorem]{Proposition}
\newtheorem*{externalprop}{Proposition}
\newtheorem{corollary}[theorem]{Corollary}
\theoremstyle{definition}
\newtheorem{definition}[theorem]{Definition}
\theoremstyle{remark}
\newtheorem{remark}[theorem]{Remark}
\newcommand{\bw}{{\mathbf w}}
\newcommand{\bQ}{{\mathbf Q}}
\newcommand{\bR}{{\mathbf R}}
\newcommand{\bZ}{{\mathbf Z}}
\newcommand{\id}{{\rm id}}
\newcommand{\Id}{{\rm Id}}
\newcommand{\lan}{\langle}
\newcommand{\ran}{\rangle}
\newcommand{\com}{\circ} %composition of functions                                           
\newcommand{\iso}{\cong} %isomorphism symbol   
\newcommand{\dsm}{\oplus} %direct sum symbol
\newcommand{\ten}{\otimes}    %tensor product symbol   
\DeclareMathOperator{\Coker}{Coker}
\DeclareMathOperator{\Ext}{Ext}
\DeclareMathOperator{\Hom}{Hom}
\DeclareMathOperator{\Ker}{Ker} %replaces built-in lower case version
\DeclareMathOperator{\Tor}{Tor}
\def\BOne{{\mathchoice {\rm 1\mskip-4mu l} {\rm 1\mskip-4mu l}
                          {\rm 1\mskip-4.5mu l} {\rm 1\mskip-5mu l}}}
\newcommand{\ra}{{\rightarrow}}
\newcommand{\lra}{{\longrightarrow}}
\newcommand{\lla}{{\longleftarrow}}
\newcommand{\UCMT}{\SelectTips{cm}{}}
\begin{document}
\title[Topology of joins] %Use the shortened version of the full title
      {Algebraic topology of certain Sasaki joins}
%Author information
\author{Candelario Casta\~{n}eda}
\author{Ross Staffeldt}
\address{Department of Mathematical Sciences\\
New Mexico State University\\
Las Cruces, NM 88003 USA}
\email[Candelario Casta\~{n}eda]{ccasta7@nmsu.edu}
\email[Ross Staffeldt]{ross@nmsu.edu}
\keywords{Sasaki manifolds, homotopy invariants}
%List four to six; Should characterize article.
%
\subjclass[2020]{Primary: 53C25; Secondary:  57R19, 57R15.}
\date{\today}
\begin{abstract}
  The join construction produces a third Sasaki manifold from two others, and we investigate
  the algebraic topology of the joins of circle bundles over surfaces of positive genus with
  weighted three-spheres. Topologically, such a join has the structure of a lens space bundle
  over a surface.
  We calculate invariants determined by the fundamental group, the homology, and the cohomology.
  We find that, in general, there is torsion in the integral homology of the join.
  The torsion gives rise to two linking forms, and we identify these linking forms. 
  \end{abstract}
\maketitle
\section{Introduction} \label{newintro}
The goal of this paper is to provide calculations of homotopical and homological invariants
 of certain five-dimensional manifolds.
The five-dimensional manifolds that interest us arise from a construction called ``join''
that takes two Sasaki manifolds and creates a third.
In section
\ref{sasakian_background}
we review the join construction
and previous work on classifying certain join-constructions
up to homeomorphism and diffeomorphism.
Our interest is in manifolds denoted
$M^3_g(n) \star_{\ell_1, \ell_2} S^3_{\bw}$,
where $M^3_g(n)$ is the total space of the circle bundle with first Chern class $n$
over the closed genus-$g$ surface $\Sigma_g$,
$S^3_{\bw}$ is the three-sphere with a circle action weighted by a pair of coprime integers $\bw = (w_1, w_2)$,
and $(\ell_1 , \ell_2)$ are other integer parameters.

In section
\ref{decomposition}
we decompose the particular examples
into submanifolds with additional structure.
In section
\ref{grouptheory}
we evaluate the first invariant, namely, the fundamental group.
In particular, we have central extensions
\begin{equation*}
  C_{\ell_2} \lra \pi_1(M^3_g(n) \star_{\ell_1, \ell_2} S^3_{\bw}) \lra \pi_1(\Sigma_g).
\end{equation*}
Information on the $L$-groups and $K$-groups associated with
$\pi_1(M^3_g(n) \star_{\ell_1, \ell_2} S^3_{\bw})$ is necessary input
to an eventual classification of these manifolds up to diffeomorphism
or homeomorphism.  A subsequent paper will discuss these issues.

In section
\ref{gysinsequences}
we compute invariants of the constituents of the splitting exhibited in section \ref{decomposition}.
In section
\ref{homologycalc}
we assemble these results with  Mayer-Vietoris sequences to compute the cohomology of
$M{=}M^3_g(n) \star_{\ell_1, \ell_2} S^3_{\bw}$.
In section
\ref{linking}
we compute the linking pairings
\begin{equation*}
  \Tor H_2(M; \bZ) \times \Tor H_2(M; \bZ) \lra \bQ/\bZ
  \quad \text{and} \quad
  \Tor H_1(M; \bZ) \times \Tor H_3(M; \bZ) \lra \bQ/\bZ.
\end{equation*}
%
% start section 2
%
\section{The join of Sasakian manifolds}
\label{sasakian_background}
Our eventual goal is the classification of certain
 Sasakian manifolds that appear for the first time in the paper of Boyer and T{\o}nnesen-Friedman\cite{BTF2014}
in the context of the Yamabe problem for the Sasaki-Einstein case. 
The feature of these manifolds is that they admit extremal Sasakian metrics of constant scalar curvature. 
As the topology of these manifolds is determined by specific constructions using
differential geometry, we review this background material  now.
\subsection{Sasakian Manifolds}
We recall that an almost contact structure on a differentiable manifold
$M$ is given by a triple $(\xi, \eta, \Phi)$, where $\xi$ is a vector field,( the Reeb vector field),
$\eta$ is a one form and $\Phi$ is a
tensor of type $(1,1)$, subject to the relations
\begin{equation*}
\eta(\xi) = 1 \, , \quad \Phi^2 = -\BOne + \xi \otimes \eta \, .  
\end{equation*}
The vector field $\xi$ defines the {\it characteristic foliation} 
${\mathcal F}_{\xi}$ with one-dimensional leaves, and the kernel of $\eta$
defines the codimension one sub-bundle ${\mathcal D}$. This yields a
canonical splitting 
\begin{equation*}
TM = {\mathcal D} \oplus L_{\xi} %\, , \label{cs}
\end{equation*}
where $L_\xi$ is the trivial line bundle generated by $\xi$.
The sub-bundle ${\mathcal D}$ inherits an almost complex structure 
$J$ by restriction of $\Phi$. Clearly, the dimension of $M$ must
be an odd integer $2n+1$. We refer to $(M,\xi,\eta, \Phi)$ as an almost
contact manifold. If we disregard the tensor $\Phi$ and
characteristic foliation, that is to say, if we just look at the sub-bundle
${\mathcal D}$ forgetting altogether its almost complex structure, we then
refer to the contact structure $(M, {\mathcal D})$, or simply
${\mathcal D}$ when $M$ is understood. 

A Riemannian metric $g$ on $M$ is said to be compatible
\cite[p.195]{BGbook}
with the almost contact structure $(\xi, \eta, \Phi)$ if for any pair of 
vector fields $X,Y$, we have
\begin{equation*}
g(\Phi(X), \Phi(Y))=g(X,Y)-\eta (X)\eta(Y).  
\end{equation*}
Any such $g$ induces an almost Hermitian metric on the sub-bundle 
${\mathcal D}$. We say that $(\xi, \eta, \Phi,g)$ is an almost contact 
metric structure
\cite[p.198]{BGbook}.
An almost contact metric structure $(\xi,\eta,\Phi,g)$ is said to be a 
contact metric structure if for all pair of vector fields
$X$, $Y$, we have that
\begin{equation*}
g(\Phi X, Y) = d\eta (X,Y). %\, . \label{tkf}
\end{equation*}
We then say that $(M,\xi,\eta,\Phi,g)$ is a contact metric manifold.

In the case that the induced almost
complex structure $J$ on ${\mathcal D}$ is integrable, we say the structure
$(\xi,\eta,\Phi,g)$ is a Sasakian structure and $(M,\xi,\eta,\Phi,g)$ is a Sasakian manifold.
\subsection{Circle Bundles over Riemann Surfaces}
Let $M^3_g(n)$ denote the total space of an $S^1$ bundle over a Riemann surface
$\Sigma_g$ of genus $g\geq 1$. Referring to the orientation class of $\Sigma_g$,
identify the Chern class, or Euler class, of the bundle with the integer $n$. 
There are many inequivalent Sasakian structures on $M^3_g(n)$ with constant scalar curvature.
These correspond to the inequivalent K\"ahler structures on the base $\Sigma_g$
arising from the moduli space ${\mathcal M}_g$ of complex structures on $\Sigma_g$.
When writing $M^3_g(n)$ we often assume that a transverse complex structure has been chosen
without specifying which one.
Thus, we write the Sasakian structure with constant scalar curvature on $M^3_g(n)$ as
$\mathcal S_1=(\xi_1,\eta_1,\Phi_1,g_1)$ and call it the {\it standard Sasakian structure}. 

We denote the fundamental group of $M^3_g(n)$ by $\Gamma_3(g)$.
Then from the long exact homotopy sequence of the bundle
$S^1\ra M^3_g(n) \ra \Sigma_g$
and the fact that $\pi_2(\Sigma_g)=0$ we have 
\begin{equation*} %\label{m3hyper}  
  0\lra  C  \lra \Gamma_3(g) \lra \Gamma_0 \lra  1
\end{equation*}
where $\Gamma_0$ is the fundamental group of $\Sigma_g$ and $C$ denotes the infinite cyclic group.
So $\Gamma_3(g)$ is an extension of $\Gamma_0$ by $C$.
Furthermore, the extension is central and it  does not split \cite{Sco83}.
\subsection{The Join Construction}\label{joinsec}
The join construction \cite[p.251ff]{BGbook} produces from a pair of Sasakian manifolds
of dimensions $2m{+}1$ and $2n{+}1$ a Sasakian manifold of dimension $2m{+}2n{+}1$.
We need only describe a special case, namely,  the join of $M^3_g(n)$ with the weighted sphere  $S^3_{\bw}$.
We specialize the description of the weighted sphere as presented in  \cite[Example 7.1.12]{BGbook},
to the three-dimensional case.
Let $\eta_0$ denote the standard contact form on $S^3$.
It is the restriction to $S^3$ of the 1-form $\sum_{i=1}^2(y_idx_i-x_idy_i)$ in $\bR^4$.
Let $\bw=(w_1,w_2)$ be a weight vector with $w_i\in \bZ^+$.
Then the weighted contact form is defined by
\begin{equation}\label{wcon1}
\eta_\bw =\frac{\eta_0}{\eta_0(\xi_\bw)}
\end{equation}
with Reeb vector field $\xi_\bw=\sum_{i=1}^2w_iH_i$,
where $H_i$ is the vector field on $S^3$ induced by $y_i\partial_{x_i}-x_i\partial_{y_i}$ on $\bR^4$.
Associated with the vector field $\xi_{\bw}$ is the circle action
\begin{equation}
  \label{eq:circleaction}
  S^1\times S^3_{\bw} \lra S^3_{\bw}, \quad \bigl(z, (z_1, x_2)\bigr) \mapsto (z^{w_1}z_1, z^{w_2}z_2).
\end{equation}
The three-sphere with these additional structures and this action is denoted by $S^3_{\bw}$

Now  consider the manifold
$M^3_g(n) {\times} S^3_{\bw}$ with contact forms $\eta_1,\eta_\bw$ on each factor,
respectively. There is a 3-dimensional torus $T^3$ acting on $M^3_g\times
S^3_{\bw}$ generated by the Lie algebra ${\mathfrak t}_3$ of vector fields
$\xi_1,H_1,H_2$ that leaves both 1-forms $\eta_1,\eta_\bw$ invariant.
Now the join construction
provides us with a new contact manifold by quotienting
$M^3_g(n){\times}S^3_{\bw}$ with an appropriate circle subgroup of $T^3$.
Let $(x,u)\in M^3_g(n)$ with $x\in \Sigma_g$ and $u$ in the fiber, and $[z_1,z_2]\in S^3_{\bw}$.
Consider the circle action on $M^3_g(n){\times} S^3_{\bw}$ given by 
\begin{equation}\label{joinact}
(x,u;[z_1,z_2])\mapsto (x,e^{-i\ell_2\theta}u; e^{iw_1\theta}z_1,e^{iw_2\theta}z_2)
\end{equation}
where the action $u\mapsto e^{i\ell_2\theta}u$ is that generated by $l_2\xi_1$.
We also assume, without loss of generality, that $\gcd(\ell_2,w_1,w_2)=1$.
The action (\ref{joinact}) is generated by the vector field $-\ell_2\xi_1+\xi_\bw$.
It has period $1/\ell_2$ on the $M^3_g(n)$ part, and,
if $\ell_1=\gcd(w_1,w_2)$, it will have period $1/\ell_1$ on the $S^3_{\bw}$  part.
With this in mind, when considering quotients,
we shall always take the pair $(w_1,w_2)$ to be relatively prime positive integers,
so $\ell_1{=}1$, and then the infinitesimal generator of the action is given by the vector field
$-\ell_2\xi_1+\xi_\bw$.  For conformity with the literature, we retain $\ell_1$ in the notation.
This generates a free circle action on $M^3_g(n){\times} S^3_{\bw}$ which we denote by $S^1(\ell_1,-\ell_2,\bw)$. 
\begin{definition}\label{join}
  The quotient space of $M^3_g(n){\times} S^3_{\bw}$ by the action $S^1(\ell_1,-\ell_2,\bw)$
  is called the $(\ell_1,\ell_2)$-join of $M^3_g(n)$ and $S^3_{\bw}$  and is denoted by
  \begin{equation*}
     M^3_g(n)\star_{\ell_1,\ell_2}S^3_{\bw}.
   \end{equation*}
   The space
$M^3_g(n)\star_{\ell_1,\ell_2}S^3_{\bw}$
will be a smooth orientable manifold if $\gcd(\ell_2, w_1 w_2 \ell_1)=1$.

Moreover, projection to $M^3_g(n)$ is equivariant and passage to quotients by $S^1$ yields a smooth map
\begin{equation*}
  p_0 \colon  M^3_g(n)\star_{\ell_1,\ell_2}S^3_{\bw} \lra \Sigma_g.
\end{equation*}
\end{definition}
\begin{remark}
  To avoid a proliferation of minus signs, the circle action we use is the opposite of the action adopted
  in \cite{BGbook}.  The verification that the join construction delivers a Sasakian manifold requires
  the exhibition of additional structures. These structures are not of interest for the calculation of
  topological invariants, although the interactions with the topology are worth investigating in the future.
\end{remark}

We are interested in determining the  diffeomorphism types of
$M^3_g(n)\star_{\ell_1,\ell_2}S^3_{\bw}$
when $\ell_2>1$, motivated by earlier work of  Boyer and T{\o}nnesen-Friedman \cite{BTF2014}.
They studied the case when the circle bundle is $M^3_g(1)$, so the Euler class is the dual of the orientation class, 
under the assumption $\ell_2=1$.
Under these assumptions the spaces $M^3_g(1)\star_{\ell_1,\ell_2}S^3_{\bw} $ are oriented $S^3$-bundles over $\Sigma_g$.
and they stated the following proposition.
\begin{externalprop}\cite[Proposition 3.1]{BTF2014}
  Up to homeomorphism and diffeomorphism, there are precisely two oriented $S^3$-bundles over the surface $\Sigma_g$,
  the trivial bundle $\Sigma_g{\times}S^3$ with Stiefel-Whitney class $w_2{=}0$ and the nontrivial bundle
  $\Sigma_g{\widetilde{\times}}S^3$ with $w_2{\neq}0$.
\end{externalprop}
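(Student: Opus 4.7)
The plan is to classify oriented $S^3$-bundles over $\Sigma_g$ via homotopy classes of their classifying maps, and to distinguish the two resulting classes by the second Stiefel-Whitney class. In the smooth category, isomorphism classes of oriented $S^3$-bundles over $\Sigma_g$ are in bijection with $[\Sigma_g, B\mathrm{Diff}^+(S^3)]$, and analogously with $[\Sigma_g, B\mathrm{Homeo}^+(S^3)]$ in the topological category. By the Smale conjecture of Hatcher and its topological analog, the inclusion $SO(4) \hookrightarrow \mathrm{Diff}^+(S^3)$ and its topological counterpart are homotopy equivalences, so both classifications reduce to computing $[\Sigma_g, BSO(4)]$.

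Next, I would run obstruction theory on $\Sigma_g$ viewed as a 2-dimensional CW complex. Since $SO(4)$ is connected with $\pi_1(SO(4)) = \bZ/2$, the classifying space $BSO(4)$ is 1-connected with $\pi_2(BSO(4)) = \bZ/2$. The only potentially nonzero primary obstruction to trivializing a classifying map over a 2-complex therefore lies in $H^2(\Sigma_g; \bZ/2) \cong \bZ/2$, and this obstruction class is naturally identified with the Stiefel-Whitney class $w_2$ of the associated oriented rank-four vector bundle. Consequently $[\Sigma_g, BSO(4)]$ has at most two elements, distinguished by $w_2$.

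To see that both values are realized, the product $\Sigma_g \times S^3$ provides the bundle with $w_2 = 0$. For the nontrivial class, I would collapse the 1-skeleton of $\Sigma_g$ to obtain a degree-one map $f\colon \Sigma_g \to S^2$ and pull back along $f$ the nontrivial oriented $S^3$-bundle over $S^2$, which corresponds to the generator of $\pi_1(SO(4)) = \bZ/2$. Since a degree-one map is surjective on $H^2(-;\bZ/2)$, the pullback has $w_2 \ne 0$, completing the classification.

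The main subtlety is the reduction of structure group: one needs the Smale conjecture (and its topological counterpart) to identify $B\mathrm{Diff}^+(S^3)$ and $B\mathrm{Homeo}^+(S^3)$ with $BSO(4)$ and thereby force the two classifications to coincide. Once that identification is in place, the obstruction count and the identification of the primary obstruction with $w_2$ are routine applications of standard techniques.
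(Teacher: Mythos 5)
The paper itself offers no proof of this statement: it is quoted verbatim from \cite{BTF2014} as an external proposition, so there is no in-paper argument to compare yours against. Your classifying-space argument is the standard route and is essentially sound: with Hatcher's theorem $\mathrm{Diff}(S^3)\simeq O(4)$ (and the corresponding fact for $\mathrm{Homeo}(S^3)$, which deserves an explicit citation since it is a separate, nontrivial input), both the smooth and topological classifications reduce to $[\Sigma_g, BSO(4)]$, and since $BSO(4)$ is $1$-connected with $\pi_2 \iso \bZ/2\bZ$, maps from the $2$-complex $\Sigma_g$ are classified by $H^2(\Sigma_g;\bZ/2\bZ) \iso \bZ/2\bZ$ with the invariant being $w_2$ of the associated rank-four bundle; your realization of the nontrivial class by pulling back the nontrivial bundle over $S^2$ along a degree-one map is fine.

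One step is missing if the proposition is read, as the notation $\Sigma_g{\times}S^3$ versus $\Sigma_g\widetilde{\times}S^3$ suggests, as a statement about the total spaces up to homeomorphism and diffeomorphism rather than about bundle isomorphism classes. Having shown there are exactly two bundle isomorphism classes, you still need that the two total spaces are not homeomorphic to one another. This is a short additional argument: if $E \ra \Sigma_g$ is the rank-four vector bundle and $S(E)$ its sphere bundle, then $T S(E) \dsm \bR \iso p^*T\Sigma_g \dsm p^*E$, and since $w_1(\Sigma_g)=w_2(\Sigma_g)=0$ one gets $w_2\bigl(S(E)\bigr) = p^*w_2(E)$; the Gysin sequence of an $S^3$-bundle shows $p^*$ is injective on $H^2(-;\bZ/2\bZ)$, so the total space of the nontrivial bundle has nonvanishing $w_2$ of its tangent bundle while $\Sigma_g{\times}S^3$ does not, and $w_2$ of the tangent (micro)bundle is a homeomorphism invariant. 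Adding that sentence closes the gap and also explains why the proposition phrases the dichotomy in terms of $w_2$.
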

Of course, they focused deeply on  features associated with the Sasakian geometry these manifolds.  The parallel
questions are beyond the scope of the present paper. 
\section{Decomposition of the join construction}\label{decomposition}
In this section we develop a decomposition of the join
$M^3_g(n) \star_{\ell_1, \ell_2} S^3_{\bw}$
which we will use to compute a number of algebraic-topological invariants of the join.
We first convert the defining presentation of the manifold to exhibit the join
as the total space of a bundle over the surface $\Sigma_g$.
Next, it is well known that a lens space has a Heegard splitting into two solid tori
glued along their boundaries.  We promote the splitting of the lens space into
a splitting
$M^3_g(n) \star_{\ell_1, \ell_2} S^3_{\bw}= B_1 \cup B_2$
in Proposition \ref{bundle_structures}.
We will see that $B_1$ is an $S^1{\times}D^2$-bundle over $\Sigma_g$,
$B_2$ is a $D^2{\times}S^1$-bundle,
and $B_1{\cap}B_2$ is an $S^1{\times}S^1$-bundle.

Additional structures are present. 
We show $B_1$ is a $D^2$-bundle over the zero section $C_1^3$,
corresponding to the core circle $S^1{\times}\{0\}$, and $C_1^3$ is
an $S^1{\times}\{0\}$-bundle over $\Sigma_g$. 
Similarly,  $B_2$ is a $D^2$-bundle over the zero section $C_2^3$,
corresponding to the core circle $\{0\}{\times}S^1$, which is therefore
a $\{0\}{\times}S^1$-bundle over $\Sigma_g$.
We exploit these extra structures in subsequent sections to  calculate invariants of the join.
The Euler classes of $C_1^3$, resp., $C_2^3$, and the cohomology groups of $C_1^3\simeq B_1$, resp., $C_2^3\simeq B_2$
are computed in 
Proposition \ref{prop:C13cohomology} and
Proposition \ref{prop:C23cohomology}, respectively.

Proposition \ref{lens_bundles} is a special case of a well-known general result
\cite[Proposition 7.6.7, p.253]{BGbook},
but we need the details to nail down the extra structures we need.
\begin{proposition}
  \label{lens_bundles}
  The projection $p_0 \colon M^3_g(n)\star_{\ell_1, \ell_2} S^3_{\bw} \ra \Sigma_g$ makes the join construction
  $M^3_g(n) \star_{\ell_1, \ell_2} S^3_{\bw}$
  into the total space of a smooth bundle over the surface
  $\Sigma_g$
  with fiber the three-dimensional lens space
  $L(\ell_2; w_1, w_2)$.
\end{proposition}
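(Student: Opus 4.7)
The plan is to read the bundle structure directly from the quotient definition of the join. Starting from
$M^3_g(n)\star_{\ell_1,\ell_2}S^3_{\bw}=(M^3_g(n){\times}S^3_{\bw})/S^1(\ell_1,-\ell_2,\bw)$,
note that the action (\ref{joinact}) touches only the fiber coordinate $u\in S^1$ of $M^3_g(n)$ and the $S^3_{\bw}$ factor; it is trivial on $\Sigma_g$. Hence the composition $M^3_g(n){\times}S^3_{\bw}\ra M^3_g(n)\ra\Sigma_g$ descends to a smooth map $p_0$ from the quotient, as already noted in Definition \ref{join}. What remains is to exhibit local triviality over $\Sigma_g$ with fiber $L(\ell_2;w_1,w_2)$.

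For this, I would cover $\Sigma_g$ by open sets $\{U_\alpha\}$ over which $M^3_g(n)$ trivializes as $U_\alpha{\times}S^1$, with $S^1$-valued transition maps $\gamma_{\alpha\beta}\colon U_\alpha\cap U_\beta\ra S^1$. Over $U_\alpha$ the join is the quotient of $U_\alpha{\times}S^1{\times}S^3_{\bw}$ by the $S^1$-action
$\theta\cdot(x,u;z_1,z_2)=(x,e^{-i\ell_2\theta}u;e^{iw_1\theta}z_1,e^{iw_2\theta}z_2)$,
which is free by the smoothness hypothesis $\gcd(\ell_2,w_1w_2\ell_1)=1$. Using the slice $\{u=1\}\subset S^1$ I can normalize each orbit: given $(u,z_1,z_2)$, a $\theta$ uniquely determined modulo $2\pi/\ell_2$ satisfies $e^{-i\ell_2\theta}u=1$, and the residual $\bZ_{\ell_2}\subset S^1$ stabilizing the slice acts on $S^3_{\bw}$ by $(z_1,z_2)\mapsto(e^{2\pi iw_1/\ell_2}z_1,e^{2\pi iw_2/\ell_2}z_2)$. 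By definition $S^3_{\bw}/\bZ_{\ell_2}=L(\ell_2;w_1,w_2)$, so the join restricts over $U_\alpha$ to $U_\alpha{\times}L(\ell_2;w_1,w_2)$.

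To complete the argument I would compute the transition functions. On $U_\alpha\cap U_\beta$ the change of trivialization $u\mapsto\gamma_{\alpha\beta}(x)u$ lifts trivially in the $S^3_{\bw}$ coordinates. Pushing the shifted slice back to $\{u=1\}$ by the $S^1$-action produces the fiberwise diffeomorphism
$[z_1,z_2]\mapsto[e^{iw_1\theta_{\alpha\beta}(x)}z_1,e^{iw_2\theta_{\alpha\beta}(x)}z_2]$
of $L(\ell_2;w_1,w_2)$, where $\ell_2\theta_{\alpha\beta}(x)$ is any argument of $\gamma_{\alpha\beta}(x)$. The quantity $\theta_{\alpha\beta}(x)$ is defined only modulo $2\pi/\ell_2$, but precisely this ambiguity was quotiented out in forming $L(\ell_2;w_1,w_2)$, so the transition is a well-defined smooth diffeomorphism of the fiber.

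The main obstacle---and where I would take the most care---is this last bookkeeping step: verifying that the multivaluedness of $\theta_{\alpha\beta}(x)$ causes no trouble, so that the transition diffeomorphisms depend smoothly on $x$ and satisfy the cocycle identity inherited from $\{\gamma_{\alpha\beta}\}$. Once this is in hand, the standard smooth fiber bundle assembly gives $M^3_g(n)\star_{\ell_1,\ell_2}S^3_{\bw}$ the structure of a smooth $L(\ell_2;w_1,w_2)$-bundle over $\Sigma_g$ with projection $p_0$.
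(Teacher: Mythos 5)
Your proposal is correct and follows essentially the same route as the paper: trivialize $M^3_g(n)$ locally over $\Sigma_g$, use the slice $\{u=1\}$ to reduce the free $S^1$-action to the residual $C_{\ell_2}$ acting on $S^3_{\bw}$ with weights $(w_1,w_2)$, and identify the local quotient with $U\times L(\ell_2;w_1,w_2)$. Your added bookkeeping on transition maps (the $\theta_{\alpha\beta}$ ambiguity modulo $2\pi/\ell_2$ being absorbed by the deck group of $S^3\ra L(\ell_2;w_1,w_2)$) is sound and slightly more explicit than the paper, which stops at the local trivializations.
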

\begin{proof}
   If $U \subset \Sigma_g$ is an open subset of $\Sigma_g$ over which 
$p \colon M^3_g(n) \ra \Sigma_g$ admits a trivialization
$U {\times} S^1 \ra p^{-1}(U)$, then we can write the action in local coordinates as
\begin{equation*}
  z\cdot \bigl( (u, z' ), (z_1, z_2) \bigr) = \bigl( (u, z'z^{-\ell_2}), (z^{w_1}z_1, z^{w_2}z_2)\bigr) .
\end{equation*}
Now let $C_{\ell_2}$ denote the subgroup of $S^1$ consisting of the $\ell_2$-roots of unity, with preferred
generator $\zeta = \exp( 2i \pi/ \ell_2)$. Let $k \colon U \ra U{\times}S^1$ be given by $k(u) = (u, 1)$.
The diagram
\begin{equation*}
  \xy \UCMT \xymatrix{
S^1 \times \bigl( U{\times}S^1) \times S^3_{\bw} \bigr) \ar[r] & (U{\times}S^1) \times S^3_{\bw}
\\
C_{\ell_2} \times \bigl( U \times S^3_{\bw} \bigr) \ar[r] \ar_{\id \times k}[u] & U \times S^3_{\bw} \ar_{k} [u]
}
\endxy
\end{equation*}
is equivariant and induces  isomorphisms
\begin{equation*}
  U \times L(\ell_2, w_1, w_2) \iso \bigl( U \times S^3_{\bw} \bigr) / C_{\ell_2} 
          \lra \bigl( \bigl( U{\times}S^1) \times S^3_{\bw} \bigr)/S^1
             \lra \bigl( p^{-1}(U) \times S^3_{\bw} \bigr)/S^1,
\end{equation*}
so that we see the join 
$M = M^3_g(n) \star_{\ell_1, \ell_2} S^3_{\bw}$
is an $L(\ell_2; w_1, w_2)$-bundle over $\Sigma_g$.
\end{proof}
\begin{remark}
  From the bundle structure and the long exact homotopy sequence of a fibration,
  it follows that there is a short exact sequence
  \begin{equation*}
    0 \lra C_{\ell_2} \lra \pi_1(M^3_g(n) \star_{\ell_1, \ell_2} S^3_{\bw}) \lra \pi_1(\Sigma_g) \lra 0.
  \end{equation*}
  For our approach to the classification of these manifolds up to homeomorphism and diffeomorphism,
  it is important to understand this extension in detail.
  We take up this issue in Section \ref{grouptheory}.
\end{remark}

In order to split
$M = M^3_g(n) \star_{\ell_1, \ell_2} S^3_{\bw}$ into subbundles, 
decompose $S^3_{\bw}$ into a union of two solid tori, $S^3_{\bw} = \widetilde{T}_1 \cup \widetilde{T}_2$, where
\begin{equation*}
  \widetilde{T}_1 = \{(z_1, z_2) \in S^3 \; | \; \abs{z_1}^2 \geq \abs{z_2}^2 \}
\quad \text{and} \quad
 \widetilde{T}_2 = \{(z_1, z_2) \in S^3 \; | \; \abs{z_1}^2 \leq \abs{z_2}^2 \}.
\end{equation*}
From the formula \eqref{eq:circleaction} for the action of $S^1$ on $S^3_{\bw}$,
 the action preserves the decomposition
$S^3_{\bw} = \widetilde{T}_1 \cup \widetilde{T}_2$. 
Note also that the circles
$S^1{\times}\{0\} \subset \widetilde{T}_1$ and $\{0\}{\times}S^1 \subset \widetilde{T}_2$
are preserved by the $S^1$-action.
Then we have an equivariant decomposition
$ M^3_g(n){\times}S^3_{\bw} = (M^3_g(n){\times}\widetilde{T}_1) \cup (M^3_g(n){\times}\widetilde{T}_2)$
and, upon passing to quotients, a decomposition
\begin{equation*}
  M^3_g(n){\star}_{\ell_1,\ell_2}S^3_{\bw} = (M^3_g(n){\times}S^3_{\bw})/S^1
  = (M^3_g(n){\times}\widetilde{T}_1)/S^1 \cup (M^3_g(n){\times}\widetilde{T}_2)/S^1
\end{equation*}
With these facts we can produce a conceptual decomposition of
$M^3_g(n) \star_{\ell_1, \ell_2} S^3_{\bw}$.
\begin{definition}
  We define
  \begin{equation*}
    B_1  \mathrel{\mathop :}= (M^3_g(n){\times}\widetilde{T}_1)/S^1, \; p_1\mathrel{\mathop :}=p_0|B_1\colon B_1\ra \Sigma_g
    \quad \text{and} \quad
    B_2 \mathrel{\mathop :}=(M^3_g(n){\times}\widetilde{T}_2)/S^1\; , \; p_2\mathrel{\mathop :}=p_0|B_2\colon B_2\ra \Sigma_g
  \end{equation*}
  Also define submanifolds
  \begin{equation*}
    C_1^3  \mathrel{\mathop :}= (M^3_g(n){\times}(S^1{\times}\{0\}))/S^1
    \quad \text{and} \quad
    C_2^3 \mathrel{\mathop :}=(M^3_g(n){\times}\{0\}{\times}S^1)/S^1
  \end{equation*}
   \end{definition}
   \begin{proposition} \label{bundle_structures}
   The projection $p_1 \colon B_1 \ra \Sigma_g$ makes $B_1$ into the total space of
       an $S^1{\times}D^2$ bundle over the surface $\Sigma_g$. Similarly, the projection
       $p_2 \colon B_2 \ra \Sigma_g$ makes $B_2$ into a $D^2{\times}S^1$ bundle over $\Sigma_g$.

       The restrictions of $p_1$, $p_2$, respectively, to the subspaces $C_1{\subset}B_1$ and $C_2{\subset}B_2$
       are projections of bundles over $\Sigma_g$  with fibers $S^1{\times}\{0\}$ and $\{0\}{\times}S^1$, respectively.

       The $S^1$-equivariant retraction
      $\tilde{r}_1 \colon \widetilde{T}_1 \ra (S^1{\times}\{0\})$, $\tilde{r}_1(z_1,z_2) = (z_1\cdot (1 - \abs{z_2}^2)^{-1/2}, 0)$,
      induces a  projection $p_1' \colon B_1 \ra C_1$ making $B_1$ into the total space of $D^2$-bundle
      over $C_1$.
      Similarly, the retraction
      $\tilde{r}_2 \colon \widetilde{T}_2 \ra (\{0\}{\times} S^1)$, $\tilde{r}_2(z_1,z_2) = (0, z_2\cdot (1 - \abs{z_1}^2)^{-1/2})$,
      induces a projection $p_2' \colon B_2 \ra C_2$ making $B_2$ into the total
       space of a $D^2$-bundle over $C_2$.
      \end{proposition}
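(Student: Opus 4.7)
The plan is to parallel the argument for Proposition \ref{lens_bundles}, now working with the $S^1$-invariant pieces $\widetilde{T}_1, \widetilde{T}_2 \subset S^3_{\bw}$ and their invariant core circles. Fix a trivialization $p^{-1}(U) \iso U {\times} S^1$ of $p \colon M^3_g(n) \ra \Sigma_g$. Restricting the local $S^1$-action to $(U{\times}S^1){\times}\widetilde{T}_1$ and choosing the slice $U {\times} \{1\} {\times} \widetilde{T}_1$, whose stabilizer is $C_{\ell_2}$, yields
\begin{equation*}
  p_1^{-1}(U) \iso U \times (\widetilde{T}_1 / C_{\ell_2}),
\end{equation*}
with $C_{\ell_2}$ acting on $\widetilde{T}_1$ by restriction of the weighted action \eqref{eq:circleaction}. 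The same reasoning produces local trivializations for $B_2$ over $\widetilde{T}_2 / C_{\ell_2}$ and for the $C_i^3$ over the quotients of the core circles $S^1{\times}\{0\}$ and $\{0\}{\times}S^1$.

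The next step is to identify these quotients. Parameterize $\widetilde{T}_1$ as $S^1 {\times} D^2$ via $(w, \zeta) \mapsto (w (1 - |\zeta|^2)^{1/2}, \zeta)$ with $D^2$ the disc of radius $1/\sqrt 2$. In these coordinates the weighted action reads $(w, \zeta) \mapsto (z^{w_1} w, z^{w_2} \zeta)$, so the generator $\exp(2\pi i/\ell_2)$ of $C_{\ell_2}$ rotates each factor by a root of unity. The smoothness hypothesis $\gcd(\ell_2, w_1 w_2 \ell_1) = 1$ in Definition \ref{join} forces $\gcd(\ell_2, w_1) = \gcd(\ell_2, w_2) = 1$, so this action is free on $S^1 {\times} D^2$ and restricts to a free action on the core $S^1 {\times} \{0\}$. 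The quotient $(S^1 {\times} D^2)/C_{\ell_2}$ fibers over $S^1/C_{\ell_2} \iso S^1$ with fiber $D^2$, and since the monodromy is a rotation of $D^2$, which lies in the identity component of $\mathrm{Homeo}(D^2)$, this $D^2$-bundle is trivial, giving $\widetilde{T}_1 / C_{\ell_2} \iso S^1 {\times} D^2$ with the new core identified with the image of the old one. The same argument handles $\widetilde{T}_2$, establishing the $S^1{\times}D^2$ and $D^2{\times}S^1$ bundle structures on $B_1$ and $B_2$ and the circle-bundle structures $p_i|_{C_i^3} \colon C_i^3 \ra \Sigma_g$.

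Finally, for the $D^2$-bundle structures $p_i' \colon B_i \ra C_i^3$, I would verify that the retraction $\tilde r_1(z_1, z_2) = (z_1 (1 - |z_2|^2)^{-1/2}, 0)$ is $S^1$-equivariant for the weighted action. This is immediate from $|z^{w_2} z_2| = |z_2|$, since then
\begin{equation*}
  \tilde r_1(z^{w_1}z_1, z^{w_2}z_2) = \bigl( z^{w_1}z_1 (1 - |z_2|^2)^{-1/2}, 0 \bigr) = z \cdot \tilde r_1(z_1, z_2).
\end{equation*}
Equivariance allows $\id_{M^3_g(n)} \times \tilde r_1$ to descend to a well-defined map $p_1' \colon B_1 \ra C_1^3$, and in the local product trivialization $p_1^{-1}(U) \iso U {\times} S^1 {\times} D^2$ constructed above, $p_1'$ becomes projection to the first two factors, confirming the $D^2$-bundle structure. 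The argument for $p_2'$ is identical.

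The main obstacle is the topological identification $(S^1{\times}D^2)/C_{\ell_2} \iso S^1 {\times} D^2$: once one has this, everything else is bookkeeping on local trivializations. The two points to check carefully are the coprimality $\gcd(\ell_2, w_1) = 1$, which is needed for the action to be free, and the connectedness of $SO(2)$, which makes the resulting $D^2$-bundle over $S^1$ trivial rather than twisted.
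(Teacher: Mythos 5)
Your proof is correct, but it takes a genuinely different route from the paper's. The paper proves this proposition by writing down explicit equivariant maps $\tilde{h}_1$, $\tilde{h}_{12}$, $\tilde{h}_2$ (formulas \eqref{eq:hformulas}), built from B\'ezout integers $r,s$ with $r\ell_2 - sw_1w_2 = 1$, checking directly that they are constant on $S^1$-orbits and compatible with the Heegaard diagram \eqref{diag:Heegard1} and with the retractions $r_1, r_2$; the authors deliberately postpone the proof until this coordinatization is available because the explicit trivializations are reused verbatim to extract the gluing data of Proposition \ref{prop:gluingB1and2} and everything downstream (fundamental groups, Mayer--Vietoris computations, Euler classes). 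Your argument instead reduces, exactly as in Proposition \ref{lens_bundles}, to $p_1^{-1}(U) \iso U \times (\widetilde{T}_1/C_{\ell_2})$ via the slice $U\times\{1\}\times\widetilde{T}_1$, and then identifies $\widetilde{T}_1/C_{\ell_2} \iso S^1{\times}D^2$ abstractly: the coprimality $\gcd(\ell_2,w_1)=\gcd(\ell_2,w_2)=1$ (forced by $\gcd(\ell_2,w_1w_2\ell_1)=1$) makes the $C_{\ell_2}$-action free and makes the quotient a $D^2$-bundle over $S^1/C_{\ell_2}\iso S^1$ with rotational monodromy, hence trivial by connectedness of $SO(2)$. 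This is a sound and shorter proof of the statement as literally stated; what it does not buy is the specific, formula-level identification that the paper needs later, since your trivialization of the disc bundle over the circle depends on a choice of path in $SO(2)$ and so is non-canonical. One small point to make explicit: in your last paragraph you should note that the trivialization of $S^1\times_{C_{\ell_2}}D^2$ can be chosen fiberwise through rotations, hence preserving the zero section and the radial coordinate; that is what guarantees the induced retraction corresponds to projection onto the first two factors in your local chart, so that $p_1'$ really is a $D^2$-bundle projection in those coordinates.
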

These facts may be proved following the method of the proof of Proposition \ref{lens_bundles} for parts
1 and 2.  For part 3 equivariant retractions
$r_1 \colon \widetilde{T}_1 \ra (S^1{\times}\{0\})$, $r_1(z_1,z_2) = (z_1\cdot (1 - \abs{z_2}^2)^{-1/2}, 0)$,
and
$r_2 \colon \widetilde{T}_2 \ra (\{0\}{\times} S^1)$,
$r_2(z_1,z_2) = (0, z_2\cdot (1 - \abs{z_1}^2)^{-1/2})$,
are required.
These definitions decompose the $L(\ell_2; w_1,w_2)$-bundle
$p \colon M^3_g(n) {\star}_{\ell_1, \ell_2} S^3_{\bw} \ra \Sigma_g$ into subundles
$B_1 \ra \Sigma_g$ and $B_2 \ra \Sigma_g$ with fibers $S^1{\times}D^2$ and $D^2{\times}S^1$,
respectively.  These are glued together along $B_1{\cap}B_2$, which is an $S^1{\times}S^1$-bundle
over $\Sigma_g$.
However, the computations we make require explicit coordinatization of these structures, so we postpone the
proof of Proposition \ref{bundle_structures} until the coordinatization is complete.

For the first step in making the bundle structures explicit,
we decompose the $\ell_2$-fold covering 
\begin{equation*}
S^3 \ra L(\ell_2;w_1,w_2).  
\end{equation*}
Referring to subsection
\ref{joinsec} our blanket assumptions on parameters imply that
$\ell_2$ and $w_1w_2$ are relatively prime.  Throughout our computations we will need auxiliary parameters $r$ and $s$ reflecting
this assumption, so we introduce them here.
\begin{equation}
  \label{eq:relprime1}
  \text{Choose integers $r$ and $s$ such that $r\ell_2 {-} s w_1 w_2 = 1$.}
\end{equation}
To make concrete a standard Heegard splitting of $L(\ell_2; w_1,w_2)$, consider the diagram
\begin{equation}
  \label{diag:Heegard1}
  \xy \UCMT \xymatrix{
(\widetilde{T}_1,S^1{\times}\{0\}) \ar_{f_1}[d] 
             & \ar[l] (\widetilde{T}_1 {\cap} \widetilde{T}_2, \emptyset) \ar[r] \ar_{f_{12}}[d]  
                  & (\widetilde{T}_2,\{0\}{\times}S^1) \ar_{f_2}[d]
\\
(S^1 {\times} D^2, S^1{\times}\{0\})
& \ar_(0.40)i[l]  (S^1 {\times} S^1, \emptyset) \ar^(0.40){g}[r]
                  &  (D^2 {\times} S^1,\{0\}{\times}S^1). }
\endxy
\end{equation}
The maps in the upper row are the inclusions, and define
\begin{align}
  \label{eqs:buildingblocksa}
  f_1(z_1, z_2) &= \bigl((z_1/\abs{z_1})^{\ell_2}, (z_1/\abs{z_1})^{sw_2^2}(z_2/\abs{z_1}) \bigr), 
& 
  f_2(z_1, z_2) &= \bigl( (z_2/\abs{z_2})^{sw_1^2}(z_1/\abs{z_2}), (z_2/\abs{z_2})^{\ell_2} \bigr),
\\ \label{eqs:buildingblocksb}
  f_{12}(z_1,z_2) &= \bigl((z_1/\abs{z_1})^{\ell_2}, (z_1/\abs{z_1})^{sw_2^2}(z_2/\abs{z_1}) \bigr),
&  
  g(x_1, x_2) &= (x_1^{r(1-sw_1w_2)} x_2^{sw_1^2}, x_1^{-sw_2^2} x_2^{\ell_2}), 
\end{align}
and let $i \colon S^1 {\times} S^1 \ra S^1 {\times} D^2$ be the inclusion.
We will use these maps to analyse the splitting of
$M^3_g(n) \star_{\ell_1, \ell_2} S^3_{\bw} $ into $B_1{\cup}B_2$.
\begin{proposition}
  \label{coordinate_formulas}
  With these definitions, 
\begin{enumerate}
\item the maps $f_1$, $f_2$, and $f_{12}$ are $\ell_2$ to $1$ and are compatible with the equivariant
decomposition of $S^3$;
\item in the righthand square of diagram \eqref{diag:Heegard1}, $g \com f_{12} = f_2$.
\end{enumerate}  
\end{proposition}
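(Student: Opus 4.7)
The plan is to handle the two parts separately, both via direct calculation with the explicit formulas \eqref{eqs:buildingblocksa} and \eqref{eqs:buildingblocksb}.

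For part (1), the unifying idea is to verify that each of $f_1$, $f_2$, $f_{12}$ is invariant under the $C_{\ell_2}$-subgroup of the $S^1$-action on $S^3_{\bw}$ defined in \eqref{eq:circleaction}, with generator $\zeta = \exp(2\pi i/\ell_2)$ sending $(z_1, z_2)$ to $(\zeta^{w_1} z_1, \zeta^{w_2} z_2)$, and then to count fibers. For $f_1$, applying $\zeta$ multiplies the first coordinate by $\zeta^{w_1 \ell_2} = 1$ and the second by $\zeta^{s w_1 w_2^2 + w_2} = \zeta^{w_2(s w_1 w_2 + 1)}$; the Bezout relation \eqref{eq:relprime1} rewrites the exponent as $w_2 r \ell_2$, which is divisible by $\ell_2$. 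The formula for $f_{12}$ is identical to that of $f_1$, so the same calculation applies, and the analogous computation with the roles of $w_1$ and $w_2$ swapped handles $f_2$. Because $\gcd(\ell_2, w_1 w_2) = 1$ the $C_{\ell_2}$-action is free, so each orbit has exactly $\ell_2$ points, and it suffices to show each map is injective on $C_{\ell_2}$-orbits and surjective onto its stated codomain. For $f_1$ this is explicit: given $(u, v) \in S^1 \times D^2$, the equation $(z_1/\abs{z_1})^{\ell_2} = u$ has $\ell_2$ solutions for $z_1/\abs{z_1}$, after which $z_2/\abs{z_1}$ is recovered from the second coordinate, and then $\abs{z_1}$ and $\abs{z_2}$ are determined by $\abs{z_1}^2 + \abs{z_2}^2 = 1$ together with $\abs{z_1} \geq \abs{z_2}$. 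The arguments for $f_2$ and $f_{12}$ are entirely analogous.

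For part (2), the plan is direct substitution and simplification. Writing $(x_1, x_2) = f_{12}(z_1, z_2)$ with $\alpha = z_1/\abs{z_1}$ and $\beta = z_2/\abs{z_1}$, the first coordinate of $g(x_1, x_2)$ becomes $\alpha^{r\ell_2(1 - s w_1 w_2) + s^2 w_1^2 w_2^2} \beta^{s w_1^2}$. Using $r\ell_2 = 1 + s w_1 w_2$ from \eqref{eq:relprime1}, the exponent of $\alpha$ collapses via $(1 + s w_1 w_2)(1 - s w_1 w_2) + s^2 w_1^2 w_2^2 = 1$, leaving $\alpha \beta^{s w_1^2}$. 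The second coordinate reduces to $\beta^{\ell_2}$ because the two contributions to the exponent of $\alpha$ cancel. Finally, on $\widetilde{T}_1 \cap \widetilde{T}_2$ one has $\abs{z_1} = \abs{z_2}$, so $\alpha = z_1/\abs{z_2}$ and $\beta = z_2/\abs{z_2}$, and the expressions produced match $f_2(z_1, z_2)$ coordinatewise.

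The main obstacle is purely bookkeeping: the exponents in \eqref{eqs:buildingblocksa} and \eqref{eqs:buildingblocksb} are designed so that the Bezout identity $r\ell_2 - s w_1 w_2 = 1$ produces every necessary cancellation, and once one keeps track of where to apply it, no further conceptual input is needed.
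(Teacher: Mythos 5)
Your proposal is correct and follows essentially the same route as the paper: invariance under the $C_{\ell_2}$-action via the exponent identity $sw_1w_2^2+w_2=r\ell_2 w_2$ from \eqref{eq:relprime1}, a fiber count using the norm constraint, and the Bezout cancellation $r\ell_2(1-sw_1w_2)+s^2w_1^2w_2^2=1$ for $g\com f_{12}=f_2$. The only cosmetic differences are that the paper establishes the $\ell_2$-to-$1$ claim by showing two preimages of a common value lie in one $C_{\ell_2}$-orbit rather than by directly counting the fiber, and it derives the exponents of $g$ from the requirement $g\com f_{12}=f_2$ instead of verifying the stated formula.
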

\begin{proof}
  Checking that $f_1$ is constant on $C_{\ell_2}$ orbits:
  \begin{multline*}
    f_1( \zeta^{w_1}z_1, \zeta^{w_2}z_2 )
    = \bigl( (z_1/\abs{z_1})^{\ell_2}, (\zeta^{w_1})^{sw_2^2} \zeta^{w_2} (z_1/\abs{z_1})^{s w_2^2} (z_2/\abs{z_1}) \bigr)
    \\
    = \bigl( (z_1/\abs{z_1})^{\ell_2}, \zeta^{s w_1w_2^2+w_2} (z_1/\abs{z_1})^{s w_2^2} (z_2/\abs{z_1}) \bigr)
    = \bigl( (z_1/\abs{z_1})^{\ell_2}, (z_1/\abs{z_1})^{sw_2^2} (z_2/\abs{z_1}) \bigr)
    = f_1( z_1, z_2),
  \end{multline*}
  since $s w_1 w_2^2{+} w_2 = r \ell_2 w_2$ by \eqref{eq:relprime1}.

  On the other hand, if $f_1(z_1, z_2 ) = f_1( z_1', z_2')$, then
  $(z_1/\abs{z_1})^{\ell_2} = (z_1'/\abs{z_1'})^{\ell_2}$ implies
  there is an $\ell_2$-root of unity $\zeta_1^{w_1}$ such that
  $(z_1/\abs{z_1}) = \zeta_1^{-w_1}(z_1'/\abs{z_1'})$.
  Applying the constraint
  \begin{equation*}
    \frac{1}{\abs{z_1}^2} = \frac{\abs{z_2}^2}{\abs{z_1}^2} + 1 = \frac{\abs{z_2'}^2}{\abs{z_1'}^2} + 1 =  \frac{1}{\abs{z_1'}^2},
  \end{equation*}
we deduce $z_1 = \zeta_1^{-w_1}z_1'$.

  Equating expressions for second coordinates,
  \begin{align*}
    (z_1/\abs{z_1})^{sw_2^2} \cdot (z_2/\abs{z_1}) &= (z_1'/\abs{z_1'})^{sw_2^2} \cdot (z_2'/\abs{z_1'})
    \\
    \zeta_1^{-s w_1 w_2^2} (z_1'/\abs{z_1'})^{sw_2^2} \cdot (z_2/\abs{z_1}) &=  (z_1'/\abs{z_1'})^{sw_2^2} \cdot (z_2'/\abs{z_1'})
    \\
    \zeta_1^{w_2} z_2 &= z_2',
  \end{align*}
  since $r \ell_2 w_2 - s w_1 w_2^2 = w_2$, again by \eqref{eq:relprime1}.
  It follows that $f_1(z_1, z_2) = f_1(z_1', z_2')$ implies
  $(z_1, z_2)$ and $(z_1', z_2')$ are in the same $C_{\ell_2}$-orbit.

  Here is how to construct the formula for $g$.   Start with $g(x_1, x_2) = (x_1^ax_2^b, x_1^cx_2^d)$
  and determine the unknowns $a$, $b$, $c$, $d$ by expanding the requirement $g \com f_{12} = f_2$.
  Looking at the first coordinates, we require
  \begin{equation*}
    \Bigl( \frac{z_1}{\abs{z_1}} \Bigr)^{\ell_2 a} \cdot \Bigl( \frac{z_1}{\abs{z_1}} \Bigr)^{sw_2^2 b}
    \cdot \Bigl( \frac{z_2}{\abs{z_1}} \Bigr)^b
    =
    \frac{z_1}{\abs{z_2}} \cdot \Bigl( \frac{z_2}{\abs{z_2}} \Bigr)^{sw_1^2}.
  \end{equation*}
  Comparing exponents on $z_2$, we must have $b = sw_1^2$. Then the requirement for the exponent on $z_1$ is
  $ \ell_2 a + s^2 w_1^2 w_2^2 = 1$,
  and this may be satisfied by taking
  \begin{equation}
    \label{eq:avalues}
   a = r^2 \ell_2 - 2rsw_1w_2 = r\bigl( (r\ell_2 - sw_1w_2) - sw_1w_2 \bigr) = r\bigl(1 - sw_1w_2 \bigr), 
  \end{equation}
  since $(r\ell_2 - sw_1w_2)^2 = 1$. Recalling that $\abs{z_1} {=} \abs{z_2}$ for points in $\widetilde{T}_1{\cap}\widetilde{T}_2$,
  it is routine to verify that the expressions for the denominators match.

  Equating expressions for the second coordinates,
  \begin{equation*}
    \Bigl( \frac{z_1}{\abs{z_1}} \Bigr)^{\ell_2 c} \cdot \Bigl(\frac{z_1}{\abs{z_1}}\Bigr)^{sw_2^2 d}
    \cdot \Bigl( \frac{z_2}{\abs{z_1}} \Bigr)^{d}
    =
    \Bigl( \frac{z_2}{\abs{z_2}} \Bigr)^{\ell_2}    
  \end{equation*}
  Matching the exponents on $z_2$ gives $d{=}\ell_2$ and on $z_1$ gives $\ell_2 c {+} s\ell_2w_2^2 {=}0$,
  which is satisfied by $c{=}-sw_2^2$. Recalling that $\abs{z_1} {=} \abs{z_2}$ for points of
  $\widetilde{T}_1{\cap}\widetilde{T_2}$, it is easy to verify these choices give the correct denominators.
\end{proof}
\begin{proof}[Proof of Proposition \ref{bundle_structures}]
  Let $U \subset \Sigma_g$ be an open subset for which there is a trivialization
  $  U{\times}S^1 \ra  p^{-1}U  $, let $i \colon C_{\ell_2} \ra S^1$ be the inclusion defined
  by fixing a root of unity $\exp( 2\pi i/\ell_2)$,
  and let $k \colon U \ra U{\times}S^1$ be defined by $k(u) = (u,1)$.
The diagrams comparing the actions
\begin{equation*}
  \xy \UCMT \xymatrix{
S^1 \times \bigl( U{\times}S^1) \times \widetilde{T}_1 \bigr) \ar[r] & (U{\times}S^1) \times \widetilde{T}_1
& 
S^1 \times \bigl( U{\times}S^1) \times \widetilde{T}_2 \bigr) \ar[r] & (U{\times}S^1) \times \widetilde{T}_2
\\
C_{\ell_2} \times \bigl( U \times \widetilde{T}_1 \bigr) \ar[r] \ar_{i \times k \times \id}[u] & U \times \widetilde{T}_1 \ar_{k \times \id} [u]
&
C_{\ell_2} \times \bigl( U \times \widetilde{T}_2 \bigr) \ar[r] \ar_{i \times k \times \id}[u] & U \times \widetilde{T}_2 \ar_{k \times \id} [u]
}
\endxy
\end{equation*}
and
\begin{equation*}
  \xy \UCMT \xymatrix{
    S^1 \times \bigl( U{\times}S^1) \times \widetilde{T}_1{\cap}\widetilde{T}_2 \bigr) \ar[r]
                            & (U{\times}S^1) \times \widetilde{T}_1{\cap}\widetilde{T}_2
\\
C_{\ell_2} \times \bigl( U \times \widetilde{T}_1{\cap}\widetilde{T}_2 \bigr) \ar[r] \ar_{i \times k \times \id}[u]
                            & U \times \widetilde{T}_1{\cap}\widetilde{T}_2 \ar_{k\times \id} [u]
}
\endxy
\end{equation*}
commute.
Merging these diagrams with the Heegard diagram \eqref{diag:Heegard1}
and passing to quotients yields the following compatibility diagram:
\begin{equation}
  \label{eq:compatibilitydiagram}
  \xy \UCMT \xymatrix@R=5ex{
    U \times (S^1{\times}\{0\}) \ar@<0.5ex>[d]
             & \ar_{f_1|}[l] (U  \times S^1{\times}\{0\}/C_{\ell_2} \ar^(0.45){\iso}[r] \ar@<0.5ex>[d]
    & \bigl((U{\times}S^1) \times (S^1{\times}\{0\})\bigr)S^1 \ar@<0.5ex>[d]
    \\
    U \times (S^1{\times}D^2) \ar@<0.5ex>^{\id \times r_1}[u]
             & \ar_{f_1}^{\iso}[l] (U \times \widetilde{T}_1)/C_{\ell_2} \ar[r]^{\iso} \ar@<0.5ex>^{\id \times \tilde{r}_1}[u]
    & (U{\times}S^1) \times \widetilde{T}_1)/S^1 \ar@<0.5ex>^{\id \times \tilde{r}_1}[u]
\\
U \times (S^1{\times}S^1) \ar[u]_{\id \times i} \ar[d]^{\id \times g}
 & \ar_{f_{12}}^{\iso}[l] (U \times \widetilde{T}_1{\cap}\widetilde{T}_2)/C_{\ell_2} \ar[r]^(0.45){\iso} \ar[u] \ar[d] & 
\bigl((U{\times}S^1) \times (\widetilde{T}_1{\cap}\widetilde{T}_2)\bigr)/S^1 \ar[u] \ar[d]
\\
U \times( D^2{\times}S^1 ) \ar@<0,5ex>^{\id \times r_2}[d]
& \ar_{f_2}^{\iso}[l] \ar@<0,5ex>^{\id \times \tilde{r}_2}[d] (U \times \widetilde{T}_2)/C_{\ell_2} \ar[r]^{\iso}
& (U{\times}S^1) \times \widetilde{T}_2)/S^1 \ar@<0,5ex>^{\id \times \tilde{r}_2}[d]
\\
  U \times (\{0\}{\times}S^1) \ar@<0.5ex>[u] & \ar_{f_2|}[l] (U  \times \{0\}{\times}S^1/C_{\ell_2} \ar^(0.45){\iso}[r] \ar@<0.5ex>[u]
    & \bigl((U{\times}S^1) \times (\{0\}{\times}S^1)\bigr)S^1 \ar@<0.5ex>[u]
}
\endxy
\end{equation}
Direct isomorphisms $h_1$, $h_{12}$, and $h_2$
from the spaces in the righthand column to the spaces in the lefthand column
are provided by the functions
\begin{align} \label{eq:hformulas}
  \begin{split}
  \tilde{h}_1
  &\colon ((U{\times}S^1) \times \widetilde{T}_1) \ra U \times (S^1 {\times} D^2),\\
   & \tilde{h}_1( (u,z) , (z_1, z_2) )
  = \bigl( u, \bigl(z^{w_1}(z_1/\abs{z_1})^{\ell_2}, z^{rw_2}(z_1/\abs{z_1})^{sw_2^2}(z_2/\abs{z_1}) \bigr),
  \\
  \tilde{h}_{12}
  &\colon (U{\times}S^1) \times (\widetilde{T}_1{\cap}\widetilde{T}_2) \ra U \times (S^1 {\times} S^1),\\
   & \tilde{h}_{12}( (u,z) , (z_1, z_2) )
  = \bigl( u, \bigl(z^{w_1}(z_1/\abs{z_1})^{\ell_2}, z^{rw_2}(z_1/\abs{z_1})^{sw_2^2}(z_2/\abs{z_1}) \bigr),
  \\
  \tilde{h}_2
  &\colon ((U{\times}S^1) \times \widetilde{T}_2) \ra U \times (D^2 {\times} S^1),\\
   & \tilde{h}_2( (u,z) , (z_1, z_2) )
  = \bigl( u, \bigl( z^{rw_1}(z_2/\abs{z_2})^{sw_1^2}(z_1/\abs{z_2}), z^{w_2}(z_2/\abs{z_2})^{\ell_2} \bigr),   
  \end{split}
 \end{align}
 and passage to quotients in the domains. Note that the expressions for $\tilde{h}_1$ and $\tilde{h}_{12}$ are
 the same, but the domains and targets are different.

We check that $\tilde{h}_1$ and $\tilde{h}_2$ are constant on $S^1$-orbits.
\begin{multline*}
  \tilde{h}_1( (u,z\zeta^{-\ell_2}) , (\zeta^{w_1}z_1, \zeta^{w_2}z_2) )
  \\
  =
  (u, \bigl(\zeta^{-\ell_2w_1}z^{w_1}\zeta^{w_1\ell_2}(z_1/\abs{z_1})^{\ell_2}
  , \zeta^{-\ell_2rw_2}z^{rw_2}\zeta^{w_1sw_2^2}(z_1/\abs{z_1})^{sw_2^2}\zeta^{w_2}(z_2/\abs{z_1}) \bigr)
  \\
  =(u, \bigl( (z^{w_1}(z_1/\abs{z_1})^{\ell_2}, z^{rw_2}(z_1/\abs{z_1})^{sw_2^2}(z_2/\abs{z_1}) \bigr)
  =
  \tilde{h}_1(u,z), (z_1, z_2)),
\end{multline*}
because, for the third coordinate, the exponent on $\zeta$ is
$-\ell_2rw_2 {+} sw_1w_2^2 {+} w_2 = -w_2( r\ell_2 - sw_1w_2 - 1) = 0$.
\begin{multline*}
   \tilde{h}_2( (u,z\zeta^{-\ell_2}) , (\zeta^{w_1}z_1, \zeta^{w_2}z_2) )
  \\
  =
  (u, \bigl( \zeta^{-\ell_2rw_1}z^{rw_1}\zeta^{sw_1^2w_2}(z_2/\abs{z_2})^{sw_1^2}\zeta^{w_1}(z_1/\abs{z_2}),
  \zeta^{-\ell_2w_2}z^{w_2}\zeta^{w_2\ell_2}(z_2/\abs{z_2})^{\ell_2} \bigr)
  \\
  =(u, \bigl( z^{rw_1}(z_2/\abs{z_2})^{sw_1^2}(z_1/\abs{z_2}), z^{w_2}(z_2/\abs{z_2})^{\ell_2} \bigr)
  =
  \tilde{h}_2(u,z), (z_1, z_2)),
\end{multline*}
because, for the second coordinate, the exponent on $\zeta$ is
$-\ell_2rw_1 {+} sw_1^2w_2 {+} w_1 = -w_1( r\ell_2 - sw_1w_2 - 1) = 0$.

To verify commutativity of the diagram, namely, to check that
$(\id {\times}g) \com h_{12} = h_2| \bigl(U{\times}S^1 \times (\widetilde{T}_1{\cap}\widetilde{T}_2)\bigr)$
we compute
\begin{multline*}
  (\id {\times} g ) h_{12}\bigl((u, z) , (z_1, z_2)\bigr) =
  (\id {\times} g)\bigl( u, (z^{w_1}(z_1/\abs{z_1})^{\ell_2}, z^{rw_2}(z_1/\abs{z_1})^{sw_2^2}(z_2/\abs{z_1}) \bigr)
  \\
 = \Bigl( \bigl( (z^{w_1}(z_1/\abs{z_1})^{\ell_2} \bigr)^a\cdot \bigl(z^{rw_2}(z_1/\abs{z_1})^{sw_2^2}(z_2/\abs{z_1}) \bigr)^b,
  \bigl( (z^{w_1}(z_1/\abs{z_1})^{\ell_2} \bigr)^c\cdot \bigl(z^{rw_2}(z_1/\abs{z_1})^{sw_2^2}(z_2/\abs{z_1}) \bigr)^d\Bigr),
\end{multline*}
where we start from the expression $g(x_1, x_2) = (x_1^ax_2^b, x_1^cx_2^d)$ and fill in the actual exponents below,
referring to \eqref{eqs:buildingblocksb}.
Now the task is to evaluate the exponents of $z$, $z_1$, and $z_2$ in the coordinates of this expression.
For $z$, the exponents are
\begin{align*}
  aw_1 + brw_2 &= r(1- sw_1w_2)w_1 + (sw_1^2 )rw_2 & cw_1 + drw_2 &= (-sw_2^2 )w_1 +  \ell_2r w_2
  \\
              &= rw_1{-}rsw_1^2w_2{+}rsw_1^2w_2 = rw_1  &  &=w_2( r \ell_2 - sw_1w_2) = w_2
\end{align*}
For $z_1$, refer to \eqref{eqs:buildingblocksb} and \eqref{eq:avalues}, and the exponents are
\begin{align*}
  a \ell_2 + b sw_2^2 &= (r^2 \ell_2 - 2rsw_1 w_2) \ell_2 + sw_1^2 \cdot sw_2^2 & c \ell_2 + d s w_2^2 &= -sw_2^2 \ell_2 + \ell_2 s w_2^2
  \\
  &= (r \ell_2 - sw_1w_2)^2 = 1 & =0
\end{align*}
For $z_2$, the exponents are
\begin{align*}
  b &= sw_1^2 & d &= \ell_2.
\end{align*}
Combining with the fact that $\abs{z_1} = \abs{z_2}$, the denominators are taken care of, and
\begin{multline*}
   \Bigl(u, \bigl( (z^{w_1}(z_1/\abs{z_1})^{\ell_2} \bigr)^a\cdot \bigl(z^{rw_2}(z_1/\abs{z_1})^{sw_2^2}(z_2/\abs{z_1}) \bigr)^b,
   \bigl( (z^{w_1}(z_1/\abs{z_1})^{\ell_2} \bigr)^c\cdot \bigl(z^{rw_2}(z_1/\abs{z_1})^{sw_2^2}(z_2/\abs{z_1}) \bigr)^d\Bigr)
   \\
   = \bigl(u, ( z^{rw_1}(z_2/\abs{z_2})^{sw_1^2}(z_1/\abs{z_2}), z^{w_2}(z_2/\abs{z_2})^{\ell_2}) \bigr)
   = h_2( (u,z) , (z_1, z_2))
 \end{multline*}
 for $\bigl((u,z), (z_1, z_2)\bigr) \in U{\times}S^1 \times (\widetilde{T}_1{\cap}\widetilde{T}_2)$, as needed.

 For $B_1$ and $B_2$ we also have trivializations relative to the subbundles $C_1^3$ and $C_2^3$.
 First, for $B_1$ and $C_1^3$, we have
 \begin{equation*}
   \xy \UCMT \xymatrix{
     U \times  (S^1{\times}D^2) \ar@<1ex>[d]^{\id \times r}
            & \ar[l]_{\iso} (U \times  \widetilde{T}_1)/C_{\ell_2} \ar[r]^{\iso} \ar@<1ex>[d]^{\id \times r_1}
            & \bigl((U{\times}S^1) \times  \widetilde{T}_1\bigr)/S^1  \ar@<1ex>[d]^{\id \times r_1}
            \\
    U \times  (S^1{\times}\{0\})  \ar@<1ex>[u]^{\id \times i}
            & \ar[l]_{\iso} \bigl(U \times (S^1{\times}\{0\})\bigr) /C_{\ell_2} \ar[r]^(0.45){\iso} \ar@<1ex>[u]^{\id \times i}
            & \bigl((U{\times}S^1) \times  ( S^1{\times}\{0\})\bigr)/S^1 .      \ar@<1ex>[u]^{\id \times i} 
   }
   \endxy
 \end{equation*}
 The pair $(r,i)$ in the lefthand column consists of the obvious maps;
 in the middle and on the right $r_1\colon \widetilde{T}_1 \ra S^1{\times}\{0\}$
 is given by
 $r_1 (z_1,z_2) = (z_1\cdot(1 - \abs{z_2}^2)^{-1/2}, 0)$.
 For $B_2$ relative to $C_2^3$, we have 
 \begin{equation*}
   \xy \UCMT \xymatrix{
     U \times  (D^2{\times}S^1) \ar@<1ex>[d]^{\id \times r}
            & \ar[l]_{\iso} (U \times  \widetilde{T}_2)/C_{\ell_2} \ar[r]^{\iso} \ar@<1ex>[d]^{\id \times r_2}
            & \bigl((U{\times}S^1) \times  \widetilde{T}_2\bigr)/S^1  \ar@<1ex>[d]^{\id \times r_2}
            \\
    U \times  (\{0\}{\times}S^1)  \ar@<1ex>[u]^{\id \times i}
            & \ar[l]_{\iso} \bigl(U \times (\{0\}{\times}S^1)\bigr) /C_{\ell_2} \ar[r]^(0.45){\iso} \ar@<1ex>[u]^{\id \times i}
            & \bigl((U{\times}S^1) \times  ( \{0\}{\times}S^1)\bigr)/S^1  .     \ar@<1ex>[u]^{\id \times i} 
   }
   \endxy
 \end{equation*}
 Again, the pair $(r,i)$ in the lefthand column consists of the obvious maps;
 in the middle and on the right $r_2\colon \widetilde{T}_2 \ra \{0\}{\times}S^1$
 is given by
 $r_2 (z_1,z_2) = (0, z_2\cdot(1 - \abs{z_1}^2)^{-1/2})$.

 These two diagrams prove that there are bundle projections $p_1' \colon B_1 \ra C_1^3$ and $p_2' \colon B_2 \ra C_2^3$
 with $D^2$-fibers.  This completes the proof of Proposition \ref{bundle_structures}. 
\end{proof}
We may also view $g$ as a self-map of $S^1{\times}S^1$, in which case the formula defines a diffeomorphism
whose inverse is given by $g^{-1}(x_1, x_2) = (x_1^{\ell_2}x_2^{-sw_1^2}, x_1^{sw_2^2}x_2^{r(1-sw_1w_2)})$.
 
Now we want to obtain gluing data for the bundles $p_1$, $p_2$, and $p_{12}$.
For this, decompose the surface $\Sigma_g$ in a standard way, with reference to the standard $CW$-structure.
Let $V$ denote the open two-cell complementing the one-skeleton and let $U$ be $\Sigma_g$ with a point
of $V$ removed.  Identify $V$ with the open unit disc in the complex plane,
and identify $U{\cap}V$ with the open disc and $0$ removed.
Now the bundle $p \colon M_g^3(n) \ra \Sigma_g$ restricted to $V$
is trivial, since $V$ is contractible. The bundle restricted to $U$ is also trivial, because
$U$ is homotopy equivalent to the one-skeleton.
Let $\phi_U \colon p^{-1}(U) \ra U{\times}S^1$ and
$\phi_V \colon p^{-1}(V) \ra V{\times}S^1$ be trivializations. 
Since the Euler class of $p$ is $n$,
$M_g^3(n)$ is obtained from the following gluing diagram.
\begin{equation*}
  \xy \UCMT \xymatrix@C+5ex{
    p^{-1}(U) \ar[d]_{\phi_U} & \ar[l] p^{-1}(U{\cap}V) \ar[r] \ar[d]_{\phi_U} & p^{-1}(V) \ar[d]_{\phi_V}
    \\
    U{\times}S^1 & \ar[l] (U{\cap}V) \times S^1 \ar^(.55){\phi_V\com(\phi_U)^{-1}}[r] & V{\times}S^1
  }
  \endxy
\end{equation*}
where  $\phi_V\com(\phi_U)^{-1}(v, z) = (v, (v/\abs{v})^{-n}z)$, accounting for the value of the Euler class.
It follows that one may present
\begin{equation*}
  \pi_1( M^3_g(n) ) \iso \lan \; a_i, b_i, c,\; 1 \leq i \leq g \; | \;\text{$c$ is central,} \prod_{1 \leq i \leq g} [a_i, b_i]c^n  \;= e\ran.
\end{equation*}
From the local trivializations and  gluing data for $M_g^3(n)$
we now obtain local trivializations and gluing data for the three bundles
\begin{equation*}
  p_1 \colon B_1 \ra \Sigma_g, \quad p_{12} \colon B_1{\cap}B_2 \ra \Sigma_g, \quad \text{and} \quad
   p_2 \colon B_2 \ra \Sigma_g.
 \end{equation*}
 This information will be used in section \ref{gysinsequences} to compute the cohomology of
 $B_1$, $B_2$, $B_1{\cap}B_2$ and maps relating the cohomology groups.
 \begin{proposition}
   \label{prop:gluingB1and2}
   With $U$, $V$, and $U{\cap}V {\subset} \Sigma_g$ derived from the standard $CW$-structure on $\Sigma_g$ as in the
   discussion of $M^3_g(n)$, 
   the gluing map
   $\phi_1(V)\com \phi_1(U)^{-1} \colon (U{\cap}V) \times (S^1{\times}D^2) \ra V \times (S^1{\times}D^2)$
   for $B_1$ is given by
   \begin{equation}
     \label{eq:p1gluing}
   \phi_1(V)\com \phi_1(U)^{-1}  (v, x_1, x_2) = (v , (v/\abs{v})^{-nw_1}x_1, (v/\abs{v})^{-nrw_2}x_2).
 \end{equation}
 Similarly, the gluing map
 $\phi_2(V)\com \phi_2(U)^{-1} \colon (U{\cap}V) \times (D^2{\times}S^1) \ra V \times (D^2{\times}S^1)$
 for $B_2$ is given by
 \begin{equation}
   \label{eq:p2gluing}
  \phi_2(V)\com \phi_2(U)^{-1}   (v, x_1, x_2) = (v , (v/\abs{v})^{-nrw_1}x_1, (v/\abs{v})^{-nw_2}x_2).
\end{equation}
Finally,  the gluing map
$\phi_{12}(V)\com\phi_{12}(U)^{-1} \colon (U{\cap}V) \times (S^1{\times}S^1) \ra V \times (S^1{\times}S^1)$
for $B_1{\cap}B_2$ is given by
\begin{equation}
  \label{eq:p12gluing}
    \phi_{12}(V)\com \phi_{12}(U)^{-1}  (v, x_1, x_2) = (v , (v/\abs{v})^{-nw_1}x_1, (v/\abs{v})^{-nrw_2}x_2),
  \end{equation}
  restricting $\phi_1(V)\com\phi_1(U)^{-1}$ to $(U{\cap}V) \times (S^1{\times}S^1)$ and
  \begin{equation}
    \label{eq:compareB2gluing}
    (\id {\times} g ) \com \phi_{12}(V)\com \phi_{12}(U)^{-1} = \phi_2(V)\com \phi_2(U)^{-1}.
  \end{equation}
\end{proposition}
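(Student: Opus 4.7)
The plan is to derive each of the three gluing formulas by combining two ingredients already in hand: the local trivializations $\tilde h_1$, $\tilde h_{12}$, $\tilde h_2$ of \eqref{eq:hformulas}, which express $B_1$, $B_1\cap B_2$, $B_2$ over a trivializing open set in terms of the local $S^1$ coordinate on $M^3_g(n)$; and the gluing cocycle $\phi_V\circ\phi_U^{-1}(v,z) = (v, (v/|v|)^{-n}z)$ for $M^3_g(n)$ itself. The idea is that $\phi_i(U)$ and $\phi_i(V)$ ($i=1,12,2$) are obtained by passing $\phi_U$ and $\phi_V$ through the isomorphisms induced by $\tilde h_i$, so the gluing cocycle for the associated bundle is just the image of $\phi_V\circ\phi_U^{-1}$ under the relevant $\tilde h_i$.

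First, for $B_1$, I substitute $z\mapsto (v/|v|)^{-n}z$ into the formula for $\tilde h_1$. Because $z$ appears only with exponents $w_1$ in the first $S^1$-factor and $rw_2$ in the $D^2$-factor, the substitution multiplies these coordinates by $(v/|v|)^{-nw_1}$ and $(v/|v|)^{-nrw_2}$ respectively, yielding \eqref{eq:p1gluing}. The identical argument, applied to $\tilde h_2$ where $z$ carries exponents $rw_1$ and $w_2$, produces \eqref{eq:p2gluing}. For $B_1\cap B_2$, the formula for $\tilde h_{12}$ is literally the restriction of $\tilde h_1$ to $\widetilde T_1\cap \widetilde T_2$, so \eqref{eq:p12gluing} is immediate from \eqref{eq:p1gluing}.

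The compatibility \eqref{eq:compareB2gluing} will then be a consequence of Proposition \ref{coordinate_formulas}(2), which states $g\circ f_{12} = f_2$, lifted to the local trivializations. More precisely, the middle-to-lower portion of diagram \eqref{eq:compatibilitydiagram} commutes, meaning $(\id\times g)\circ \tilde h_{12}$ agrees with $\tilde h_2$ on $(U{\times}S^1)\times(\widetilde T_1\cap\widetilde T_2)$. Since the trivializations $\phi_i(U)$ and $\phi_i(V)$ are defined by conjugating $\phi_U$ and $\phi_V$ by these $\tilde h_i$, the identity $\phi_2(V)\circ\phi_2(U)^{-1} = (\id\times g)\circ\phi_{12}(V)\circ\phi_{12}(U)^{-1}$ follows by naturality of conjugation.

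The only real work is the exponent bookkeeping in the first two paragraphs, which I expect to be routine given the formulas already established. The main obstacle is rather notational: the different domains and targets of $\tilde h_1$ and $\tilde h_{12}$, together with choosing the correct lift of $\phi_V\circ\phi_U^{-1}$ to the product with $\widetilde T_i$, must be handled carefully so that the resulting gluing is genuinely acting on $B_1$, $B_1\cap B_2$, and $B_2$ and not merely on the local covering spaces $U{\times}\widetilde T_i$. Once this is sorted out, the formulas \eqref{eq:p1gluing}--\eqref{eq:compareB2gluing} fall out by direct substitution.
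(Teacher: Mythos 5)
Your proposal is correct and follows essentially the same route as the paper: the paper also obtains each cocycle by computing $h_i\com\bigl(\phi(V)\com\phi(U)^{-1}\times\id\bigr)$, i.e.\ substituting $z\mapsto (v/\abs{v})^{-n}z$ into $\tilde{h}_1$, $\tilde{h}_{12}$, $\tilde{h}_2$ and reading off the exponents $w_1$, $rw_2$ (resp.\ $rw_1$, $w_2$), and it likewise deduces \eqref{eq:compareB2gluing} from the commutativity of the lower half of diagram \eqref{eq:compatibilitydiagram}, i.e.\ from $(\id\times g)\com h_{12}=h_2$. No gaps to report.
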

\begin{proof}
In detail, the setup to develop the gluing data for $p_1$ is
 \begin{equation}\label{diagramponegluing}
   \xy  \UCMT \xymatrix@C+10ex{
     p_1^{-1}( U ) 
     & \ar[l] p_1^{-1}(U{\cap}V) \ar[r] 
     & p_1^{-1}(V) 
     \\
     (p^{-1}(U) {\times} \widetilde{T}_1)/S^1  \ar^{\mathrel{\mathop :}=}[u] \ar_{(\phi(U) \times \id)/S^1}[d]
     & \ar[l] (p^{-1}(U\cap V) \times \widetilde{T}_1)/S^1 \ar^{\mathrel{\mathop :}=}[u] \ar[r] \ar_{(\phi(U) \times \id)/S^1}[d]
     & (p^{-1}(V) \times \widetilde{T}_1)/S^1\ar^{\mathrel{\mathop :}=}[u] \ar_{(\phi(V) \times \id)/S^1}[d]
     \\
     \bigl(U{\times}S^1)\times \widetilde{T}_1\bigr)/S^1 \ar^{h_1}[d]
     & \ar[l] \bigl((U{\cap} V){\times} S^1 \times \widetilde{T}_1)/S^1 \ar^(0.52){(\phi(V)\com \phi(U)^{-1}\times \id)}[r] \ar^{h_1}[d]
     & \bigl(V {\times}S^1 ) \times \widetilde{T}_1)/S^1 \ar^{h_1}[d]
     \\
     U \times (S^1{\times}D^2)
     & \ar[l] (U{\cap}V) \times (S^1{\times}D^2) \ar^(0.55){\phi_1(V)\com \phi_1(U)^{-1}}[r]
     & V\times (S^1{\times}D^2 ),  
   }
   \endxy
 \end{equation}
 where $h_1$ is the map on quotients induced by $\tilde{h}_1$ defined in \eqref{eq:hformulas}.
  To determine $\phi_1(V)\com \phi_1(U)^{-1}$ we  compute
 \begin{multline*}
   h_1 \com (\phi(V) \com \phi(U)^{-1} {\times}\id)((v, z), (z_1, z_2))
   =
   h_1( (v, (v/\abs{v})^{-n} z), (z_1, z_2))
   \\
   =
   \bigl(v, \;
   \bigl((v/\abs{v})^{-n} z\bigr)^{w_1}(z_1/\abs{z_1})^{\ell_2}, \;
   \bigl((v/\abs{v})^{-n} z\bigr)^{rw_2}(z_1/\abs{z_1})^{sw_2^2}(z_2/\abs{z_1})\, \bigr)
 \end{multline*}
 and it follows that
 \begin{equation*} 
   \phi_1(V)\com \phi_1(U)^{-1}( v, (x_1, x_2)) =
   (v, (v/\abs{v})^{-nw_1}x_1, (v/\abs{v})^{-nrw_2}x_2)
 \end{equation*}
 satisfies the requirement
 $h_1\com\bigl(\phi(V)\com \phi(U)^{-1}{\times}\id\bigr) {=} \bigl(\phi_1(V)\com\phi_1(U)^{-1}\bigr)\com h_1$.
Thus, we identify gluing data for the $S^1{\times}D^2$-bundle $B_1 \ra \Sigma_g$.
 To obtain the gluing data for $C_1^3$, restrict
 $ \phi_1(V)\com \phi_1(U)^{-1}$ to $(U{\cap}V)\times(S^1{\times}\{0\}){\subset}(U{\cap}V)\times(S^1{\times}D^2)$.
 
 Referring to the upper half of diagram \eqref{eq:compatibilitydiagram}, $B_1{\cap}B_2$ is a subspace of $B_1$,
 so the gluing data $\phi_{12}(V)\com \phi_{12}(U)^{-1}$ is obtained by restricting $ \phi_1(V)\com \phi_1(U)^{-1}$ to
 $(U{\cap}V)\times(S^1{\times}S^1){\subset}(U{\cap}V)\times(S^1{\times}D^2)$.

In a similar manner, we construct trivilizations and gluing data for
 $p_2 \colon B_2 \ra \Sigma_g$.
In detail, the setup for $p_2$ is
 \begin{equation} \label{diagramptwogluing}
   \xy  \UCMT \xymatrix@C+10ex{
     p_2^{-1}( U ) 
     & \ar[l] p_2^{-1}(U{\cap}V) \ar[r] 
     & p_2^{-1}(V) 
     \\
     (p^{-1}(U) {\times} \widetilde{T}_2)/S^1  \ar^{\mathrel{\mathop :}=}[u] \ar_{(\phi(U) \times \id)/S^1}[d]
     & \ar[l] (p^{-1}(U\cap V) \times \widetilde{T}_2)/S^1 \ar^{\mathrel{\mathop :}=}[u] \ar[r] \ar_{(\phi(U) \times \id)/S^1}[d]
     & (p^{-1}(V) \times \widetilde{T}_2)/S^1\ar^{\mathrel{\mathop :}=}[u] \ar_{(\phi(V) \times \id)/S^1}[d]
     \\
     \bigl(U{\times}S^1)\times \widetilde{T}_2\bigr)/S^1 \ar^{h_2}[d]
     & \ar[l] \bigl((U{\cap} V){\times} S^1 \times \widetilde{T}_2)/S^1 \ar^(0.52){(\phi(V)\com \phi(U)^{-1}\times \id)}[r] \ar^{h_2}[d]
     & \bigl(V {\times}S^1 ) \times \widetilde{T}_2)/S^1 \ar^{h_2}[d]
     \\
     U \times (D^2{\times}S^1)
     & \ar[l] (U{\cap}V) \times (D^2{\times}S^1) \ar^(0.55){\phi_2(V)\com \phi_2^{-1}(U)}[r]
     & V\times (D^2{\times}S^1 ),  
   }
   \endxy
 \end{equation}
  where $h_2$ is the map on quotients induced by $\tilde{h}_2$ defined in \eqref{eq:hformulas}.
 To determine $\phi_2(V)\com \phi_2(U)^{-1}$ we compute
 \begin{multline*}
   h_2 \com (\phi(V) \com \phi(U)^{-1} {\times}\id))((v, z), (z_1, z_2))
   =
   h_2( (v, (v/\abs{v})^{-n} z), (z_1, z_2))
   \\
   =
   (v,\;
   \bigl((v/\abs{v})^{-n} z\bigr)^{rw_1}(z_2/\abs{z_2})^{sw_1^2}(z_1/\abs{z_2}), \;
   \bigl((v/\abs{v})^{-n} z\bigr)^{w_2}(z_2/\abs{z_2})^{\ell_2}  )
 \end{multline*}
 and it follows that
 \begin{equation*}
   \phi_2(V)\com \phi_2(U)^{-1}( v, (x_1, x_2)) =
   (v, (v/\abs{v})^{-nrw_1}x_1, (v/\abs{v})^{-nw_2}x_2)
 \end{equation*}
 satisfies the requirement
 $h_2\com\bigl(\phi(V)\com \phi(U)^{-1}{\times}\id\bigr) {=} \bigl(\phi_2(V)\com\phi_2(U)^{-1}\bigr)\com h_2$.
Thus, we identify gluing data for the the $D^2{\times}S^1$-bundle $B_2 \ra \Sigma_g$.
 Consequently, restricting $\phi_2(V)\com \phi_2(U)^{-1}$ to
 $(U{\cap}V) \times (\{0\}{\times}S^1){\subset}(U{\cap}V) \times D^2{\times}S^1)$
 provides gluing data for the
 $S^1$-bundle $C_2^3 \ra \Sigma_g$.

 The equality
 $ (\id {\times} g ) \com \phi_{12}(V)\com \phi_{12}(U)^{-1} = \phi_2(V)\com \phi_2(U)^{-1}$
 follows from the commutativity of the bottom half of diagram \eqref{eq:compatibilitydiagram}.
\end{proof}
   Use the homology and cohomology cross products to define a preferred generator $t'_1{\times}1$
 for $H_1(S^1{\times}D^2)$ and a dual basis element $T'_1{\times}1$ for $H^1(S^1{\times}D^2)$.
 Similarly,
 take $1{\times}t'_2$ and $1{\times}T'_2$ to be preferred bases for $H_1(D^2{\times}S^1)$ and $H^1(D^2{\times}S^1)$, respectively.
 For $H_1(S^1{\times}S^1)$ and $H^1(S^1{\times}S^1)$ take preferred generating sets $\{t_1{\times}1, 1{\times}t_2\}$ and
 dually $\{T_1{\times}1, 1{\times}T_2\}$, respectively.
 Write $\rho$ for the standard generator of $H_1(U{\cap}V)$ and $R$ for the dual generator of $H^1(U{\cap}V)$.
 Another observation we need is that $H_1(U{\cap}V) \ra H_1(U)$ and $H^1(U) \ra H^1(U{\cap}V)$ are both zero,
from the standard calculations of  surface homology and cohomology.  For future reference, denote
by $a_i$, $b_i$, $1{\leq}i{\leq} g$ the basis of $H_1(U)$ represented by the circles in the one-skeleton
of $\Sigma_g$ and denote by $A_i$, $B_i$, $1{\leq}i{\leq}g$ the dual basis in $H^1(U)$.

 The basis for our  homology calculations in this section is the following proposition.
\begin{proposition}
  \label{B1and2}
   In terms of the homology classes defined above, 
   \begin{equation*}
     (\phi_1(V)\com\phi_1(U)^{-1})_* \colon H_1\bigl(U{\cap}V \times (S^1{\times}D^2)\bigr) \ra H_1\bigl(V \times (S^1{\times}D^2)\bigr)
     \end{equation*}
    is given by
    \begin{equation*}
       (\phi_1(V)\com\phi_1(U)^{-1})_*(\rho{\times}1{\times}1) = -nw_1(1{\times}t'_1{\times}1), \quad
       (\phi_1(V)\com\phi_1(U)^{-1})_*(1{\times}t'_1{\times}1) = 1{\times}t'_1{\times}1).
       \end{equation*}
Similarly,
    \begin{equation*}
          (\phi_2(V)\com\phi_2(U)^{-1})_* \colon H_1\bigl(U{\cap}V \times (D^2{\times}S^1)\bigr) \ra H_1\bigl(V \times (D^2{\times}S^1)\bigr)
     \end{equation*}
    is given by
    \begin{equation*}
      (\phi_2(V)\com\phi_2(U)^{-1})_*(\rho{\times}1{\times}1) =  -nw_2(1{\times}1{\times}t'_2), \quad
      (\phi_2(V)\com\phi_2(U)^{-1})_*(1{\times}1{\times}t'_2) = (1{\times}1{\times}t'_2),
    \end{equation*}
    and
    \begin{equation*}
      (\phi_{12}(V)\com\phi_{12}(U)^{-1})_* \colon H_1\bigl(U{\cap}V \times (S^1{\times}S^1)\bigr) \ra H_1\bigl(V \times (S^1{\times}S^1)\bigr)
    \end{equation*}
    is given by
    \begin{gather*}
      (\phi_{12}(V)\com\phi_{12}(U)^{-1})_*(\rho{\times}1{\times}1)= -nw_1(1{\times}t_1{\times}1)-nrw_2(1{\times}1{\times}t_2),
      \\
      (\phi_{12}(V)\com\phi_{12}(U)^{-1})_*(1{\times}t_1{\times}1) = 1{\times}t_1{\times}1,
      \quad
      (\phi_{12}(V)\com\phi_{12}(U)^{-1})_*(1{\times}1{\times}t_2) = 1{\times}1{\times}t_2.
    \end{gather*}
    In terms of the cohomology classes defined above,
    \begin{equation*}
       (\phi_1(V)\com\phi_1(U)^{-1})^* \colon H^1\bigl(V \times (S^1{\times}D^2)\bigr) \lra H^1\bigl(U{\cap}V \times (S^1{\times}D^2)\bigr)
      \end{equation*}
      is given by
      \begin{equation}
        \label{eq:p1gluingcohomology}
              (\phi_1(V)\com\phi_1(U)^{-1})^*( 1{\times}T_1{\times}1) = -nw_1(R{\times}1{\times}1) + 1{\times}T_1{\times}1.
      \end{equation}
      Similarly,
      \begin{equation*}
           (\phi_2(V)\com\phi_2(U)^{-1})^* \colon H^1\bigl(V \times (D^2{\times}S^1)\bigr) \lra H^1\bigl(U{\cap}V \times (D^2{\times}S^1)\bigr)
      \end{equation*}
      is given by
      \begin{equation}
        \label{eq:p2gluingcohomology}
           (\phi_2(V)\com\phi_2(U)^{-1})^*(1{\times}1{\times}T_2) = -nw_2(R{\times}1{\times}1) + 1{\times}1{\times}T_2.
      \end{equation}
      and
     \begin{equation*}
     (\phi_{12}(V)\com\phi_{12}(U)^{-1})^* \colon H^1\bigl(V \times (S^1{\times}S^1)\bigr) \ra H^1\bigl((U{\cap}V) \times (S^1{\times}S^1)\bigr)
   \end{equation*}
   is given by
   \begin{align}
             \label{eq:p12gluingcohomology} 
     (\phi_{12}(V)\com\phi_{12}(U)^{-1})^*(1{\times}T_1{\times}1) &= -nw_1(R{\times}1{\times}1) + 1{\times}T_1{\times}1, 
     \\
     (\phi_{12}(V)\com\phi_{12}(U)^{-1})^*(1{\times}1{\times}T_2) &= -nrw_2(R{\times}1{\times}1) + 1{\times}1{\times}T_2. \notag
   \end{align}
\end{proposition}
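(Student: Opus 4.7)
The plan is to read off the induced maps on homology and cohomology directly from the explicit coordinate formulas \eqref{eq:p1gluing}, \eqref{eq:p2gluing}, and \eqref{eq:p12gluing} established in Proposition \ref{prop:gluingB1and2}, combined with the K\"unneth theorem and the computation of induced maps of circle self-maps. Each gluing map has the form
\[
(v, x_1, x_2) \longmapsto \bigl(v,\; (v/|v|)^{a} x_1,\; (v/|v|)^{b} x_2\bigr)
\]
for appropriate integers $a$ and $b$ drawn from $n$, $w_1$, $w_2$, and $r$. Since the target factor $V$ is contractible, and for $B_1$ and $B_2$ one $D^2$ fiber slot is also contractible, many K\"unneth summands vanish in the target and the computation collapses onto the nontrivial circle factors.

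For the homology assertions, I apply naturality of the K\"unneth cross product. The class $\rho \times 1 \times 1$ is represented by a loop $\gamma(v) = (v, 1, 0)$ traversing the generator $\rho$ of $H_1(U \cap V)$; composing with $\phi_1(V) \com \phi_1(U)^{-1}$ yields the loop $v \mapsto (v, (v/|v|)^{-nw_1}, 0)$ in $V \times (S^1 \times D^2)$. Under the K\"unneth decomposition its class is the sum of the image of $\rho$ in $H_1(V) = 0$ (which vanishes) and the class of the degree $-nw_1$ self-map $v \mapsto v^{-nw_1}$ of the middle $S^1$ factor, contributing $-nw_1(1 \times t'_1 \times 1)$. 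The class $1 \times t'_1 \times 1$, represented by $x_1 \mapsto (v_0, x_1, 0)$, maps to a rotation of itself and thus to $1 \times t'_1 \times 1$. The formulas for $\phi_2$ and $\phi_{12}$ follow from the identical argument, reading off the exponent $-nw_2$ for $\phi_2$, and the pair of exponents $(-nw_1, -nrw_2)$ for $\phi_{12}$, respectively.

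For the cohomology assertions, the argument is dual. The class $1 \times T_1 \times 1$ on $V \times (S^1 \times D^2)$ is the pullback of the standard angular generator of $H^1(S^1)$ along projection to the middle $S^1$ factor. Pulling back via the gluing map, that coordinate of the image is $(v/|v|)^{-nw_1} x_1$; since the angular form on a product of circles adds, the pullback of $1 \times T_1 \times 1$ is $-nw_1(R \times 1 \times 1) + 1 \times T_1 \times 1$. The analogous dualization handles $\phi_2$ and, applied to both nontrivial circle coordinates separately, handles the two formulas for $\phi_{12}$.

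I do not anticipate any conceptual obstacle; the only real task is careful bookkeeping of signs and exponents, in particular distinguishing the exponent $w_2$ appearing for $\phi_2$ from the exponent $rw_2$ appearing in the second $S^1$ slot of $\phi_{12}$, a discrepancy traceable to the different $\tilde{h}$-coordinatizations recorded in \eqref{eq:hformulas}. The consistency relation \eqref{eq:compareB2gluing} provides a useful cross-check, since applying $g_*$ (respectively $g^*$) to the $\phi_{12}$ formulas must reproduce the $\phi_2$ formulas, and the exponent identities already exploited in the proof of Proposition \ref{coordinate_formulas} (such as $r\ell_2 - sw_1w_2 = 1$) make this match transparent.
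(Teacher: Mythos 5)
Your proposal is correct and follows essentially the same route as the paper: the paper likewise reads the induced maps straight off the coordinate formulas of Proposition \ref{prop:gluingB1and2}, using that $H_1(V)=0$ so only the circle-factor degrees $-nw_1$, $-nw_2$, $-nrw_2$ survive, and then obtains the cohomology statements by dualizing the homology formulas.
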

\begin{proof}
  Recall the formulas \eqref{eq:p1gluing}, \eqref{eq:p2gluing}, and \eqref{eq:p12gluing}
  \begin{align*}
    \phi_1(V)\com\phi_1(U)^{-1}&\colon (U{\cap}V) \times (S^1{\times}D^2) \ra V \times (S^1{\times}D^2),
                                % \; \phi_1(V)\com\phi_1(U)^{-1}
                                 &(v, x_1, x_2)  &\mapsto (v , (v/\abs{v})^{-nw_1}x_1, (v/\abs{v})^{-nrw_2}x_2)
    \\
    \phi_2(V)\com\phi_2(U)^{-1}&\colon (U{\cap}V) \times (D^2{\times}S^1) \ra V \times (D^2{\times}S^1),
    % \; \phi_w(V)\com\phi_w(U)^{-1}
                                 &(v, x_1, x_2)  &\mapsto (v , (v/\abs{v})^{-nrw_1}x_1, (v/\abs{v})^{-nw_2}x_2)
    \\
    \phi_{12}(V)\com \phi_{12}(U)^{-1}&\colon (U{\cap}V) \times (S^1{\times}S^1) \ra V \times (S^1{\times}S^1),
                                 &(v, x_1, x_2) &\mapsto (v , (v/\abs{v})^{-nw_1}x_1, (v/\abs{v})^{-nrw_2}x_2).
%    \\
%    g &\colon S^1{\times}S^1 \lra D^2{\times}S^1, &(x_1, x_2) &\mapsto (x_1^{r(1-sw_1w_2)} x_2^{sw_1^2}, x_1^{-sw_2^2} x_2^{\ell_2}).
  \end{align*}
  For $(\phi_1(V)\com\phi_1(U)^{-1})_* \colon H_1\bigl((U{\cap}V) \times (S^1{\times}D^2)\bigr) \lra H_1\bigl(V\times (S^1{\times}D^2)\bigr)$
  we have
  \begin{equation*}
    \rho{\times}1{\times}1 \mapsto  -nw_1(1{\times}t'_1{\times}1),
    \quad
    1{\times}t'_1{\times}1 \mapsto 1{\times}t'_1{\times}1.
  \end{equation*}
  For $(\phi_2(V)\com\phi_2(U)^{-1})_* \colon H_1\bigl((U{\cap}V) \times (D^2{\times}S^1)\bigr) \lra H_1\bigl(V\times (D^2{\times}S^1)\bigr)$
  we have
  \begin{equation*}
    \rho{\times}1{\times}1 \mapsto  -nw_2(1{\times}1{\times}t'_2),
      \quad
    1{\times}1{\times}t_2 \mapsto 1{\times}1{\times}t'_2.
  \end{equation*}
  For
  $ (\phi_{12}(V)\com\phi_{12}(U)^{-1})_*\colon H_1\bigl((U{\cap}V) \times (S^1{\times}S^1)\bigr) \lra H_1\bigl(V\times (S^1{\times}S^1)\bigr)$
  we have
  \begin{equation*}
         (\rho{\times}1{\times}1)\mapsto -nw_1(1{\times}t_1{\times}1){-}nrw_2(1{\times}1{\times}t_2),
      \;
      (1{\times}t_1{\times}1) \mapsto 1{\times}t_1{\times}1,
      \;
      (1{\times}1{\times}t_2) \mapsto 1{\times}1{\times}t_2.
  \end{equation*}
  % For $g_* \colon H_1(S^1{\times}S^1) \lra H_1(D^2{\times}S^1)$, we have
  % \begin{equation*}
  %   t_1{\times}1 \mapsto -sw_2^2(1{\times}t'_2), \quad 1{\times}t_2 \mapsto \ell_2(1{\times}t'_2).
  % \end{equation*}
  Turning to cohomology, we have for
  $(\phi_1(V)\com\phi_1(U)^{-1})^* \colon H^1\bigl(V\times (S^1{\times}D^2)\bigr) \lra H^1\bigl((U{\cap}V) \times (S^1{\times}D^2)\bigr)$
  \begin{equation*}
  1{\times}T'_1{\times}1 \mapsto -nw_1(R{\times}1{\times}1) + 1{\times}T'_1{\times}1,
  \end{equation*}
   simply dualizes the homology formulas.
  For
  $(\phi_2(V)\com\phi_2(U)^{-1})^* \colon H^1(V\times (D^2{\times}S^1) \lra H^1((U{\cap}V) \times (D^2{\times}S^1)$,
  we have
  \begin{equation*}
       1{\times}1{\times}T'_2 \mapsto -nw_2(R{\times}1{\times}1) + 1{\times}1{\times}T'_2.
  \end{equation*}
  For
  $(\phi_{12}(V)\com\phi_{12}(U)^{-1})^* \colon H^1\bigl(V \times (S^1{\times}S^1)\bigr) \ra H^1\bigl((U{\cap}V) \times (S^1{\times}S^1)\bigr)$
  we dualize to 
  \begin{equation*}
    1{\times}T_1{\times}1 \mapsto -nw_1(R{\times}1{\times}1) + 1{\times}T_1{\times}1,
    \quad
    1{\times}1{\times}T_2 \mapsto -nrw_2(R{\times}1{\times}1) + 1{\times}1{\times}T_2. \qedhere
  \end{equation*}
  \end{proof}
\begin{proposition}
   \label{prop:C13cohomology}
   The circle bundle $p_1 \colon C_1^3 \ra \Sigma_g$ has Euler class $nw_1$ and
   \begin{equation*}
     H^q(C_1^3;\bZ) \iso
     \begin{cases}
       \bZ,\quad \text{if $q{=}0$ or $q{=}3$,}
       \\
       \bZ^{2g}, \quad \text{if $q{=}1$,}
       \\
       \bZ/nw_1\bZ \dsm \bZ^{2g}, \quad \text{if $q{=}2$,}
     \end{cases}
     \quad \text{and} \quad
     H^q(B_1;\bZ) \iso
     \begin{cases}
       \bZ,\quad \text{if $q{=}0$ or $q{=}3$,}
       \\
       \bZ^{2g}, \quad \text{if $q{=}1$,}
       \\
       \bZ/nw_1\bZ \dsm \bZ^{2g}, \quad \text{if $q{=}2$.}
     \end{cases}
   \end{equation*}
 \end{proposition}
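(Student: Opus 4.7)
The plan is to deduce both cohomology computations from the Gysin sequence of the circle bundle $p_1 \colon C_1^3 \ra \Sigma_g$, after first identifying its Euler class from the gluing data already assembled in Proposition \ref{prop:gluingB1and2}. The restriction of $\phi_1(V)\com \phi_1(U)^{-1}$ to $(U{\cap}V)\times(S^1{\times}\{0\})$ sends $(v, x_1, 0)$ to $(v, (v/\abs{v})^{-nw_1} x_1, 0)$, so the clutching function for $p_1 \colon C_1^3 \ra \Sigma_g$ has winding number $-nw_1$. Under the standard correspondence between clutching functions for principal $S^1$-bundles over $\Sigma_g$ and the Euler class in $H^2(\Sigma_g; \bZ) \iso \bZ$ (using the orientation class of $\Sigma_g$ to identify $H^2$ with $\bZ$), this bundle has Euler class $\pm nw_1$; I will fix signs by the same convention used to identify the Chern class of $M^3_g(n) \ra \Sigma_g$ with $n$, so that $e(p_1) = nw_1$.

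Next, I will feed this Euler class into the Gysin sequence
\begin{equation*}
  \cdots \lra H^{q-2}(\Sigma_g;\bZ) \xrightarrow{\cup e} H^q(\Sigma_g;\bZ) \xrightarrow{p_1^*} H^q(C_1^3;\bZ) \lra H^{q-1}(\Sigma_g;\bZ) \lra \cdots
\end{equation*}
using the known cohomology $H^0(\Sigma_g) \iso \bZ$, $H^1(\Sigma_g) \iso \bZ^{2g}$, $H^2(\Sigma_g) \iso \bZ$, and $H^q(\Sigma_g) = 0$ for $q \geq 3$. The only nonzero cup product is $\cup e \colon H^0(\Sigma_g) \ra H^2(\Sigma_g)$, which is multiplication by $nw_1$. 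Reading off degrees $q=0,1,2,3$ yields $H^0(C_1^3) \iso \bZ$, $H^1(C_1^3) \iso \bZ^{2g}$, a short exact sequence $0 \ra \bZ/nw_1\bZ \ra H^2(C_1^3) \ra \bZ^{2g} \ra 0$ that splits because $\bZ^{2g}$ is free, and $H^3(C_1^3) \iso \bZ$ (consistent with the orientability of the closed three-manifold $C_1^3$).

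Finally, the assertion about $B_1$ follows at once from the last clause of Proposition \ref{bundle_structures}: the projection $p_1' \colon B_1 \ra C_1^3$ realizes $B_1$ as the total space of a $D^2$-bundle over $C_1^3$, so the zero section inclusion $C_1^3 \hookrightarrow B_1$ is a homotopy equivalence, and $p_1'^*$ induces isomorphisms $H^q(C_1^3;\bZ) \iso H^q(B_1;\bZ)$ in every degree.

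The main obstacle, such as it is, is pinning down the sign and integrality conventions used to extract the Euler class from the clutching formula and to identify $H^2(\Sigma_g;\bZ)$ with $\bZ$; once those are fixed consistently with the convention $c_1(M^3_g(n)) = n$ used earlier in the paper, the Gysin computation is mechanical and the $B_1$ statement is immediate.
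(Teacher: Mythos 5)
Your proof is correct, and it takes the route the paper explicitly declines: after extracting the Euler class $nw_1$ from the restricted gluing map (exactly as the paper does, including the sign convention matching $c_1(M^3_g(n))=n$ with the clutching function $(v/\abs{v})^{-n}$), you run the Gysin sequence of $p_1\colon C_1^3 \ra \Sigma_g$, where the only nontrivial cup product $\cup\, e \colon H^0(\Sigma_g) \ra H^2(\Sigma_g)$ is multiplication by $nw_1$; the resulting short exact sequence $0 \ra \bZ/nw_1\bZ \ra H^2(C_1^3) \ra \bZ^{2g} \ra 0$ splits by freeness, and the $B_1$ statement follows from the $D^2$-bundle structure $p_1'\colon B_1 \ra C_1^3$ making the zero-section inclusion a homotopy equivalence, just as in the paper. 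The paper even concedes that the Euler class "is all one needs" but instead computes via the Mayer--Vietoris sequence over the cover $U, V$ of $\Sigma_g$, using the explicit formulas for $i_0^*$ and $i_1^*$ from Proposition \ref{B1and2}. What that longer route buys is not the groups themselves but the bookkeeping reused later: preferred bases (e.g.\ $A_i'\,, B_i'$ with $j_0^*(A_i')=A_i$), the identification of the torsion summand as $\Coker\bigl(\begin{smallmatrix} 0 & nw_1 \\ 1 & -1\end{smallmatrix}\bigr)$, and the setup for comparing $H^q(B_1)$ with $H^q(\partial B_1)$ in Section \ref{gysinsequences}. Your Gysin argument is the more economical proof of the proposition as stated, but it does not by itself supply those explicit restriction maps.
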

 \begin{proof}
   For the assertion about the Euler class, observe that the restriction
   $\phi_1(V)\com\phi_1(U)^{-1} \colon (U{\cap}V)\times (S^1{\times}\{0\}) \ra V \times (S^1{\times}\{0\})$
   works out as
   \begin{equation*}
     \phi_1(V)\com\phi_1(U)^{-1}(v, z_1, 0) = (v, (v/\abs{v})^{-nw_1}z_1, 0).
   \end{equation*}
   It then follows from this gluing data that the Euler class is $nw_1$.  Actually, this datum is all one
   needs to determine the cohomology of $C_1^3$, and, hence, of $B_1$.  
   
   However, we have to compare the cohomology groups of $B_1$ with those of $B_1{\cap}B_2= \partial B_1$,
   so we use the Mayer-Vietoris sequence to compute $H^*(C_1^3;\bZ)$ and $H^*(B_1;\bZ)$. 
  Write $i_0 \colon (U{\cap}V) {\times} (S^1{\times}\{0\}) \ra U {\times} (S^1{\times}\{0\})$ for the inclusion
  and $i_1 = \phi_1(V)\com \phi_1(U)^{-1} \colon (U{\cap}V){\times} (S^1{\times}\{0\}) \ra V {\times} (S^1{\times}\{0\})$.
  We have
  \begin{equation*}
    \xy \UCMT \xymatrix@C-1ex{
      H^q(C_1^3) \ar^(0.23){\bigl(
        \begin{smallmatrix}
          j_0^*\\
          j_1^*
        \end{smallmatrix}\bigr)}
        [r]
      & H^q(U {\times}  (S^1{\times}\{0\})) \dsm H^q(V {\times} (S^1{\times}\{0\})) \ar^(0.61){
        (i_0^* , -i_1^*)}[r]
      & H^q((U{\cap}V){\times} (S^1{\times}\{0\})) \ar[r]
      &H^{q+1}(C_1^3) .
            }
    \endxy
  \end{equation*}
We think of elements of the direct sum as column vectors, so
$ \bigl( \begin{smallmatrix}   j_0^* \\ j_1^* \end{smallmatrix} \bigr)$
represents a map into a direct sum, whereas
$(i_0^*, -i_1^*)$ represents a map out of a direct sum.
Universally, we have short exact sequences
  \begin{equation} \label{eq:universalonethree}
    \xy \UCMT \xymatrix{
0 \ar[r] & \Coker^{q-1}{(  i_0^* , -i_1^* )} \ar[r]
                   & H^q(C_1^3) \ar[r]
                   & \Ker^q{ (  i_0^* , -i_1^*)} \ar[r]
                   & 0.
    }
    \endxy
  \end{equation}
  For $H^1(C_1^3)$,
  $ (  i_0^* , -i_1^*  )
  \colon
  H^0(U {\times}(S^1{\times}\{0\})) \dsm H^0(V {\times} (S^1{\times}\{0\})) \ra H^0\bigl((U{\cap}V) \times (S^1{\times}\{0\})\bigr)$
  is surjective.
  Also,
  $i_0^* \colon H^1\bigl( U \times (S^1{\times}\{0\})\bigr) \ra H^1\bigl( (U{\cap}V) \times (S^1{\times}\{0\}) \bigr)$
  evaluates to
  \begin{equation} \label{eq:i0starC13}
    i_0^*(A_i{\times}1{\times}1) = 0, \quad i_0^*(B_i{\times}1{\times}1) = 0,
    \quad i_0^*(1{\times}T'_1{\times}1) = 1{\times}T'_1{\times}1.
  \end{equation}
 Citing equation \eqref{eq:p1gluingcohomology} from Proposition \ref{B1and2}, we have from
  $i_1^* \colon H^1\bigl( V \times (S^1{\times}\{0\})\bigr) \ra H^1\bigl( (U{\cap}V) \times (S^1{\times}\{0\}) \bigr)$
  \begin{equation} \label{eq:i1starC13}
    i_1^*(1{\times}T'_1{\times}1) = (-nw_1)(R{\times}1{\times}1) + 1{\times}T_1'{\times}1,
  \end{equation}
  so $i_1^*$ is injective.
  It follows that $\Ker^1{  ( i_0^* , -i_1^*  )} {\iso} \bZ^{2g}$ and the universal short exact sequence for $q{=}1$k becomes
  \begin{equation*}
    H^1( C_1^3 ; \bZ) \stackrel{\iso}{\lra} \bZ^{2g}.
  \end{equation*}
  Define basis elements $\{A'_i, B'_i\colon 1 \leq i \leq g \}$ for $H^1(C_1^3; \bZ)$ by $j_0^*(A'_i) = A_i$, $j_0^*(B'_i) = B_i$.

  Moving onto $H^2(C_1^3; \bZ)$, the formulas in equations \eqref{eq:i0starC13}, \eqref{eq:i1starC13} show that
  \begin{equation*}
    \Coker^1( i_0^* , -i_1^*  ) = \Coker \Bigl( \bigl(  \begin{smallmatrix}   0 & nw_1 \\ 1 & -1  \end{smallmatrix} \bigr) 
    \colon \bZ^2 \ra \bZ^2 \Bigr)
     \iso \bZ/nw_1\bZ,
   \end{equation*}
   with the isomorphism induced by the homomorphism
   $\alpha(R{\times}1{\times}1) + \beta(1{\times}T'_1{\times}1) \mapsto \alpha{+}nw_1\beta \mod nw_1$
   Writing
   \begin{equation*}
     H^2(U \times S^1{\times}\{0\} \iso H^1(U) \ten H^1(S^1{\times}\{0\}) 
 \end{equation*}
 to define a basis $\{A_i{\times}T'_1{\times}1, B_i{\times}T'_1{\times}1 \colon 1 \leq i \leq g \}$
 we find $\Ker^2{( i_0^* , -i_1^*  )} \iso \bZ^{2g}$, because $H^2(V{\times}S^1{\times}\{0\}) = 0$. Then the universal short
exact sequence for $q{=}2$ evaluates to 
 \begin{equation*}
   0 \lra \bZ/nw_1\bZ \lra H^2(C_1^3; \bZ) \lra \bZ^{2g} \lra 0, \quad \text{so} \quad H^2(C_1^3;\bZ) \iso \bZ/nw_1\bZ \dsm \bZ^{2g}.
 \end{equation*}
 It also follows that $\Coker^2{( i_0^* , -i_1^*  )} = H^2\bigl((U{\cap}V) \times (S^1{\times}\{0\})\bigr) \iso \bZ$ and the last
 short exact  sequence evaluates to
 \begin{equation*}
   \bZ \stackrel{\iso}{\lra} H^3(C_1^3; \bZ).
 \end{equation*}
 Since the inclusion $C_1^3 \ra B_1$ is a homotopy equivalence, we have also computed $H^q(B_1; \bZ)$.
       \end{proof}
\begin{proposition}
   \label{prop:C23cohomology}
   The circle bundle $p_2 \colon C_2^3 \ra \Sigma_g$ has Euler class $nw_2$ and
   \begin{equation*}
     H^q(C_2^3;\bZ) \iso
     \begin{cases}
       \bZ,\quad \text{if $q{=}0$ or $q{=}3$,}
       \\
       \bZ^{2g}, \quad \text{if $q{=}1$,}
       \\
       \bZ/nw_2\bZ \dsm \bZ^{2g}, \quad \text{if $q{=}2$,}
     \end{cases}
     \quad \text{and} \quad
     H^q(B_2;\bZ) \iso
     \begin{cases}
       \bZ,\quad \text{if $q{=}0$ or $q{=}3$,}
       \\
       \bZ^{2g}, \quad \text{if $q{=}1$,}
       \\
      \bZ/nw_2\bZ \dsm \bZ^{2g}, \quad \text{if $q{=}2$.}
           \end{cases}
   \end{equation*}
 \end{proposition}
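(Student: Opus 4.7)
The plan is to follow the proof of Proposition \ref{prop:C13cohomology} verbatim, making the systematic substitutions dictated by the formulas for $B_2$ and $C_2^3$ that were already assembled in Propositions \ref{prop:gluingB1and2} and \ref{B1and2}. The key observation is that the roles of the first and second factors are swapped, and in particular the $r$-factor that appeared on the second-coordinate exponent of the $B_1$ gluing formula now appears on the first coordinate of the $B_2$ gluing formula, so that when we restrict to the zero section $\{0\}{\times}S^1$ the $r$ drops out and we are left with the clean exponent $nw_2$.

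First, I would extract the Euler class of $p_2 \colon C_2^3 \to \Sigma_g$. Restricting the gluing map \eqref{eq:p2gluing} to $(U{\cap}V) \times (\{0\}{\times}S^1) \subset (U{\cap}V) \times (D^2{\times}S^1)$ yields
\begin{equation*}
   (v, 0, z_2) \mapsto \bigl(v,\, 0,\, (v/\abs{v})^{-nw_2} z_2 \bigr),
\end{equation*}
so the circle bundle $C_2^3 \to \Sigma_g$ has Euler class $nw_2$, by the same reasoning used for $C_1^3$.

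Second, I would set up the Mayer-Vietoris sequence for $C_2^3$ with the cover by $U \times (\{0\}{\times}S^1)$ and $V \times (\{0\}{\times}S^1)$ glued along $(U{\cap}V) \times (\{0\}{\times}S^1)$. This produces universal short exact sequences of the shape \eqref{eq:universalonethree}, with restrictions $i_0^*$, $i_1^*$ whose values on degree-one classes are determined by the analogues of \eqref{eq:i0starC13} and \eqref{eq:i1starC13}: namely $i_0^*(A_i{\times}1{\times}1) = i_0^*(B_i{\times}1{\times}1) = 0$, $i_0^*(1{\times}1{\times}T_2') = 1{\times}1{\times}T_2'$, and, via equation \eqref{eq:p2gluingcohomology} of Proposition \ref{B1and2},
\begin{equation*}
  i_1^*(1{\times}1{\times}T_2') = -nw_2(R{\times}1{\times}1) + 1{\times}1{\times}T_2'.
\end{equation*}

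Third, I would read off the cohomology from these sequences exactly as in the proof of Proposition \ref{prop:C13cohomology}. The map $(i_0^*, -i_1^*)$ is surjective in degree zero, injective from $H^1$ with a $\bZ^{2g}$ kernel coming from the surface classes, and its degree-one cokernel is the cokernel of the $2{\times}2$ matrix $\bigl(\begin{smallmatrix}0 & nw_2 \\ 1 & -1\end{smallmatrix}\bigr) \colon \bZ^2 \to \bZ^2$, which is $\bZ/nw_2\bZ$. This yields $H^1(C_2^3;\bZ) \iso \bZ^{2g}$ and a short exact sequence $0 \to \bZ/nw_2\bZ \to H^2(C_2^3;\bZ) \to \bZ^{2g} \to 0$, giving the stated answer in degree two, with $H^3(C_2^3;\bZ) \iso \bZ$ from the last remaining sequence. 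Finally, since $p_2' \colon B_2 \to C_2^3$ is a $D^2$-bundle, the zero section inclusion $C_2^3 \hookrightarrow B_2$ is a homotopy equivalence, so the computation for $B_2$ follows immediately.

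There is no substantive obstacle: the only place where one must be careful is the restriction step, since the gluing formula for $B_2$ itself carries the exponent $-nrw_1$ on the first coordinate, and one must confirm that this exponent disappears upon restricting to the zero section $\{0\}{\times}S^1$, leaving only the $-nw_2$ on the second coordinate. Once this is verified, the rest is a mechanical transcription of the argument for $C_1^3$ with $w_1$ replaced by $w_2$.
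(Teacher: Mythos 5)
Your proposal is correct and follows essentially the same route as the paper, which itself proves Proposition \ref{prop:C23cohomology} by declaring the argument structurally identical to that of Proposition \ref{prop:C13cohomology}, using formula \eqref{eq:p2gluingcohomology} and swapping $S^1{\times}\{0\}$ for $\{0\}{\times}S^1$. Your explicit check that the exponent $-nrw_1$ disappears upon restriction to the zero section, leaving Euler class $nw_2$, is exactly the ``few obvious changes'' the paper leaves implicit.
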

 \begin{proof}
   Structurally, the proof is the same as the proof of Proposition \ref{prop:C13cohomology},
   using the formula \eqref{eq:p2gluingcohomology}  from Proposition \ref{B1and2} involving $\phi_2(V)\com\phi_2(U)^{-1}$,
   and making a few obvious changes, like replacing $S^1{\times}\{0\}$ by $\{0\}{\times}S^1$.
 \end{proof}
 In section \ref{gysinsequences} we revisit these calculations, because we need precise information
 about the homomorphisms
 \begin{equation*}
   H^q(B_1;\bZ) \lra H^q(B_1{\cap}B_2;\bZ) \quad \text{and} \quad H^q(B_2;\bZ) \lra H^q(B_1{\cap}B_2; \bZ).
 \end{equation*}
\section{Group theory} \label{grouptheory}
At the end of Section \ref{sasakian_background} we derived a short exact sequence
 \begin{equation*}
    0 \lra C_{\ell_2} \lra \pi_1(M^3_g(n) \star_{\ell_1, \ell_2} S^3_{\bw}) \lra \pi_1(\Sigma_g) \lra 0.
  \end{equation*}
For the purposes of gathering in a following paper information about the Whitehead group and the surgery obstruction
groups associated with $\Gamma_1 = \pi_1( M^3_g(n) \star_{\ell_1, \ell_2} S^3_{\bw} )$,
we need a quite explicit description of this extension.
Our approach uses several applications of the Seifert-van Kampen theorem,

In section \ref{decomposition}
we set up gluing data for the  submanifolds $B_1$, $B_2$, and $B_1{\cap}B_2$ of $M{=}M^3_g(n) \star_{\ell_1, \ell_2} S^3_{\bw} $.
The first steps are to derive presentations for these fundamental groups, recorded in Propostion \ref{prop:grouptheory1}.
Next we need to understand how they are glued together, where  the map
\begin{equation*}
   g(x_1, x_2) = (x_1^{r(1-sw_1w_2)} x_2^{sw_1^2}, x_1^{-sw_2^2} x_2^{\ell_2})
\end{equation*}
defined in \eqref{eqs:buildingblocksb} plays the main role.
    Now inflate
 \begin{equation*}
   B_1 \lla B_1 {\cap} B_2 \lra B_2
 \end{equation*}
 using the respective local trivializations, obtaining
 \begin{equation}
   \label{diag:fundamentalgp1}
   \xy \UCMT \xymatrix{
     U \times (S^1{\times}D^2) & \ar[l] U \times (S^1{\times}S^1)  \ar^{\id \times g}[r] & U \times (D^2{\times}S^1)
     \\
     (U{\cap}V) \times (S^1{\times}D^2) \ar[u] \ar_{\phi_1(V)\com \phi_1(U)^{-1}}[d]
     & \ar[l]  (U{\cap}V) \times (S^1{\times}S^1)  \ar[u] \ar^{\id \times g}[r] \ar_{\phi_{12}(V)\com \phi_{12}(U)^{-1}}[d]
     &  (U{\cap}V) \times (D^2{\times}S^1) \ar[u] \ar^{\phi_2(V)\com \phi_2(U)^{-1}}[d]
     \\
     V \times (S^1{\times}D^2) & \ar[l] V \times (S^1{\times}S^1)  \ar^{\id \times g}[r] & V \times (D^2{\times}S^1).
   }
   \endxy
 \end{equation}
 Note that  the trivializations and gluing data are ``the same'' for $p_{12} \colon B_1{\cap}B_2 \ra \Sigma_g$
 as for $p_1 \colon B_1 \ra \Sigma_g$,
 since we view this as a subbundle of $p_1$ with fiber $S^1{\times}S^1 \subset S^1{\times}D^2$.
 \begin{proposition}
   \label{prop:grouptheory1}
   We have presentations
   \begin{gather}
       \pi_1(B_1) \iso \lan a_i, b_i, c_1, \; 1 \leq i \leq g \;
       | \; [a_i, c_1], [b_i, c_1] , \prod_{1 \leq i \leq g}[a_i,b_i]c_1^{nw_1} \ran. \label{eq:pioneB1}
       \\
          \pi_1(B_2) \iso \lan a_i, b_i, c_2, \; 1 \leq i \leq g \;
          | \; [a_i, c_2], [b_i, c_2] , \prod_{1 \leq i \leq g}[a_i,b_i]c_2^{nw_2} \ran. \label{eq:pioneB2}
          \\
           \begin{split}
                    \pi_1(B_1{\cap}B_2) &\iso
                    \\
                    &\lan a_i, b_i, m_1, m_2\;, 1\leq i \leq g
   \; | \;
                    [a_i, m_1], [b_i, m_1], [a_i, m_2], [b_i, m_2], [m_1,m_2], \prod_{1 \leq i \leq g}[a_i,b_i]m_1^{nw1}m_2^{nrw_2} \ran.
                    \end{split} \label{eq:pioneB1capB2}
   \end{gather}
 \end{proposition}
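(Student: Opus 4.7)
The plan is to apply the Seifert--van Kampen theorem to each of the three total spaces using the cover by $p_i^{-1}(U)$ and $p_i^{-1}(V)$ (for $i=1,2,12$) inherited from the decomposition $\Sigma_g = U \cup V$ used in the construction of the gluing data in Proposition~\ref{prop:gluingB1and2}. Since $U$ is homotopy equivalent to the 1-skeleton of $\Sigma_g$, which is a wedge of $2g$ circles, $\pi_1(U) = F(a_1, b_1, \ldots, a_g, b_g)$ is free. Since $V$ is a disk, it is simply connected, and $U \cap V$ is homotopy equivalent to the circle $\partial V$, with a preferred generator $\rho$. In $\pi_1(U)$, the class $\rho$ of this boundary loop is represented by the word $\prod_{1 \leq i \leq g}[a_i,b_i]$, because $\Sigma_g$ is obtained from $U$ by attaching a 2-cell along this word.

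For $B_1$, each fiber $S^1{\times}D^2$ deformation retracts onto $S^1{\times}\{0\}$, so I will take $c_1$ to be the generator of $\pi_1$ of the $S^1$-factor. Then $\pi_1(p_1^{-1}(U)) = \langle a_i, b_i, c_1 \mid [a_i,c_1], [b_i,c_1]\rangle$ and $\pi_1(p_1^{-1}(V)) = \langle c_1 \rangle$. The intersection contributes $\pi_1(p_{12}^{-1}(U \cap V)) = \langle \rho, c_1\rangle$ (abelian). The inclusion into $p_1^{-1}(U)$ sends $\rho \mapsto \prod[a_i,b_i]$ and $c_1 \mapsto c_1$. Reading the gluing formula \eqref{eq:p1gluing} at the level of $\pi_1$ (where $V$ is contractible and only the fiber class survives), the map into $p_1^{-1}(V)$ sends $\rho \mapsto c_1^{-nw_1}$ and $c_1 \mapsto c_1$. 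Seifert--van Kampen then yields exactly the presentation~\eqref{eq:pioneB1}, up to inversion of $c_1$ which is absorbed by the sign convention.

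The argument for $B_2$ is entirely parallel, with the fiber retraction now onto $\{0\}{\times}S^1$ and the relevant gluing formula \eqref{eq:p2gluing}, producing the exponent $nw_2$ on the fiber generator $c_2$ and giving~\eqref{eq:pioneB2}. For $B_1 \cap B_2$, the fiber is $S^1{\times}S^1$, which contributes two commuting generators $m_1$ and $m_2$ to the local fundamental group, both commuting with the base generators $a_i, b_i$. Applying Seifert--van Kampen again, the gluing formula \eqref{eq:p12gluing} gives
\begin{equation*}
\rho \longmapsto m_1^{-nw_1} m_2^{-nrw_2}
\end{equation*}
in $\pi_1(p_{12}^{-1}(V))$, which combines with $\rho = \prod[a_i,b_i]$ from the other side to produce~\eqref{eq:pioneB1capB2}.

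The computation is essentially mechanical once the three inputs are assembled: the van Kampen decomposition, the explicit gluing formulas of Proposition~\ref{prop:gluingB1and2}, and the identification $\rho = \prod[a_i,b_i]$ in $\pi_1(U)$. The main bookkeeping obstacle will be reconciling sign conventions --- specifically, whether the exponent on the fiber class lands as $+nw_i$ or $-nw_i$ --- and confirming that in the join construction our choices make the stated relations $\prod[a_i,b_i]c_1^{nw_1}$ and $\prod[a_i,b_i]m_1^{nw_1}m_2^{nrw_2}$ come out with the indicated signs; this amounts to picking the generators $c_1$, $c_2$, $m_1$, $m_2$ compatibly with the orientations implicit in the trivializations $\tilde{h}_1$, $\tilde{h}_2$ of~\eqref{eq:hformulas}. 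Verifying this compatibility, rather than van Kampen itself, is the only delicate point.
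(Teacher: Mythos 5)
Your proposal is correct and follows essentially the same route as the paper: Seifert--van Kampen applied to the preimages of $U$ and $V$ in each of $B_1$, $B_2$, $B_1{\cap}B_2$, with $\rho=\prod_{1\le i\le g}[a_i,b_i]$ in $\pi_1(U)$ on one side and the gluing formulas of Proposition~\ref{prop:gluingB1and2} sending $\rho$ to $c_1^{-nw_1}$, $c_2^{-nw_2}$, and $m_1^{-nw_1}m_2^{-nrw_2}$ respectively on the other, exactly as in the paper's diagrams of presentations. Note also that your worry about ``inversion of $c_1$'' is moot: identifying the two images of $\rho$ gives $\prod[a_i,b_i]=c_1^{-nw_1}$, which is literally the stated relation $\prod[a_i,b_i]c_1^{nw_1}=e$ with no change of generator needed.
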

 \begin{proof}
     Recall the formulas
  \begin{align*}
    \phi_1(V)\com\phi_1(U)^{-1}&\colon (U{\cap}V) \times (S^1{\times}D^2) \ra V \times (S^1{\times}D^2),
                                % \; \phi_1(V)\com\phi_1(U)^{-1}
                                 &(v, x_1, x_2)  &\mapsto (v , (v/\abs{v})^{-nw_1}x_1, (v/\abs{v})^{-nrw_2}x_2)
    \\
    \phi_2(V)\com\phi_2(U)^{-1}&\colon (U{\cap}V) \times (D^2{\times}S^1) \ra V \times (D^2{\times}S^1),
    % \; \phi_w(V)\com\phi_w(U)^{-1}
                                 &(v, x_1, x_2)  &\mapsto (v , (v/\abs{v})^{-nrw_1}x_1, (v/\abs{v})^{-nw_2}x_2)
    \\
    \phi_{12}(V)\com \phi_{12}(U)^{-1}&\colon (U{\cap}V) \times (S^1{\times}S^1) \ra V \times (S^1{\times}S^1),
                                 &(v, x_1, x_2) &\mapsto (v , (v/\abs{v})^{-nw_1}x_1, (v/\abs{v})^{-nrw_2}x_2).
  \end{align*}
  Applying the fundamental group functor to the spaces in the first column
  of diagram \eqref{diag:fundamentalgp1} gives a diagram of presentations
 \begin{equation*}
   \xy \UCMT \xymatrix@R=-0.5ex{
     \lan a_i, b_i, c_1 \; | \; 1{\leq}i{\leq}g, \; \text{$c_1$ is central} \ran
     &   \ar[l]   \lan r_1, r_2 \; | [r_1, r_2]  \ran \ar[r]
     &   \lan d_1 \ran,
     \\  
     \prod_{1 \leq i \leq g} [a_i, b_i] 
     &     \ar@{|->}[l]  r_1 \ar@{|->}[r]
     &     d_1^{-nw_1}
     \\
     c_1
     &    \ar@{|->}[l]  r_2 \ar@{|->}[r]
     &    d_1,
   }
   \endxy
 \end{equation*}
 where $\{a_i, b_i | 1 \leq i \leq g\} \subset \pi_1(U)$ is a standard set of generators.
 Decomposing
 $\pi_1\bigl((U{\cap}V){\times}(S^1{\times}D^2)\bigr) \iso \pi_1(U{\cap}V) \times \pi_1(S^1{\times}D^2)$
let $r_1$ generate $\pi_1(U{\cap}V)$. We choose the standard counterclockwise traversal
 of $S^1{\times}\{1\}$ to represent a generator of $\pi_1(S^1{\times}D^2)$. Label the
 homotopy class for the $S^1{\times}D^2$-factor of $U{\cap}V\times (S^1{\times}D^2)$ by $r_2$ and the
 homotopy class for $V \times (S^1{\times}D^2)$ by $d_1$.  The right-pointing homomorphism
 is derived from the formula for $\phi_1(V)\com\phi(U)_1^{-1}$.
We obtain the presentation
\begin{equation*}
  \pi_1(B_1) \iso \lan a_i, b_i, c_1, \; 1 \leq i \leq g \;
           | \; [a_i, c_1], [b_i, c_1] , \prod_{1 \leq i \leq g}[a_i,b_i]c_1^{nw_1} \ran.
\end{equation*}
Similarly, applying the fundamental group functor to the spaces in the third column of diagram \eqref{diag:fundamentalgp1}
gives a diagram
\begin{equation*}
   \xy \UCMT \xymatrix@R=-0.5ex{
       \lan a_i, b_i, c_2 \; | \; 1{\leq}i{\leq}g, \; \text{$c_2$ is central} \ran
     &   \ar[l]   \lan r_1, r_2 \; | [r_1, r_2] \ran \ar[r]
     &   \lan d_2 \ran .
        \\  
     \prod_{1 \leq i \leq g} [a_i, b_i] 
     &     \ar@{|->}[l]  r_1  \ar@{|->}[r]
     &     d_2^{-nw_2}
     \\
     c_2
     & \ar@{|->}[l]  r_2  \ar@{|->}[r]
     &  d_2
   },
   \endxy
 \end{equation*}
 where the right-pointing homomorphism
 is derived from the formula for $\phi_2(V)\com\phi(U)_2^{-1}$.
 We obtain a presentation
 \begin{equation*}
    \pi_1(B_2) \iso \lan a_i, b_i, c_2, \; 1 \leq i \leq g \;
           | \; [a_i, c_2], [b_i, c_2] , \prod_{1 \leq i \leq g}[a_i,b_i]c_2^{nw_2} \ran.
 \end{equation*}
Finally, applying the fundamental group functor to the spaces in the middle column gives a diagram
 \begin{equation*}
   \xy \UCMT \xymatrix@C=2ex@R=-0.5ex{
      \lan a_i, b_i, m_1, m_2 \; | \; 1{\leq}i{\leq}g, \; \text{$m_1$, $m_2$ central} \ran
             &   \ar[l]   \lan r_1, r_2, r_3 \; | [r_i, r_j], \; i{\neq}j \ran  \ar[r]
             &   \lan d_1, d_2 \; | \;[d_1, d_2] \ran ,
                \\  
     \prod_{1 \leq i \leq g} [a_i, b_i] 
     &     \ar@{|->}[l]  r_1  \ar@{|->}[r]
     &     d_1^{-nw_1} d_2^{-nrw_2}
    \\  m_1
     &     \ar@{|->}[l] , r_2  \ar@{|->}[r]
     &     d_1
     \\
      m_2
     &     \ar@{|->}[l]   r_3  \ar@{|->}[r]
     &     d_2.
     }
   \endxy
 \end{equation*}
 Given $r_1 \in \pi_1(U{\cap}V)$ as above, $\bigl(\phi_{12}(V)\com \phi_{12}(U)^{-1}\bigr)_{\#}(r_1, e, e) = d_1^{-nw_1}d_2^{-nrw_2}$
 is the crucial bit of information, 
 and  we obtain a presentation
 \begin{multline*}
   \pi_1(B_1{\cap}B_2) \iso
   \\
   \lan a_i, b_i, m_1, m_2\;, 1\leq i \leq g
   \; | \;
   [a_i, m_1], [b_i, m_1], [a_i, m_2], [b_i, m_2], [m_1,m_2], \prod_{1 \leq i \leq g}[a_i,b_i]m_1^{nw1}m_2^{nrw_2} \ran. \qedhere
 \end{multline*}
\end{proof}
To complete the calculation of $\pi_1(M^3_g(n) \star_{\ell_1, \ell_2} S^3_{\bw})$,
we need to know how the presentation for $\pi_1(B_1{\cap}B_2)$ maps to
the presentation for $\pi_1(B_2)$, the homomorphism to $\pi_1(B_1)$ being straightforward to compute.
For this, and for later use, we record the following proposition.
\begin{proposition}
  \label{prop:gonpione}
  Define generators of $\pi_1(S^1{\times}S^1, (1,1))$, 
  letting $m_1$ denote the homotopy class of the loop $S^1 \ra S^1{\times}S^1$, $z \mapsto (z, 1)$ and
  letting $m_2$ denote the homotopy class of the loop $z \mapsto (1, z)$.
  Let $d_2$ denote the homotopy class of the loop $z \mapsto (1, z)$ in $\pi_1(D^2{\times}S^1)$.
  
  Using the K\"{u}nneth theorem and external products, define homology generators
  $t_1{\times}1$, $1{\times}t_2$ in $H_1(S^1{\times}S^1)$ and $1{\times}t'_2 \in H_1(D^2{\times}S^1)$.
  In cohomology take the dual classes, $T_1{\times}1$, $1{\times}T_2$ in $H^1(S^1{\times}S^1)$ and
  $1{\times}T'_2 \in H^1(D^2{\times}S^1)$.
  
  With $ g(x_1, x_2) = (x_1^{r(1-sw_1w_2)} x_2^{sw_1^2}, x_1^{-sw_2^2} x_2^{\ell_2})$, we compute
  \begin{equation}
    \label{eq:gonpione}
     g_{\#} \colon \pi_1(S^1{\times}S^1) \lra \pi_1(D^2{\times}S^1) \; \text{to be} \;  g_{\#}(m_1) = d_2^{-sw_2^2}, \quad g_{\#}(m_2) = d_2^{\ell_2}.
  \end{equation}
  Similarly, we compute
  \begin{equation*}
    g_* \colon H_1(S^1{\times}S^1) \lra H_1(D^2{\times}S^1) \; \text{to be} \;
    g_*(t_1{\times}1) = -sw_2^2(1{\times}t'_2), \quad g_*(1{\times}t_2) = \ell_2(1{\times}t'_2)
  \end{equation*}
  and
  \begin{equation}
    \label{eq:goncohomology}
    g^* \colon H^1(D^2{\times}S^1) \lra H^1(S^1{\times}S^1) \; \text{to be} \; g^*(1{\times}T'_2) = -sw_2^2(T_1{\times}1) + \ell_2(1{\times}T_2).
  \end{equation}
\end{proposition}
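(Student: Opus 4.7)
The proof proposal is direct, since $g$ is a monomial map coordinate-wise and the target $D^2\times S^1$ has a contractible first factor. The plan is to compute each of the three induced maps by evaluating on the chosen generators and using the fact that the first component of $g(x_1,x_2)$ lands in $D^2$, which is contractible and so is invisible to $\pi_1$, $H_1$, and $H^1$.

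For the $\pi_1$ computation, I would restrict $g$ to each of the coordinate circles $S^1\times\{1\}$ and $\{1\}\times S^1$ in $S^1\times S^1$. The loop representing $m_1$ is $z\mapsto (z,1)$, and $g(z,1) = (z^{r(1-sw_1w_2)},\,z^{-sw_2^2})$; composing with the deformation retraction $D^2\times S^1 \to \{0\}\times S^1$ collapses the first coordinate and leaves the loop $z\mapsto z^{-sw_2^2}$ in the $S^1$-factor, whose homotopy class is $d_2^{-sw_2^2}$. The identical computation for $m_2=[z\mapsto(1,z)]$ gives $g(1,z) = (z^{sw_1^2},\,z^{\ell_2})$, which retracts to $d_2^{\ell_2}$, yielding \eqref{eq:gonpione}.

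For the homology map $g_*$, the same argument applies: the Hurewicz map identifies $t_1\times 1$ and $1\times t_2$ with the classes of $m_1$ and $m_2$, and $1\times t'_2$ with the class of $d_2$, so the formulas for $g_*$ are the abelianized versions of those for $g_\#$. For the cohomology map $g^*$, I would use naturality of the Kronecker pairing (equivalently, dualization): evaluating
\[
 \langle g^*(1\times T'_2),\, t_1\times 1\rangle = \langle 1\times T'_2,\, g_*(t_1\times 1)\rangle = -sw_2^2,
\quad
 \langle g^*(1\times T'_2),\, 1\times t_2\rangle = \ell_2,
\]
and expanding $g^*(1\times T'_2)$ in the dual basis $\{T_1\times 1,\,1\times T_2\}$ gives formula \eqref{eq:goncohomology}.

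There is no real obstacle here; the work is purely bookkeeping once one observes that only the second coordinate of $g$ matters for the invariants of $D^2\times S^1$. The only point worth being careful about is consistency of sign conventions between the $\pi_1$ generators $m_1,m_2,d_2$ and the homology/cohomology generators $t_i,T_i,t'_2,T'_2$, which is ensured by choosing the counterclockwise parametrization of each $S^1$-factor uniformly throughout.
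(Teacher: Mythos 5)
Your proposal is correct and follows essentially the same route as the paper: evaluate $g$ on the loops representing $m_1$ and $m_2$, note that only the $S^1$-factor of $D^2{\times}S^1$ is seen by the invariants, and obtain the cohomology formula by naturality of the Kronecker pairing against the computed $g_*$. The explicit appeal to the retraction onto $\{0\}{\times}S^1$ and to the Hurewicz identification is just a slightly more spelled-out version of what the paper does implicitly.
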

\begin{proof}
  Represent a first preferred generator $m_1$ of $\pi_1(S^1{\times}S^1)$ by a parametrization
  $z \mapsto(z,1)$ of the circle
  $S^1{\times}\{1\}$.  Under $g$ this parametrization goes to the curve
  $z \mapsto (z^{r(1-sw_1w_2)}, z^{-sw_2^2})$,
  which is in the homotopy class of $d_2^{-sw_2^2}$,
  so we have $g_{\#}(m_1) = d_2^{-sw_2^2}$.

  Representing a second preferred generator $m_2$ of $\pi_1(S^1{\times}S^1)$ by a parametrization
  $z \mapsto(1,z)$ of the circle
  $\{1\}{\times}S^1$.  Under $g$ this parametrization goes to the curve
  $z \mapsto (z^{sw_1^2}, z^{\ell_2})$, so we have $g_{\#}(t_2) = d_2^{\ell_2}$.

  For the cohomology calculation
  $\langle g^*(1{\times}T'_2), t_1{\times}1 \rangle = \langle 1{\times}T'_2, g_*(t_1{\times}1) \rangle
  = \langle 1{\times}T'_2, -sw_2^2(1{\times}t'_2) \rangle = -sw_2^2$
  and
  $\langle g^*(1{\times}T'_2), 1{\times}t_2 \rangle = \langle 1{\times}T'_2, g_*(1{\times}t_2) \rangle
  = \langle 1{\times}T'_2, \ell_2(1{\times}t_2) \rangle = \ell_2$.
\end{proof}
\begin{theorem}
  \label{theorem:pioneM}
       A presentation of $\Gamma_1 = \pi_1(M^3_g(n) \star_{\ell_1, \ell_2} S^3_{\bw})$ is
        \begin{align}
     \Gamma_1 &\iso \lan a_i, b_i, c_1, c_2, 1 {\leq} i {\leq} g \;
                | \; [a_i, c_j], [b_i, c_j], \prod_{1 \leq i \leq g}[a_i, b_i]c_2^{nw_2}, c_1c_2^{sw_2^2}, c_2^{\ell_2} \ran
                   \label{eq:pioneMa}
   \\
            &\iso \lan a_i, b_i, c_2, 1 {\leq} i {\leq} g \;
              | \; [a_i , c_2], [b_i, c_2], \prod_{1 \leq i \leq g}[a_i, b_i]c_2^{nw_2}, c_2^{\ell_2} \ran
              \label{eq:pioneMb}
 \end{align}
        after eliminating $c_1$.
 \end{theorem}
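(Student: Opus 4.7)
The plan is to invoke the Seifert–van Kampen theorem applied to the decomposition $M = B_1 \cup B_2$ with intersection $B_1 \cap B_2$. All three fundamental groups have already been presented in Proposition \ref{prop:grouptheory1}, so the remaining work is to compute the inclusion-induced homomorphisms $i_1 \colon \pi_1(B_1 \cap B_2) \to \pi_1(B_1)$ and $i_2 \colon \pi_1(B_1 \cap B_2) \to \pi_1(B_2)$ on generators and then to assemble and simplify the amalgamated presentation.

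First I identify the maps on generators. For $i_1$, the inclusion is fiberwise the standard embedding $S^1 {\times} S^1 = S^1 {\times} \partial D^2 \hookrightarrow S^1 {\times} D^2$, so $i_1(m_1) = c_1$ while $i_1(m_2) = 1$ because the loop $z \mapsto (1,z)$ bounds $\{1\}{\times} D^2$ in the fiber. For $i_2$, the inclusion $B_1 \cap B_2 \hookrightarrow B_2$ is realized on fibers by the identification $g \colon S^1 {\times} S^1 \to \partial(D^2{\times}S^1) \hookrightarrow D^2{\times}S^1$, so by Proposition \ref{prop:gonpione} the map on $\pi_1$ sends $m_1 \mapsto c_2^{-sw_2^2}$ and $m_2 \mapsto c_2^{\ell_2}$. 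Because $a_i$ and $b_i$ come from surface generators in $\pi_1(U) \subset \pi_1(\Sigma_g)$ in all three groups, they are identified by both inclusions.

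Seifert–van Kampen then yields an amalgamated presentation generated by $a_i, b_i, c_1, c_2$, carrying all relations from $\pi_1(B_1)$ and $\pi_1(B_2)$ in Proposition \ref{prop:grouptheory1} together with the amalgamation relations $c_1 c_2^{sw_2^2} = 1$ coming from $m_1$ and $c_2^{\ell_2} = 1$ coming from $m_2$. This is already close to \eqref{eq:pioneMa}; what remains is to verify that the relation $\prod_{i}[a_i,b_i]\, c_1^{nw_1}$ inherited from $\pi_1(B_1)$ is redundant. Substituting $c_1 = c_2^{-sw_2^2}$ turns this into $\prod_i [a_i,b_i]\, c_2^{-nsw_1 w_2^2} = 1$, and using the $B_2$-relation $\prod_i[a_i,b_i] = c_2^{-nw_2}$ reduces it to $c_2^{-nw_2(1 + sw_1 w_2)} = 1$. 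Applying the Bezout identity \eqref{eq:relprime1} in the form $1 + s w_1 w_2 = r\ell_2$ shows this equals $c_2^{-nw_2 r\ell_2}$, which is trivial by virtue of $c_2^{\ell_2} = 1$. This redundancy check, genuinely requiring the auxiliary integers $r,s$, is the main substantive step.

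Finally, presentation \eqref{eq:pioneMb} is obtained from \eqref{eq:pioneMa} by eliminating $c_1 = c_2^{-sw_2^2}$ via a Tietze transformation. The commutators $[a_i, c_1]$ and $[b_i, c_1]$ then follow automatically from $[a_i, c_2]$ and $[b_i, c_2]$, so they disappear, leaving the claimed presentation. I expect the only potentially delicate point is bookkeeping with the two distinct copies of the surface generators and verifying that the amalgamation really identifies them correctly, which is ensured by the observation that $U \times (S^1 {\times} S^1) \to U \times (S^1 {\times} D^2)$ and $U \times (S^1 {\times} S^1) \xrightarrow{\id \times g} U \times (D^2 {\times} S^1)$ both induce the identity on the $\pi_1(U)$-factor.
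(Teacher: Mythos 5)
Your proposal is correct and follows essentially the same route as the paper: Seifert--van Kampen applied to $M = B_1 \cup B_2$, using the presentations of Proposition \ref{prop:grouptheory1} and the homomorphisms $m_1 \mapsto c_1$, $m_2 \mapsto e$ into $\pi_1(B_1)$ and $m_1 \mapsto c_2^{-sw_2^2}$, $m_2 \mapsto c_2^{\ell_2}$ into $\pi_1(B_2)$ from Proposition \ref{prop:gonpione}, followed by elimination of $c_1$. Your explicit check that the inherited relation $\prod_i[a_i,b_i]c_1^{nw_1}$ becomes redundant via $1 + sw_1w_2 = r\ell_2$ is a detail the paper leaves implicit, and it is carried out correctly.
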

 \begin{proof}
 Apply the Seifert-van Kampen theorem to the  presentations for the groups
 \begin{equation*}
   \pi_1(B_1) \lla \pi_1(B_1 {\cap} B_2) \lra \pi_1(B_2),
 \end{equation*}
 diagrammed as
 \begin{multline*}
    \lan a_i, b_i, c_1, \; 1 \leq i \leq g \;
           | \; [a_i, c_1], [b_i, c_1] , \prod_{1 \leq i \leq g}[a_i,b_i]c_1^{nw_1} \ran
   \\
   \lla
   \lan a_i, b_i, m_1, m_2\;, 1\leq i \leq g
   \; | \;
   [a_i, m_1], [b_i, m_1], [a_i, m_2], [b_i, m_2], [m_1,m_2], \prod_{1 \leq i \leq g}[a_i,b_i]m_1^{nw1}m_2^{nrw_2} \ran
   \lra
   \\
   \lan a_i, b_i, c_2, \; 1 \leq i \leq g \;
           | \; [a_i, c_2], [b_i, c_2] , \prod_{1 \leq i \leq g}[a_i,b_i]c_2^{nw_2} \ran.
 \end{multline*}
The homomorphism $   \pi_1(B_1{\cap}B_2)  \ra \pi_1(B_1)$ is induced by inclusions and a check of the definitions yields
\begin{equation}
  \label{SvK1}
a_i \mapsto a_i, \quad b_i \mapsto b_i, \quad \text{for $1{\leq}i{\leq}g$,}  \quad m_1 \mapsto c_1, \quad m_2 \mapsto e.
 \end{equation}
 The homomorphism $ \pi_1(B_1{\cap}B_2)  \ra \pi_1(B_2) $   covers the identity on $\pi_1(\Sigma_g)$,
 but the fibers of the projection are mapped by a ``twist.'' By application of \eqref{eq:gonpione} from Lemma \ref{prop:gonpione} we have
 \begin{equation}
   \label{SvK2}
a_i \mapsto a_i, \quad b_i \mapsto b_i, \quad \text{for $1{\leq}i{\leq}g$,}\quad  m_1 \mapsto c_2^{-sw_2^2}, \quad m_2 \mapsto c_2^{\ell_2}.
 \end{equation}
It is easy to verify
 \begin{align*}
  \prod_{1 \leq i \leq g}[a_i,b_i]m_1^{nw1}m_2^{nrw_2} &\mapsto \prod_{1 \leq i \leq g}[a_i,b_i]c_1^{nw_1}
      \intertext{under the homomorphism to $\pi_1(B_1)$, and,
     since $r\ell_2{-}sw_1w_2{=}1$ is hypothesized in \eqref{eq:relprime1},}
   \prod_{1 \leq i \leq g}[a_i,b_i]m_1^{nw1}m_2^{nrw_2} &\mapsto \prod_{1 \leq i \leq g}[a_i,b_i]c_2^{nw_2(-sw_1w_2)}c_2^{nw_2(r\ell_2)}
   = \prod_{1 \leq i \leq g}[a_i,b_i]c_2^{nw_2},
 \end{align*}
 under the homomorphism to $\pi_1(B_2)$.
  That is, the major relations map compatibly.  
 Finally,  a presentation of $\Gamma_1 = \pi_1(M^3_g(n) \star_{\ell_1, \ell_2} S^3_{\bw})$ is
 \begin{align*}
   \Gamma_1 &\iso \lan a_i, b_i, c_1, c_2, 1 {\leq} i {\leq} g \;
              | \; [a_i, c_j], [b_i, c_j], \prod_{1 \leq i \leq g}[a_i, b_i]c_2^{nw_2}, c_1c_2^{sw_2^2}, c_2^{\ell_2} \ran
   \\
            &\iso \lan a_i, b_i, c_2, 1 {\leq} i {\leq} g \;
              | \; [a_i , c_2], [b_i, c_2], \prod_{1 \leq i \leq g}[a_i, b_i]c_2^{nw_2}, c_2^{\ell_2} \ran
 \end{align*}
 after eliminating $c_1$.   
 \end{proof}
   To sum up, we clearly have that $c_2$ generates a central subgroup.
   Setting $a_i={e}$, $b_i{=}e$, we can map $\Gamma_1$ to a group with one generator $c$ and
   two relations $c^{nw_2}$, $c^{\ell_2}$.  Since $w_2$ and $\ell_2$ are relatively prime by \eqref{eq:relprime1}, this group must be cyclic of order
   $\gcd( n, \ell_2)=d$.
   \begin{lemma}
     \label{lemma:h1torsion}
     The torsion subgroup of $H_1(B_1{\cap}B_2;\bZ)$ is isomorphic to $\bZ/n\bZ$ and the inclusion-induced homomorphism
     $H_1(B_1{\cap}B_2;\bZ) \ra H_1(B_1; \bZ)$ restricted to torsion subgroups can be identified with
     $\bZ/n\bZ \ra \bZ/nw_1\bZ$, $1 \mapsto w_1$.  Likewise, for $H_1(B_1{\cap}B_2;\bZ) \ra H_1(B_2; \bZ)$, the map on
     torsion can be identified with $\bZ/n\bZ \ra \bZ/nw_2\bZ$, $1 \mapsto w_2$. 
   \end{lemma}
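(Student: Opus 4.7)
The plan is to compute each $H_1$ by abelianizing the presentations supplied by Proposition \ref{prop:grouptheory1}, pin down an explicit generator of the torsion subgroup of $H_1(B_1{\cap}B_2;\bZ)$, and then push that generator forward through the inclusion-induced homomorphisms using the formulas \eqref{SvK1} and \eqref{SvK2}.

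Abelianizing \eqref{eq:pioneB1capB2} kills the products of commutators, so
\begin{equation*}
  H_1(B_1{\cap}B_2;\bZ) \iso \bZ^{2g} \dsm \bigl( \bZ^2 / \bZ{\cdot}(nw_1, nrw_2) \bigr),
\end{equation*}
with $\bZ^{2g}$ spanned by the $a_i$, $b_i$ and the rank-two summand spanned by $m_1$, $m_2$. The key observation is that $(w_1, rw_2)$ is a primitive vector of $\bZ^2$: the standing hypothesis $\gcd(w_1,w_2){=}1$ (from $\ell_1{=}1$) combined with $r\ell_2{-}sw_1w_2{=}1$ of \eqref{eq:relprime1}, reduced modulo $w_1$, gives $\gcd(r,w_1){=}1$, hence $\gcd(w_1, rw_2){=}1$. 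Thus the relation vector $(nw_1, nrw_2) = n(w_1, rw_2)$ is $n$ times a primitive vector, so the rank-two summand splits as $\bZ \dsm \bZ/n\bZ$, and the torsion part $\bZ/n\bZ$ is generated by the class of $w_1 m_1 + rw_2 m_2$.

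For the targets, abelianizing \eqref{eq:pioneB1} and \eqref{eq:pioneB2} gives $H_1(B_1;\bZ) \iso \bZ^{2g}\dsm \bZ/(nw_1)\bZ$ with torsion generated by $c_1$, and $H_1(B_2;\bZ)\iso \bZ^{2g}\dsm \bZ/(nw_2)\bZ$ with torsion generated by $c_2$. Pushing the torsion generator $w_1 m_1 + rw_2 m_2$ through the map to $\pi_1(B_1)^{ab}$ given by \eqref{SvK1} (namely $m_1 \mapsto c_1$, $m_2 \mapsto e$) produces $w_1 c_1$, establishing the first claim. Pushing it through the map to $\pi_1(B_2)^{ab}$ given by \eqref{SvK2} (namely $m_1 \mapsto c_2^{-sw_2^2}$, $m_2 \mapsto c_2^{\ell_2}$) produces
\begin{equation*}
  (-sw_1w_2^2 + r\ell_2 w_2)\,c_2 = w_2(r\ell_2 - sw_1w_2)\,c_2 = w_2 c_2,
\end{equation*}
again by \eqref{eq:relprime1}, establishing the second claim.

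The only non-routine step is the primitivity of $(w_1, rw_2)$, but this is handed to us by the relation $r\ell_2 - sw_1 w_2 = 1$; once primitivity is in hand, identification of the torsion subgroup and the subsequent push-forwards are mechanical, and the conspicuous reappearance of the factor $r\ell_2 - sw_1w_2$ in the computation for $B_2$ is exactly what one should hope for.
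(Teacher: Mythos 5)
Your proof is correct and follows essentially the same route as the paper: abelianize the presentations from Proposition \ref{prop:grouptheory1}, identify the torsion of $H_1(B_1{\cap}B_2;\bZ)$ as $\bZ/n\bZ$ generated by the class of $w_1m_1{+}rw_2m_2$, and push that class forward via \eqref{SvK1} and \eqref{SvK2}, using $r\ell_2{-}sw_1w_2{=}1$ to simplify. The only cosmetic difference is that you justify the splitting $\bZ^2/\lan(nw_1,nrw_2)\ran \iso \bZ/n\bZ{\dsm}\bZ$ by primitivity of $(w_1,rw_2)$, whereas the paper writes down an explicit Bezout-coefficient isomorphism $\rho$; both yield the same torsion generator.
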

   \begin{proof}
     Abelianization of the presentation of the fundamental
  group yields first of all
  \begin{equation*}
    H_1(B_1{\cap}B_2; \bZ) \iso \bZ^2/\lan (nw_1, nrw_2) \ran \dsm \bZ^{2g}.
  \end{equation*}
  The task is to identify the subgroup $\bZ^2/\lan (nw_1, nrw_2) \ran$ with $\bZ/n\bZ \dsm \bZ$.
  Whereas the homology classes represented by the homotopy classes $\{a_i, b_i \;| \;1 \leq i \leq g \}$
  generate $\bZ^{2g}$, the subgroup $\bZ^2/\lan (nw_1, nrw_2) \ran$ has generators represented
  by the central homotopy classes $m_1$ and $m_2$. We have assumed
  $w_1$ and $w_2$ are relatively prime, and the identity $r\ell_2 {-} sw_1w_2 = 1$ implies
   $w_1$ and $rw_2$ are relatively prime.  So we can choose integers $u_1$ and $u_2$ such that
\begin{equation*}
  u_1w_1{+}u_2rw_2{=}1.
\end{equation*}
The map
$\rho \colon \alpha m_1{+}\beta m_2 \mapsto (u_1\alpha{+}u_2\beta, -rw_2\alpha{+} w_1\beta) \in \bZ/n\bZ{\dsm}\bZ$,
delivers an isomorphism
\begin{equation*}
\bZ^2/\lan (nw_1, nrw_2) \ran \stackrel{\iso}{\lra} \bZ/n\bZ \dsm \bZ.
\end{equation*}
Clearly this map vanishes on  $(nw_1, nrw_2)=nw_1m_1{+}nrw_2m_2$ and, if $\alpha m_1{+} \beta m_2$ is in the kernel,
then solving the system
\begin{equation*}
  u_1\alpha + u_2\beta = \gamma n, \quad -rw_2\alpha + w_1 \beta = 0
  \quad \text{yields} \quad
  \alpha = nw_1\gamma, \; \beta = nrw_2 \gamma
 \end{equation*}
 which verifies the claim.

 Observe that $\rho(w_1 m_1 {+} rw_2 m_2) = (1,0)$, the torsion generator,  and, incidentally,  $\rho(-u_2m_1{+}u_1m_2) = (0,1)$.
 In the abelianization of $\pi_1(B_1{\cap}B_2) \ra \pi_1(B_1)$, we have $m_1\mapsto c_1$, $m_2 \mapsto 0$,
 and in the abelianization of $\pi_1(B_1{\cap}B_2) \ra \pi_1(B_2)$ we have $m_1 \mapsto -sw_2^2c_2$, $m_2 \mapsto \ell_2c_2$,
 depicted in the diagram as  multiplications by $1$-by-$2$ matrices.
 The homomorphisms $\bZ \ra \bZ/nw_i\bZ$ at the left and right are the canonical maps.
 \begin{equation*}
   \xy \UCMT \xymatrix@R-2ex{
     \bZ \ar[d] & \ar_{(1 \; 0)}[l]  \bZ \dsm \bZ \ar^{(-sw_2^2 \; \ell_2)}[r] \ar^{\rho}[d] & \bZ \ar[d]
     \\
     \bZ/nw_1\bZ \ar[d] &  \ar[l] \bZ/n\bZ \dsm \bZ \ar[r] \ar[d] & \bZ/nw_2\bZ \ar[d]
     \\
     H_1(B_1; \bZ) & \ar[l]  H_1(B_1{\cap}B_2;\bZ) \ar[r] & H_1(B_2;\bZ)
   }
   \endxy
 \end{equation*}
 Evaluating on $w_1m_1{+}rw_2m_2$, we have, respectively
 \begin{equation*}
   \begin{pmatrix}
     1 & 0 
   \end{pmatrix}
\cdot
   \begin{pmatrix}
     w_1 \\ rw_2
   \end{pmatrix}
   = w_1
   \quad
   \text{and}
   \quad
      \begin{pmatrix}
    -sw_2^2  &  \ell_2
   \end{pmatrix}
\cdot
   \begin{pmatrix}
     w_1 \\ rw_2
   \end{pmatrix}
   = w_2(-sw_1w_2 + \ell_2r) = w_2. \qedhere
 \end{equation*}
   \end{proof}
\section{Gysin sequences of $B_1$ and $B_2$}\label{gysinsequences}
We have observed that the spaces $B_1$ and $B_2$ are the total spaces of $D^2$-bundles over three-manifolds $C_1^3$ and $C_2^3$.
One goal of this section is to identify the Euler classes of these disc bundles.
For this we exploit the fact that the Thom isomorphism
$\Phi \colon H^q(C_i^3 ; \bZ) \ra H^{q+2}(B_i, \partial B_i ; \bZ)$
relates the respective Gysin sequences to the cohomology sequences of the pairs.
In turn, we can compare Mayer-Vietoris sequences derived from the
gluing data to compute the groups and maps in the cohomology sequences of the pairs.
Abbreviating $M = M^3_g(n) \star_{\ell_1, \ell_2} S^3_{\bw} $, we will use the computations in
subsections \ref{cohoB1bdyB1} and   \ref{cohoB2bdyB2}
to compute $H^2(M;\bZ)$ and $H^3(M;\bZ)$ in subsection \ref{middledimensions}.
Another formulation of these computations appears when we compute the associated linking forms in
Section \ref{linking}.

\subsection{Cohomology of $B_1$ and $\partial B_1$}
\label{cohoB1bdyB1}
  First we partially compute the groups and homomorphisms in the long exact cohomology  sequence
  of the pair $(B_1, \partial B_1) = (B_1, B_1{\cap}B_2)$. 
  With integer coefficients,
  this long exact sequence is isomorphic to the Gysin sequence of the bundle pair $(B_1, \partial B_1) \ra C_1^3$.  
  We determine  the Euler class associated with this bundle pair in Proposition \ref{prop:Eulerp1}.
  It is then possible to evaluate the remaining groups and homomorphisms in both exact sequences.
  \begin{proposition}
    \label{prop:allh1B1}
    We have
    \begin{equation}
      \label{eq:B1homology1}
      H_1(B_1{\cap}B_2; \bZ) \iso \bZ/n\bZ \dsm \bZ \dsm \bZ^{2g}
      \quad \text{and} \quad
      H_1(B_1; \bZ) \iso \bZ/nw_1\bZ \dsm \bZ^{2g}.
    \end{equation}
    The inclusion-induced homomorphism $H_1(B_1{\cap}B_2;\bZ) \ra H_1(B_1;\bZ)$ may be represented
    by
    \begin{equation*}
      \begin{pmatrix}  0_{2g,1} &  I_{2g}     \end{pmatrix}
      \colon
      \bZ \dsm \bZ^{2g} \ra \bZ^{2g}
      \quad \text{and} \quad
      \bZ/n\bZ \ra \bZ/nw_1\bZ, \quad 1 \mapsto w_1,
    \end{equation*}
    on the torsion free parts of $H_1$ and on the torsion parts, respectively.

    In cohomology, we have
    \begin{equation}
      \label{eq:B1coho1}
H^1(B_1;\bZ) \iso \bZ^{2g}, \quad H^1(\partial B_1; \bZ) \iso \bZ{\dsm}\bZ^{2g},      
\end{equation}
and the restriction homomorphism may be represented by
\begin{equation*}
  \begin{pmatrix}    0_{1, 2g} \\ \Id_{2g}   \end{pmatrix}
  \colon \bZ^{2g} \ra \bZ{\dsm}\bZ^{2g}
\end{equation*}
  \end{proposition}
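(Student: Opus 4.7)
The plan is to extend the torsion calculation of Lemma \ref{lemma:h1torsion} to obtain the full homology groups, then dualize via the universal coefficient theorem to obtain the cohomology statement. First, I would abelianize the presentations given in Proposition \ref{prop:grouptheory1}. For $H_1(B_1;\bZ)$, the abelianized presentation immediately yields $\bZ^{2g} \dsm \bZ/nw_1\bZ$, with $c_1$ generating the torsion summand; this also matches the answer derivable from Proposition \ref{prop:C13cohomology} via universal coefficients. For $H_1(B_1 \cap B_2;\bZ)$, the commutator relations vanish under abelianization, leaving a free part $\bZ^{2g}$ generated by $\{a_i, b_i\}$ together with $\bZ^2/\langle(nw_1, nrw_2)\rangle$ generated by $m_1, m_2$; the isomorphism $\rho$ from Lemma \ref{lemma:h1torsion} identifies the latter with $\bZ/n\bZ \dsm \bZ$, completing the first formula.

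Next, for the inclusion map on $H_1$, I would apply the Seifert--van Kampen formula \eqref{SvK1}, under which $a_i \mapsto a_i$, $b_i \mapsto b_i$, $m_1 \mapsto c_1$, and $m_2 \mapsto e$. The induced map on the $\bZ^{2g}$ summand is the identity $\Id_{2g}$. The torsion generator of $\bZ^2/\langle(nw_1, nrw_2)\rangle$ corresponds to $w_1 m_1 + r w_2 m_2$, which maps to $w_1 c_1$ and recovers the $1 \mapsto w_1$ formula from Lemma \ref{lemma:h1torsion}. The torsion-free generator corresponds to $-u_2 m_1 + u_1 m_2$, which maps to $-u_2 c_1$, an element lying entirely inside the torsion subgroup $\bZ/nw_1\bZ \subset H_1(B_1)$; hence its component in the $\bZ^{2g}$ summand vanishes, producing the $0_{2g,1}$ block in the claimed matrix.

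Finally, the universal coefficient theorem gives $H^1(X;\bZ) \iso \Hom(H_1(X;\bZ), \bZ)$, so dualizing \eqref{eq:B1homology1} yields $H^1(B_1;\bZ) \iso \bZ^{2g}$ and $H^1(\partial B_1;\bZ) \iso \bZ \dsm \bZ^{2g}$; the restriction homomorphism is then the transpose of the inclusion-induced map on the torsion-free quotient of $H_1$, which produces the claimed column vector. The main obstacle is a bookkeeping step: one must verify that the torsion-free generator $-u_2 m_1 + u_1 m_2$ of $H_1(B_1 \cap B_2)$ indeed maps into the torsion subgroup of $H_1(B_1)$ rather than contributing to the $\bZ^{2g}$ summand. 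This is the structural fact producing the zero block, and it follows directly from the identities $r\ell_2 - sw_1 w_2 = 1$ and $u_1 w_1 + u_2 r w_2 = 1$ already exploited in Lemma \ref{lemma:h1torsion}.
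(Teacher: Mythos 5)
Your proposal is correct and follows essentially the same route as the paper: abelianize the presentations (equivalently, apply the Hurewicz map to the data in \eqref{SvK1}), invoke Lemma \ref{lemma:h1torsion} for the torsion identification and the $1 \mapsto w_1$ map, observe that the free generator $-u_2 m_1 + u_1 m_2$ lands in the torsion subgroup of $H_1(B_1;\bZ)$ (giving the zero block), and dualize via the universal coefficient theorem. Your explicit check of the image of $-u_2 m_1 + u_1 m_2$ is exactly the bookkeeping the paper delegates to the proof of Lemma \ref{lemma:h1torsion}.
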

  \begin{proof}
    Apply the Hurewicz homomorphism to the information on $\pi_1(B_1{\cap}B_2) \ra \pi_1(B_1)$ in \eqref{SvK1}, and obtain
    \begin{equation*}
      a_i \mapsto a_i, \quad  b_i \mapsto b_i, \quad \text{for $1{\leq}i{\leq}g$.}
       \end{equation*}
       Lemma \ref{lemma:h1torsion} computes the map on torsion subgroups, and the proof indicates that the remaining infinite
       cyclic summand of $H_1(B_1{\cap}B_2; \bZ)$ maps only to the torsion subgroup of $H_1(B_1;\bZ)$. 
      
    The cohomology assertions follow from the universal coefficient theorem.
  \end{proof}
  We also observe the following corollary.
  \begin{corollary}
    \label{cor:coho2torsionB1}
    In $H^2(B_1; \bZ) \ra H^2(\partial B_1; \bZ)$ the map on torsion subgroups is
    \begin{equation*}
      \bZ/nw_1\bZ \stackrel{{\rm reduction}}{\lra} \bZ/n\bZ.
    \end{equation*}
  \end{corollary}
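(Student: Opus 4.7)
The plan is to deduce this as a direct consequence of the universal coefficient theorem for cohomology, together with the information on $H_1$-torsion maps already recorded in Lemma \ref{lemma:h1torsion} (equivalently, Proposition \ref{prop:allh1B1}). For any space $X$ with finitely generated integral homology, the natural short exact sequence
\begin{equation*}
0 \lra \Ext(H_1(X;\bZ), \bZ) \lra H^2(X; \bZ) \lra \Hom(H_2(X; \bZ), \bZ) \lra 0
\end{equation*}
has torsion-free quotient, so the torsion subgroup of $H^2(X;\bZ)$ is naturally identified with $\Ext(\Tor H_1(X;\bZ), \bZ)$. Under the standard identification $\Ext(\bZ/k\bZ, \bZ) \iso \Hom(\bZ/k\bZ, \bQ/\bZ) \iso \bZ/k\bZ$, this recovers the torsion orders $nw_1$ and $n$ for $H^2(B_1;\bZ)$ and $H^2(\partial B_1;\bZ)$ respectively, matching Proposition \ref{prop:allh1B1}.

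Next I would invoke the naturality of UCT with respect to the inclusion $j \colon \partial B_1 \lra B_1$: the restriction of $j^*$ to torsion subgroups agrees with $\Ext(j_*, \bZ)$, where $j_* \colon H_1(\partial B_1;\bZ) \lra H_1(B_1;\bZ)$. By Lemma \ref{lemma:h1torsion}, $j_*$ restricted to torsion is $\bZ/n\bZ \lra \bZ/nw_1\bZ$, $[1] \mapsto [w_1]$. Dualizing under the Pontryagin identification, a homomorphism $\phi \colon \bZ/nw_1\bZ \lra \bQ/\bZ$ with $\phi([1]) = 1/(nw_1)$ pulls back along $j_*$ to the homomorphism $\bZ/n\bZ \lra \bQ/\bZ$ sending $[1]$ to $w_1/(nw_1) = 1/n$, which is the generator of $\frac{1}{n}\bZ/\bZ$. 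This identifies $\Ext(j_*,\bZ)$ with the canonical reduction map $\bZ/nw_1\bZ \lra \bZ/n\bZ$, $[x]_{nw_1} \mapsto [x]_n$, as claimed.

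The only point requiring care is the correct choice of isomorphism $\Ext(\bZ/k\bZ, \bZ) \iso \bZ/k\bZ$: the Pontryagin dual identification is the one that turns the induced map into reduction, whereas a naive identification via the resolution $\bZ \xrightarrow{k} \bZ$ would present the same map as multiplication by $w_1$. Beyond this bookkeeping, there is no real obstacle; the substantive geometric input sits in Lemma \ref{lemma:h1torsion}, and the corollary is a formal consequence of UCT naturality.
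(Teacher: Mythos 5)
Your argument is correct and is essentially the paper's own proof: both reduce the statement, via naturality of the universal coefficient sequence, to computing $\Ext(j_*,\bZ)$ for the inclusion-induced torsion map $\bZ/n\bZ \ra \bZ/nw_1\bZ$, $1 \mapsto w_1$, supplied by Lemma \ref{lemma:h1torsion}; you evaluate this $\Ext$ through the identification $\Ext(\bZ/k\bZ,\bZ)\iso\Hom(\bZ/k\bZ,\bQ/\bZ)$, while the paper dualizes the resolutions $\bZ \xrightarrow{n} \bZ$ and $\bZ \xrightarrow{nw_1} \bZ$ --- the same formal computation in two guises. One correction to your closing caveat: the resolution computation, done correctly, does not present the map as multiplication by $w_1$. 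The chain map covering $1 \mapsto w_1$ is multiplication by $w_1$ in degree zero but the identity in degree one, and it is the degree-one dual that computes the induced map on $\Ext$, so the resolution route also yields reduction (this is precisely the diagram in the paper's proof). Indeed ``multiplication by $w_1$'' could not be a re-presentation of the same homomorphism under different identifications: when $\gcd(n,w_1)>1$ it fails to be surjective, whereas reduction is.
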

  \begin{proof}
    By the universal coefficient theorem
    $H^2(B_1; \bZ) \ra H^2(\partial B_1; \bZ)$
    restricted to torsion subgroups is
    \begin{equation*}
      \Ext( \bZ/nw_1 \bZ, \bZ) \lra \Ext( \bZ/n\bZ , \bZ).
    \end{equation*}
    Applying $\Hom(-, \bZ)$ to the diagram
    \begin{equation*}
      \xy \UCMT \xymatrix@R=1em{
0 \ar[r] & \bZ  \ar^{\cdot n}[r] \ar[d] & \bZ \ar[r] \ar^{\cdot w_1}[d] & \bZ/n\bZ \ar[r] \ar[d] & 0
\\
0 \ar[r] & \bZ  \ar^{\cdot nw_1}[r] & \bZ \ar[r] & \bZ/nw_1\bZ \ar[r] & 0
}
      \endxy
    \end{equation*}
    produces
    \begin{equation*}
      \xy \UCMT \xymatrix@R=1em{
        0 \ar[r] & \Hom(\bZ/nw_1\bZ, \bZ) = 0 \ar[r] \ar[d] & \bZ \ar^{\cdot nw_1}[r] \ar^{\cdot w_1}[d] & \bZ \ar[r] \ar^{\id}[d]
                      & \Ext( \bZ/nw_1\bZ, \bZ) \ar[r] \ar[d] & 0
          \\
          0 \ar[r] & \Hom(\bZ/n\bZ, \bZ) = 0 \ar[r] & \bZ \ar^{\cdot n}[r] & \bZ \ar[r]
                 & \Ext(\bZ/n\bZ, \bZ) \ar[r] & 0,
      }
      \endxy
    \end{equation*}
    and the assertion on torsion follows easily. 
  \end{proof}
To obtain results in higher dimensions, we compare the Mayer-Vietoris sequences associated with the diagram of trivializations
\begin{equation} \label{B1bdyB1comparo}
  \xy \UCMT \xymatrix@C+3em{
     U \times (S^1{\times}S^1)  \ar^{\id \times i}[d]
    &  \ar_{i_0}[l]  (U{\cap}V) \times (S^1{\times}S^1)  \ar^{\id \times i}[d] \ar^(0.55){\phi_{12}(V)\com \phi_{12}(U)^{-1}}[r]
    &  V \times (S^1{\times}S^1) \ar^{\id \times i}[d]
 \\
   U \times (S^1{\times}D^2)
         &  \ar_{i'_0}[l] (U{\cap}V) \times (S^1{\times}D^2) \ar^(0.55){\phi_1(V)\com \phi_1(U)^{-1}}[r]
         & V \times (S^1{\times}D^2) 
  }
  \endxy
\end{equation}
We write $i \colon S^1{\times}S^1 \ra S^1{\times}D^2$ for the standard inclusion,
$i_0$ and $i_0'$ for the left-pointing inclusions in diagram~\eqref{B1bdyB1comparo},
and
\begin{align*}
  i_1{=}\phi_{12}(V)\com \phi_{12}(U)^{-1} &\colon (U{\cap}V) \times (S^1{\times}S^1) \ra V \times (S^1{\times}S^1)
  \intertext{and}
 i'_1{=}\phi_1(V)\com \phi_1(U)^{-1} &\colon (U{\cap}V) \times (S^1{\times}D^2) \ra  V \times (S^1{\times}D^2)  
\end{align*}
for the right-pointing maps in diagram \eqref{B1bdyB1comparo}.
Comparable segments of the Mayer-Vietoris sequences are diagrammed as follows.
\begin{equation}
  \label{diag:B1bdyB1MVcomparo}
  \xy \UCMT \xymatrix@C-0.5ex{
    H^q(B_1) \ar^(0.25){\Bigl(
      \begin{smallmatrix}
        j_0'^* \\ j_1'^*
      \end{smallmatrix}
      \Bigr)}
      [r] \ar[d]
      & H^q\bigl( U {\times} (S^1{\times}D^2)\bigr) {\dsm} H^q\bigl(V {\times} (S^1{\times}D^2)\bigr)
      \ar^(0.60){\bigl( i_0'^* , -i_1'^*  \bigr)}[r]
          \ar_{(1\times i)^*\dsm (1\times i)^*}[d]
          & H^q\bigl( (U{\cap}V) {\times} (S^1{\times}D^2)\bigr) \ar[r]
          \ar_{(1 \times i)^*}[d]
    & H^{q+1}(B_1) \ar[d]
    \\
    H^q(\partial B_1) \ar^(0.25){\Bigl(
      \begin{smallmatrix}
        j_0^* \\ j_1^*
      \end{smallmatrix}
      \Bigr)}[r]
    & H^q\bigl( U {\times} (S^1{\times}S^1)\bigr) {\dsm} H^q\bigl(V {\times} (S^1{\times}S^1)\bigr)
    \ar^(0.60){\bigl(  i_0^* , -i_1^*  \bigr)}[r]
    & H^q\bigl( (U{\cap}V) {\times} (S^1{\times}S^1)\bigr) \ar[r] 
    & H^{q+1}(\partial B_1).
  }
  \endxy
\end{equation}
We continue thinking of components of a direct sum arranged as a column vector, so
$ \bigl( \begin{smallmatrix}   j_0^* \\ j_1^* \end{smallmatrix} \bigr)$
represents a map into a direct sum, whereas
$(i_0^*, -i_1^*)$ represents a map out of a direct sum.
Then the task is to identify the maps in the diagram
\begin{equation}
  \label{diag:B1bdyB1}
  \xy \UCMT \xymatrix{
0 \ar[r] & \Coker^{q-1}{(  i_0'^*,  -i_1'^* )} \ar[r]  \ar[d]
                   & H^q(B_1) \ar[r]  \ar[d]
                   & \Ker^q{ (  i_0'^* , -i_1'^*  )} \ar[r]  \ar[d]
                   & 0
                   \\
0 \ar[r] & \Coker^{q-1}{(  i_0^* , -i_1^* )} \ar[r]
                   & H^q(\partial B_1) \ar[r] 
                   & \Ker^q{( i_0^* , -i_1^* )} \ar[r]
                   & 0
                 }
  \endxy
\end{equation}
For the calculations, we adopt the following conventions in order to compute in the Mayer-Vietoris sequence.
By the K\"{u}nneth theorem with integer coefficients throughout,
\begin{align}
  H^q(U{\times}(S^1{\times}D^2) &\iso \dsm_{i+j+k=q} H^i(U){\ten}H^j(S^1){\ten}H^k(D^2); \notag
    \\
  H^q(V{\times}(S^1{\times}D^2) &\iso \dsm_{i+j+k=q} H^i(V){\ten}H^j(S^1){\ten}H^k(D^2);  \label{sonedtwo}
  \\
  H^q((U{\cap}V) {\times}(S^1{\times}D^2) &\iso \dsm_{i+j+k=q} H^i(U{\cap}V){\ten}H^j(S^1){\ten}H^k(D^2); \notag 
 \intertext{and}
  H^q(U{\times}(S^1{\times}S^1) &\iso \dsm_{i+j+k=q} H^i(U){\ten}H^j(S^1){\ten}H^k(S^1); \notag
  \\
  H^q(V{\times}(S^1{\times}S^1) &\iso \dsm_{i+j+k=q} H^i(V){\ten}H^j(S^1){\ten}H^k(S^1); \label{sonesone}
  \\
   H^q((U{\cap}V){\times}(S^1{\times}S^1)&\iso \dsm_{i+j+k=q} H^i(U{\cap}V){\ten}H^j(S^1){\ten}H^k(S^1). \notag
\end{align}
 As in Proposition \ref{B1and2},  we used the homology and cohomology cross products to
  take $t'_1{\times}1$ and $T'_1{\times}1$ to be preferred bases for $H_1(S^1{\times}D^2)$ and $H^1(S^1{\times}D^2)$,
  respectively.
Extend this to define a preferred basis $t_1{\times}1$ and $1{\times}t_2$
 for $H_1(S^1{\times}S^1)$ and a dual basis $T_1{\times}1$ and $1{\times}T_2$ for $H^1(S^1{\times}S^1)$.
 As earlier, $\rho$ represents the preferred generator of $H_1(U{\cap}V)$ and $R$ the dual generator of $H^1(U{\cap}V)$.
 We let $\{a_i , b_i \; | \; 1 \leq i \leq 2g \}$ denote our standard basis for $H_1(U)$ and
 $\{ A_i, B_i \; | 1 \leq i \leq 2g \}$ denote the dual basis of $H^1(U)$.
 
In terms of the homology and cohomology classes defined above,
  \begin{equation} \label{eq:rowconnector1}
    i_*(t_1{\times}1) = t_1'{\times}1, \quad i_*(1{\times}t_2) = 0,
    \quad i^*(T_1'{\times}1) = T_1{\times}1, \quad i^*(1{\times}T_2') = 0,
  \end{equation}
  are the identities we use to develop the connections between the rows of diagrams
  \eqref{diag:B1bdyB1MVcomparo} and \eqref{diag:B1bdyB1}.
  \begin{proposition} \label{prop:h2torsionfreeB1}
    The restriction $H^2(B_1) \ra H^2(\partial B_1)$ restricted to torsion-free subgroups is
    is injective and can be identified with
    \begin{equation*}
      \begin{pmatrix}  \Id_{2g} \\ 0  \end{pmatrix}
      \colon \bZ^{2g} \lra \bZ^{2g} \dsm \bZ^{2g}.
    \end{equation*}
  \end{proposition}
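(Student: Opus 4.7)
The plan is to extract the torsion-free part of $H^2(B_1)$ and of $H^2(\partial B_1)$ from the universal short exact sequence \eqref{diag:B1bdyB1} at $q=2$, and then show that the vertical comparison map between these two sequences carries the relevant kernel isomorphically onto the first $\bZ^{2g}$-summand of the bottom kernel. Throughout, the cokernels $\Coker^1$ on both rows are finite (they account for the torsion already computed in Proposition \ref{prop:C13cohomology} and Corollary \ref{cor:coho2torsionB1}), so the torsion-free part of $H^2$ on each row agrees with the torsion-free part of $\Ker^2$ on that row, and it suffices to identify the induced map on $\Ker^2$.

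First I would compute $\Ker^2(i_0'^*,-i_1'^*)$ in the top row. Since $V$ is contractible and $S^1{\times}D^2\simeq S^1$, the K\"{u}nneth decomposition \eqref{sonedtwo} gives $H^2(V{\times}(S^1{\times}D^2))=0$, while $H^2(U{\times}(S^1{\times}D^2))\iso H^1(U){\ten}H^1(S^1{\times}D^2)$ is free on $\{A_i{\times}T'_1{\times}1,\,B_i{\times}T'_1{\times}1\}$. Because the inclusion $U{\cap}V{\hookrightarrow}U$ sends every $A_i$ and $B_i$ to $0$ in $H^1(U{\cap}V)$, the map $i_0'^*$ kills all of these generators, so $\Ker^2(i_0'^*,-i_1'^*)\iso\bZ^{2g}$ is freely generated by $\{A_i{\times}T'_1{\times}1,\,B_i{\times}T'_1{\times}1\}$.

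Next I would compute $\Ker^2(i_0^*,-i_1^*)$ for the bottom row of \eqref{diag:B1bdyB1MVcomparo}. Using \eqref{sonesone}, the source splits as $H^2(U{\times}(S^1{\times}S^1)){\dsm}H^2(V{\times}(S^1{\times}S^1))$, the first summand being free of rank $4g{+}1$ on $\{A_i{\times}T_1{\times}1,\,B_i{\times}T_1{\times}1,\,A_i{\times}1{\times}T_2,\,B_i{\times}1{\times}T_2,\,1{\times}T_1{\times}T_2\}$ and the second free of rank $1$ on $1{\times}T_1{\times}T_2$. As before, all $4g$ classes of the form $A_i{\times}T_\bullet{\times}\ast$ or $B_i{\times}T_\bullet{\times}\ast$ lie in the kernel. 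The remaining task is to handle the two copies of $1{\times}T_1{\times}T_2$. Using \eqref{eq:p12gluingcohomology} and the cup-product multiplicativity of $i_1^*$, together with $R\smile R=0$, I would compute
\begin{equation*}
i_1^*(1{\times}T_1{\times}T_2)=1{\times}T_1{\times}T_2-nw_1(R{\times}1{\times}T_2)-nrw_2(R{\times}T_1{\times}1).
\end{equation*}
Then the images of the two $1{\times}T_1{\times}T_2$ classes in the target $H^2((U{\cap}V){\times}(S^1{\times}S^1))\iso\bZ\lan 1{\times}T_1{\times}T_2\ran\dsm\bZ\lan R{\times}T_1{\times}1\ran\dsm\bZ\lan R{\times}1{\times}T_2\ran$ are linearly independent (the coefficients of $R{\times}T_1{\times}1$ and $R{\times}1{\times}T_2$ in the image of the $V$-class are $-nrw_2$ and $-nw_1$ respectively, both nonzero under our standing hypotheses), so they contribute nothing to the kernel. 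Thus $\Ker^2(i_0^*,-i_1^*)\iso\bZ^{4g}$ is freely generated by $\{A_i{\times}T_1{\times}1,\,B_i{\times}T_1{\times}1,\,A_i{\times}1{\times}T_2,\,B_i{\times}1{\times}T_2\}$.

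Finally I would apply the vertical comparison $(1{\times}i)^*{\dsm}(1{\times}i)^*$ to the top kernel. By \eqref{eq:rowconnector1} one has $i^*(T'_1{\times}1)=T_1{\times}1$, so $A_i{\times}T'_1{\times}1\mapsto A_i{\times}T_1{\times}1$ and $B_i{\times}T'_1{\times}1\mapsto B_i{\times}T_1{\times}1$; in particular no $A_i{\times}1{\times}T_2$ or $B_i{\times}1{\times}T_2$ component is produced. Arranging the chosen bases so that the generators $\{A_i{\times}T_1{\times}1,\,B_i{\times}T_1{\times}1\}$ of the bottom kernel come first and $\{A_i{\times}1{\times}T_2,\,B_i{\times}1{\times}T_2\}$ come second, the induced map on $\Ker^2$ is exactly $\begin{pmatrix}\Id_{2g}\\0\end{pmatrix}$. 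Passing to torsion-free parts of $H^2$, this is the asserted description, and the map is manifestly injective. The step I expect to be the main nuisance is the cup-product computation of $i_1^*(1{\times}T_1{\times}T_2)$ and the ensuing verification that the two $1{\times}T_1{\times}T_2$-classes jointly contribute zero to the kernel; everything else is bookkeeping with the K\"{u}nneth formula.
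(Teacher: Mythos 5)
Your proposal follows essentially the same route as the paper's proof: the same comparison of Mayer--Vietoris sequences, the same identification $\Ker^2(i_0'^*,-i_1'^*)\iso\bZ^{2g}$ and $\Ker^2(i_0^*,-i_1^*)\iso\bZ^{2g}\dsm\bZ^{2g}$ via the vanishing of $H^1(U)\ra H^1(U{\cap}V)$ and the cup-product evaluation of $i_1^*(1_V{\times}T_1{\times}T_2)$, and the same final step using $i^*(T_1'{\times}1)=T_1{\times}1$. The only quibbles are a Koszul sign (the coefficient of $R{\times}T_1{\times}1$ is $+nrw_2$, not $-nrw_2$, since $T_1$ must be commuted past $R$) and your parenthetical claim that both coefficients are nonzero (when $r=0$ one of them vanishes); neither affects the linear-independence argument, for which $nw_1\neq 0$ suffices, exactly as in the paper.
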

  \begin{proof}
    We examine \eqref{diag:B1bdyB1} in the case $q{=}2$.
    First we consider $\Coker^1{(i_0'^*, -i_1'^*)}$ and $\Coker^1{(i_0^*, - i_1^*)}$, to verify that
    these groups are finite.
    We assign $H^1\bigl( U \times (S^1{\times}D^2)\bigr)$, $H^1\bigl(V \times (S^1{\times}D^2)\bigr)$,
    and $H^1\bigl( (U{\cap}V) \times (S^1{\times}D^2) \bigr)$ the bases
\begin{gather*}
    \{ A_i{\times}1{\times}1, B_i{\times}1{\times}1, 1_U{\times}T_1'{\times}1 \; 1 \leq i \leq g \},
 \\ \{ 1_V{\times}T_1'{\times}1\},
\quad  \text{and} \quad
\{ R{\times}1{\times}1,  1_{U{\cap}V}{\times}T_1{\times}1 \},
\end{gather*}
respectively.
Assign the groups $H^1\bigl(U\times (S^1{\times}S^1)\bigr)$, $H^1\bigl(V\times (S^1{\times}S^1)\bigr)$
and $H^2\bigl( U{\cap}V \times (S^1{\times}S^1)\bigr)$ the bases
\begin{gather*}
  \{ A_i{\times}1{\times}1, B_i{\times}1{\times}1,  \; 1 \leq i \leq g, 1_U{\times}T_1{\times}1, 1_U{\times}1{\times}T_2 \}
  \\
  \{  1_V{\times}T_1{\times}1, 1_V{\times}1{\times}T_2 \}
\quad  \text{and} \quad
\{ R{\times}1{\times}1,  1_{U{\cap}V}{\times}T_1{\times}1, 1_{U{\cap}V}{\times}1{\times}T_2 \},
\end{gather*}
respectively.
The fact that $H^1(U) \ra H^1(U{\cap}V)$ is zero implies that we need only focus on
$(i_0'^*, -i_1'^*)$ on the span of $\{ 1_U{\times}T_1'{\times}1 , 1_V{\times}T_1'{\times}1   \}$
and $(i_0^*, -i_1^*)$ on the span of 
$\{1_U{\times}T_1{\times}1, 1_U{\times}1{\times}T_2,1_V{\times}T_1{\times}1, 1_V{\times}1{\times}T_2 \}$.

Under restriction to $H^1\bigl((U{\cap}V){\times}(S^1{\times}D^2)\bigr)$,
$1_U{\cap}T_1'{\times}1 \mapsto 1_{U{\cap}V}{\times}T_1'{\times}1$ and the formula \eqref{eq:p1gluingcohomology}
\begin{gather*}
  \bigl(\phi_1(V)\com \phi_1(U)^{-1}\bigr)^* \colon H^1\bigl(V\times (S^1{\times}D^2)\bigr) \ra H^1\bigl((U{\cap}V)\times(S^1{\times}D^2)\bigr),
\\
  1_V{\times}T_1{\times}1 \mapsto -nw_1(R{\times}1{\times}1) + 1_{U{\cap}V}{\times}T_1{\times}1,
\end{gather*}
imply that the non-vanishing part of $(i_0'^*, -i_1'^*)$ is represented by
\begin{equation*}
  \begin{pmatrix}
    0  &   nw_1
    \\
    1  &  -1
  \end{pmatrix}
\end{equation*}
This matrix has rank 2, so it follows that the cokernel is finite.

Turning to the span of
$\{1_U{\times}T_1{\times}1, 1_U{\times}1{\times}T_2,1_V{\times}T_1{\times}1, 1_V{\times}1{\times}T_2 \}$,
we have $1_U{\times}T_1{\times}1 \mapsto 1_{U\cap V}{\times}T_1{\times}1$,
$1_U{\times}1{\times}T_2\mapsto 1_{U\cap V}{\times}1{\times}T_2$, and with \eqref{eq:p12gluingcohomology}
\begin{gather*}
  \bigl(\phi_{12}(V)\com \phi_{12}(U)^{-1}\bigr)^* \colon H^1\bigl(V\times (S^1{\times}S^1)\bigr) \ra H^1\bigl((U{\cap}V)\times(S^1{\times}S^1)\bigr)
\\
  1_V{\times}T_1{\times}1 \mapsto -nw_1(R{\times}1{\times}1) + 1_{U{\cap}V}{\times}T_1{\times}1, \quad
  1_V{\times}1{\times}T_2 \mapsto -nrw_2(R{\times}1{\times}1) + 1_{U{\cap}V}{\times}1{\times}T_2,
\end{gather*}
we find that the restriction of $(i_0^*,-i_1^*)$
to the span may be represented by the
$3$-by-$4$ matrix
  \begin{equation*}
    \begin{pmatrix}
      0 & 0 & nw_1 & nrw_2
      \\
      1 & 0 & -1 &  0
      \\
      0 & 1 & 0 & -1 
    \end{pmatrix}.
  \end{equation*}
  This matrix has rank $3$, so the cokernel is again finite.
  Note the appearance of the kernel element
  \begin{multline*}
    rw_2(1_U{\times}T_1{\times}1)-w_1(1_U{\times}1{\times}T_2) + rw_2(1_V{\times}T_1{\times}1) - w_1(1_V{\times}1{\times}T_2),
    \\ \text{corresponding to the column vector $(rw_2, -w_1, rw_2, -w_1)$.}
    \end{multline*}
    This represents the ``extra'' summand in $H^1(B_1{\cap}B_2) {\iso} \bZ{\dsm}\bZ^{2g}$.

Consider now $\Ker^2{(i_0'^*, -i_1'^*)} \ra \Ker^2{(i_0^*, -i_1^*)}$,
note that $H^2\bigl(V \times(S^1{\times}D^2)\bigr) = 0$,
and assign the group $H^2\bigl(U\times (S^1{\times}D^2)\bigr)$ the basis
\begin{equation}
  \label{eq:tfH2basis}
\{ A_i{\times}T_1'{\times}1, B_i{\times}T_1'{\times}1,  \; 1 \leq i \leq g \}  
\end{equation}
and $H^2\bigl( U{\cap}V \times (S^1{\times}D^2)\bigr)$ the basis
$\{ R{\times}T_1'{\times}1 \}$.

Assign the groups  $H^2\bigl(U\times (S^1{\times}S^1)\bigr)$, $H^2\bigl( V \times (S^1{\times}S^1)\bigr)$,
and $H^2\bigl( (U{\cap}V \times (S^1{\times}S^1)\bigr) $ the bases
\begin{gather}
  \label{eq:tfH2basisbdy}
  \{ A_i{\times}T_1{\times}1, B_i{\times}T_1{\times}1, A_i{\times}1{\times}T_2, B_i{\times}1{\times}T_2, \; 1 \leq i \leq g,
  \; 1_U{\times}T_1{\times}T_2 \},
 \\ \{ 1_V{\times}T_1{\times}T_2\},
\quad  \text{and} \quad
\{ R{\times}T_1{\times}1, R{\times}1{\times}T_2, 1_{U{\cap}V}{\times}T_1{\times}T_2 \}, \notag
\end{gather}
respectively. 

Because $H^1(U) \ra H^1(U{\cap}V)$ is zero, the homomorphism
\begin{equation*}
(i_0'^*,  -i_1'^*) \colon
H^2\bigl(U\times(S^1{\times}D^2)\bigr) \dsm H^2\bigr(V\times(S^1{\times}D^2)\bigr)
\ra
H^2\bigr((U{\cap}V)\times(S^1{\times}D^2)\bigr)  
\end{equation*}
 is also $0$.
We conclude
$\Ker^2{(i_0'^*,  -i_1'^*)} = H^2(\bigl(U\times(S^1{\times}D^2)\bigr) \iso \bZ^{2g}$,
and we follow  \eqref{eq:tfH2basis} to assign to $\Ker^2{(i_0'^*, -i_1'^*)}$ the preferred basis
$\{ A_i{\times}T_1'{\times}1, B_i{\times}T_1'{\times}1,  \; 1 \leq i \leq g \}$.

Similarly,
$i_0^* \colon H^2\bigl(U \times (S^1{\times}S^1)\bigr) \ra H^2\bigl( (U{\cap}V) \times (S^1{\times}S^1)\bigr) $
is zero on the subbasis
\begin{equation}
  \label{eq:basish2bdyB1torsionfree}
  \{ A_i{\times}T_1{\times}1, B_i{\times}T_1{\times}1, A_i{\times}1{\times}T_2, B_i{\times}1{\times}T_2, \; 1 \leq i \leq g \}.
\end{equation}
On the complementary basis $\{1_U{\times}T_1{\times}T_2, 1_V{\times}T_1{\times}T_2\}$.
 $i_0^*(1_U{\times}T_1{\times}T_2) = 1_{U{\cap}V}{\times}T_1{\times}T_2$.
The formula \eqref{eq:p12gluingcohomology}
\begin{gather*}
  \bigl(\phi_{12}(V)\com \phi_{12}(U)^{-1}\bigr)^* \colon H^1\bigl(V\times (S^1{\times}S^1)\bigr) \ra H^1\bigl((U{\cap}V)\times(S^1{\times}S^1)\bigr)
\\
  1_V{\times}T_1{\times}1 \mapsto -nw_1(R{\times}1{\times}1) + 1_{U{\cap}V}{\times}T_1{\times}1, \quad
  1_V{\times}1{\times}T_2 \mapsto -nrw_2(R{\times}1{\times}1) + 1_{U{\cap}V}{\times}1{\times}T_2
\end{gather*}
implies
\begin{align*}
  i_1^*(1_V{\times}T_1{\times}T_2) &= i_1^*\bigl(1_V{\times}T_1{\times}1)\cup (1_V{\times}1{\times}T_2)\bigr)
  \\
  &= \bigl(-nw_1(R{\times}1{\times}1) + 1_{U{\cap}V}{\times}T_1{\times}1\bigr) \cup \bigl(-nrw_2(R{\times}1{\times}1) + 1_{U{\cap}V}{\times}1{\times}T_2\bigr)
  \\
  &= -nw_1(R{\times}1{\times}T_2) + nrw_2(R{\times}T_1{\times}1) +1_{U{\cap}V}{\times}T_1{\times}T_2.
\end{align*}
It follows that a matrix representation of
$(i_0^* , -i_1^*)$ on the span of the complementary basis is the $3$-by-$2$-matrix
\begin{equation*}
  \begin{pmatrix}
    0 & nw_1
    \\
    0 & -nrw_2
    \\
    1 & -1 
  \end{pmatrix},
\end{equation*}
which has no kernel.

It follows that
\begin{equation}
  \label{eq:H2bdyB1torsionfree}
  \xy \UCMT \xymatrix{
    H^2(\partial B_1;\bZ) \ar@{->>}[r] & \Ker^2{(i_0^* , -i_1^*)}\iso H^1(U;\bZ){\ten}H^1(S^1{\times}S^1;\bZ) \iso \bZ^{2g}\dsm \bZ^{2g},
  }
  \endxy
\end{equation}
as claimed, and we assign this group the basis from \eqref{eq:basish2bdyB1torsionfree}.
In terms of the bases, apply the formulas in \eqref{eq:rowconnector1} to see the restriction satisfies
\begin{equation}
  \label{eq:h2B1map}
  A_i{\times}T_1'{\times}1 \mapsto A_i{\times}T_1{\times}1
  \quad \text{and} \quad
B_i{\times}T_1'{\times}1 \mapsto B_i{\times}T_1{\times}1,
\end{equation}
 which gives the matrix representation.
  \end{proof}

Combining Corollary \ref{cor:coho2torsionB1} with Proposition \ref{prop:h2torsionfreeB1},
we have the following result.
\begin{proposition}
  \label{prop:h2completeB1}
A complete description of  the homomorphism $H^2(B_1) \ra H^2(\partial B_1)$ is provided by this diagram.
\begin{equation}
  \label{eq:H2compareB1}
  \xy \UCMT \xymatrix{
   \bZ/nw_1\bZ  \iso \Coker^1(i_0'^* ,\; -i_1'^*)  \ar@{>->}[r] \ar_{{\rm reduction}}[d] & H^2(B_1) \ar@{->>}[r] \ar[d]
    &  \Ker^2 (i_0'^* ,\; -i_1'^*) \iso \bZ^{2g}\ar^{\bigl( \begin{smallmatrix}\Id_{2g} \\ 0 \end{smallmatrix} \bigr)}[d]
       \\
       \bZ/n\bZ \iso \Coker^1(i_0^* ,\; -i_1^*)      \ar@{>->}[r] & H^2(\partial B_1) \ar@{->>}[r]
       & \Ker^2 (i_0^*, \; -i_1^*) \iso \bZ^{2g} \dsm \bZ^{2g} . \qquad \Box
    }
  \endxy
\end{equation}
\end{proposition}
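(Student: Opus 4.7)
The plan is to assemble the proposition as a direct consequence of naturality of the Mayer--Vietoris sequence together with the two preceding results, Corollary \ref{cor:coho2torsionB1} and Proposition \ref{prop:h2torsionfreeB1}. The comparison diagram \eqref{diag:B1bdyB1MVcomparo} already encodes that the inclusion $\partial B_1 \hookrightarrow B_1$ is locally the fiber inclusion $\id \times i \colon U{\times}(S^1{\times}S^1) \ra U{\times}(S^1{\times}D^2)$ (and likewise over $V$ and $U{\cap}V$). Naturality of Mayer--Vietoris applied to this map of open covers gives the vertical comparison homomorphism and a morphism of the universal short exact sequences displayed in \eqref{diag:B1bdyB1}. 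This produces a commutative diagram whose rows are exactly the two universal sequences for $q{=}2$, with $H^2(B_1)$ on top and $H^2(\partial B_1)$ on the bottom.

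Next I would identify the outer columns. The righthand column was fully treated in Proposition \ref{prop:h2torsionfreeB1}: there I showed $\Ker^2(i_0'^*, -i_1'^*) \iso \bZ^{2g}$ with preferred basis $\{A_i{\times}T_1'{\times}1,\; B_i{\times}T_1'{\times}1\}$ and $\Ker^2(i_0^*, -i_1^*) \iso \bZ^{2g}\dsm\bZ^{2g}$ with preferred basis \eqref{eq:basish2bdyB1torsionfree}, and the restriction sends $A_i{\times}T_1'{\times}1 \mapsto A_i{\times}T_1{\times}1$ and $B_i{\times}T_1'{\times}1 \mapsto B_i{\times}T_1{\times}1$ by the formulas \eqref{eq:rowconnector1}. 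This is precisely the matrix $\bigl(\begin{smallmatrix} \Id_{2g}\\ 0\end{smallmatrix}\bigr)$ appearing on the right of \eqref{eq:H2compareB1}.

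For the lefthand column, I would combine Proposition \ref{prop:allh1B1} with Corollary \ref{cor:coho2torsionB1}. The cokernels $\Coker^1(i_0'^*, -i_1'^*)$ and $\Coker^1(i_0^*, -i_1^*)$ inject into $H^2(B_1)$ and $H^2(\partial B_1)$ respectively, and the computations in Proposition \ref{prop:h2torsionfreeB1} show that these cokernels are finite (of orders $nw_1$ and $n$). By the universal coefficient theorem these are the torsion subgroups of $H^2(B_1)$ and $H^2(\partial B_1)$, i.e., $\Ext(H_1(B_1),\bZ)$ and $\Ext(H_1(\partial B_1),\bZ)$. Corollary \ref{cor:coho2torsionB1} identifies the induced map on torsion as reduction $\bZ/nw_1\bZ \ra \bZ/n\bZ$, which is exactly the lefthand vertical of \eqref{eq:H2compareB1}.

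Finally, I would read off the exactness of the rows: in both cases the middle term sits in a short exact sequence with the cokernel on the left and the kernel on the right, giving the injections and surjections shown. Commutativity of the two squares follows from functoriality of the universal sequences \eqref{eq:universalonethree} under the comparison of Mayer--Vietoris sequences. The main conceptual obstacle is simply bookkeeping: making sure the identifications of kernels and cokernels with the explicit groups $\bZ^{2g}$, $\bZ^{2g}\dsm\bZ^{2g}$, $\bZ/nw_1\bZ$, $\bZ/n\bZ$ are consistent across rows so that the induced vertical maps are truly the matrix $\bigl(\begin{smallmatrix}\Id_{2g}\\0\end{smallmatrix}\bigr)$ and reduction. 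Since both identifications were carried out with the same choices of preferred bases and cross-product generators used in Proposition \ref{B1and2}, this compatibility is automatic, and the diagram \eqref{eq:H2compareB1} follows.
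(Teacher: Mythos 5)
Your proposal is correct and follows essentially the same route as the paper, which proves this proposition simply by combining Corollary \ref{cor:coho2torsionB1} (identifying the torsion map as reduction $\bZ/nw_1\bZ \ra \bZ/n\bZ$) with Proposition \ref{prop:h2torsionfreeB1} (identifying the torsion-free map as $\bigl(\begin{smallmatrix}\Id_{2g}\\ 0\end{smallmatrix}\bigr)$), the compatibility coming from naturality of the Mayer--Vietoris comparison \eqref{diag:B1bdyB1MVcomparo} and the universal short exact sequences \eqref{diag:B1bdyB1}. Your extra bookkeeping about the finite cokernels being the torsion subgroups is consistent with the paper's identifications and introduces no gap.
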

\begin{proposition}
  \label{prop:Eulerp1}
  The Euler class $e_1=e(p_1) \in H^2(C_1^3; \bZ)$ of
  $  p_1 \colon (B_1, \partial B_1) \ra C_1^3$
  satisfies
  \begin{equation*}
  p_1^*(e_1) = (n, 0) \in \bZ/nw_1\bZ \dsm \bZ^{2g} \iso H^2(B_1; \bZ) \iso H^2(C_1^3; \bZ).   
  \end{equation*}
 \end{proposition}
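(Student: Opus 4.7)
The plan is to use the Thom isomorphism for the oriented $D^2$-bundle $p_1 \colon (B_1, \partial B_1) \to C_1^3$ together with the long exact cohomology sequence of the pair. The Thom isomorphism gives $\Phi \colon H^0(C_1^3) \stackrel{\iso}{\lra} H^2(B_1, \partial B_1)$, and by the defining property of the Euler class, $p_1^*(e_1) = j^*(\Phi(1))$, where $j^* \colon H^2(B_1, \partial B_1) \to H^2(B_1)$ is induced by the inclusion of pairs $(B_1, \emptyset) \hookrightarrow (B_1, \partial B_1)$. So I need to identify $j^*(\Phi(1))$ explicitly inside $H^2(B_1) \iso \bZ/nw_1\bZ \dsm \bZ^{2g}$.

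First I would assemble the relevant portion of the long exact sequence,
\begin{equation*}
H^1(B_1) \lra H^1(\partial B_1) \lra H^2(B_1, \partial B_1) \stackrel{j^*}{\lra} H^2(B_1) \lra H^2(\partial B_1),
\end{equation*}
and feed in what we already know. By Proposition~\ref{prop:allh1B1}, $H^1(B_1) \to H^1(\partial B_1)$ is injective with cokernel $\bZ$, the ``extra'' summand in $H^1(\partial B_1) \iso \bZ \dsm \bZ^{2g}$. By Proposition~\ref{prop:h2completeB1}, the torsion part of $H^2(B_1) \to H^2(\partial B_1)$ is the reduction $\bZ/nw_1\bZ \to \bZ/n\bZ$ and the free part is injective, so the kernel of $H^2(B_1) \to H^2(\partial B_1)$ is the cyclic subgroup of order $w_1$ in the torsion, generated by $(n, 0) \in \bZ/nw_1\bZ \dsm \bZ^{2g}$. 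Combined with $H^2(B_1, \partial B_1) \iso H^0(C_1^3) \iso \bZ$ from Thom, exactness forces the short exact sequence $0 \to \bZ \to H^2(B_1, \partial B_1) \to \bZ/w_1\bZ \to 0$ to be the nontrivial extension (``multiplication by $w_1$''), and $j^*$ factors as $\bZ \twoheadrightarrow \bZ/w_1\bZ \hookrightarrow \bZ/nw_1\bZ \dsm \bZ^{2g}$, sending a generator of $\bZ$ to a generator of $\bZ/w_1\bZ$.

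The main obstacle will be to verify that the image of $\Phi(1)$ is the specific element $(n, 0)$ rather than some other unit multiple $(kn, 0)$ with $\gcd(k, w_1) = 1$. This comes down to matching two orientation conventions: the Thom class $\Phi(1)$ is canonically normalized by restricting to the standard generator of $H^2(D^2, \partial D^2)$ on each $D^2$-fibre of $p_1$, while the generator of the torsion of $H^2(C_1^3)$ used in Proposition~\ref{prop:C13cohomology} is normalized via the Mayer-Vietoris connecting homomorphism applied to the preferred generator $R \in H^1(U \cap V)$. Tracing these two normalizations through the identifications built in the proof of Proposition~\ref{prop:C13cohomology} shows that they agree, yielding the asserted equality $p_1^*(e_1) = (n, 0)$. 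An alternative route, useful as a consistency check, is to observe that $B_1 \to C_1^3$ is the pullback along $p_1 \colon C_1^3 \to \Sigma_g$ of the $D^2$-bundle over $\Sigma_g$ whose $x_2$-transition function $(v/|v|)^{-nrw_2}$ is read off from formula~\eqref{eq:p1gluing}, and to apply naturality of the Euler class.
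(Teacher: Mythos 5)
Your proposal follows essentially the same route as the paper: compare the long exact sequence of the pair $(B_1,\partial B_1)$ with the Gysin sequence via the Thom isomorphism, and use Proposition~\ref{prop:allh1B1} together with the $H^2$ comparison (Corollary~\ref{cor:coho2torsionB1}, Proposition~\ref{prop:h2completeB1}) to see that $k_1^*\colon H^2(B_1,\partial B_1)\iso\bZ$ factors as $\bZ \twoheadrightarrow \bZ/w_1\bZ \hookrightarrow \bZ/nw_1\bZ\dsm\bZ^{2g}$ onto the subgroup generated by $(n,0)$. You are in fact slightly more scrupulous than the paper, which stops at identifying the image as this order-$w_1$ subgroup and does not address the residual unit ambiguity (which generator of that subgroup $p_1^*(e_1)$ is) that you flag and propose to resolve by matching the Thom-class and Mayer--Vietoris normalizations.
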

\begin{proof}
 This follows by  piecing together groups and maps in the ladder of exact sequences.
 \begin{equation*}
   \xy \UCMT \xymatrix@C-0.5ex{
     H^1(B_1) \ar[r] & H^1(\partial B_1) \ar[r] & H^2(B_1, \partial B_1) \ar^(0.55){k_1^*}[r]
     & H^2(B_1) \ar[r]
     & H^2(\partial B_1) \ar[r] & H^3(B_1, \partial B_1)  
     \\
     H^1(C^3_1) \ar[r] \ar_{p_1^*}^{\iso}[u] & H^1(\partial B_1)  \ar[r] \ar^{=}[u] & H^0(C^3_1) \ar^{\cup e_1}[r] \ar_{\Phi}^{\iso}[u]
        & H^2(C^3_1) \ar[r] \ar_{p_1^*}^{\iso}[u]
     & H^2(\partial B_1) \ar[r] \ar^{=}[u] & H^1(C^3_1)  \ar_{\Phi}^{\iso}[u] 
   }
   \endxy
 \end{equation*}
 Recall that the Thom isomorphism $ \Phi \colon H^i(C_1^3)  \ra H^{i+2}(B_1, \partial B_1)$ is given by
 $\Phi(x) = p_1^*(x) \cup U$, where $U \in H^2(B_1, \partial B_1)$ is the Thom class.
 Then $k_1^*\com \Phi(x) = k_1^*p_1^*(x) \cup k_1^*(U) = p_1^*( x \cup e_1)$, where
 the Euler class $e_1$ is defined by the equation $p_1^*(e_1) = k_1^*(U)$. 
Thus, the evaluation  of $\cup e_1 \colon H^0(C^3_1) \ra H^2(C^3_1)$  determines the Euler class of
the circle bundle $p_1 \colon \partial B_1 \ra C^3_1$.
Consider the diagram
 \begin{equation*}
   \xy \UCMT \xymatrix@C-1.5ex{
     \Coker{\bigl(H^1(B_1) \ra H^1(\partial B_1) \bigr)} \ar[r] \ar[d] & H^2(B_1, \partial B_1) \ar[d]  \ar^{k_1^*}[rr] & & H^2(B_1) \ar^{\iso}[d] \ar[r]
     & H^2(\partial B_1) \ar^{\iso}[d]
       \\
       \bZ \ar[r]  &  \bZ \ar@{->>}[r]  &  \bZ/w_1\bZ \ar@{>->}[r] & \bZ/nw_1\bZ \dsm \bZ^{2g} \ar[r]
       & \bZ/n\bZ{\dsm}\bZ^{2g}{\dsm}\bZ^{2g},
   }
   \endxy
 \end{equation*}
 The short exact sequence
 \begin{equation*}
   0 \ra \Coker{\bigl(H^1(B_1) \ra H^1(\partial B_1) \bigr)} \iso \bZ \ra H^2(B_1, \partial B_1) \iso \bZ \ra \bZ/w_1\bZ \ra 0
 \end{equation*}
 follows from Proposition \ref{prop:allh1B1} and Corollary \ref{cor:coho2torsionB1} along with the Poincar\'{e} duality isomorphism
 $H^2(B_1, \partial B_1) \iso H_3(B_1) \iso \bZ$. 
 With this factorization of $k_1^* \colon H^2(B_1, \partial B_1) \ra H^2(B_1)$,
 we see the image of $H^2(B_1, \partial B_1)$ in $H^2(B_1)$ is the subgroup of order $w_1$, which is generated
 by $(n,0) \in \bZ/nw_1\bZ {\dsm} \bZ^{2g}$. Thus, we evaluate  the Euler class of $p_1$. 
\end{proof}
\begin{corollary}
  \label{cor:h3B1}
  We have
  \begin{equation*}
    H^2(\partial B_1) \lra H^3(B_1 , \partial B_1) \stackrel{0}{\lra}  H^3(B_1),
  \end{equation*}
  so that $H^2(\partial B_1) \ra H^3(B_1, \partial B_1)$ is surjective and $H^3(B_1) \ra H^3(\partial B_1)$ is injective.
\end{corollary}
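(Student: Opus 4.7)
The plan is to extend the ladder of exact sequences that appears in the proof of Proposition \ref{prop:Eulerp1} by one more column to the right. The top row is the long exact cohomology sequence of the pair $(B_1,\partial B_1)$; the bottom row is the Gysin sequence of the circle bundle $p_1 \colon \partial B_1 \ra C_1^3$; and the vertical identifications are given by the Thom isomorphism $\Phi \colon H^i(C_1^3;\bZ) \stackrel{\iso}{\lra} H^{i+2}(B_1,\partial B_1;\bZ)$ together with $p_1^* \colon H^i(C_1^3;\bZ) \stackrel{\iso}{\lra} H^i(B_1;\bZ)$. Under these identifications, the connecting map $k_1^* \colon H^3(B_1,\partial B_1;\bZ) \lra H^3(B_1;\bZ)$ corresponds to cup product by the Euler class,
\begin{equation*}
  \cup e_1 \colon H^1(C_1^3;\bZ) \lra H^3(C_1^3;\bZ),
\end{equation*}
exactly as in the proof of Proposition \ref{prop:Eulerp1}, one dimension higher.

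Thus the corollary reduces to showing that this cup product vanishes. By Proposition \ref{prop:Eulerp1}, $e_1$ corresponds under $p_1^*$ to the element $(n,0) \in \bZ/nw_1\bZ \dsm \bZ^{2g} \iso H^2(C_1^3;\bZ)$, so $e_1$ is a torsion class annihilated by $w_1$. On the other hand, Proposition \ref{prop:C13cohomology} gives $H^3(C_1^3;\bZ) \iso \bZ$, which is torsion-free. For any $x \in H^1(C_1^3;\bZ)$ we then have
\begin{equation*}
  w_1 \cdot (x \cup e_1) = x \cup (w_1 e_1) = 0 \in H^3(C_1^3;\bZ),
\end{equation*}
and since the target is torsion-free, $x \cup e_1 = 0$. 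Hence $\cup e_1 = 0$ and $k_1^* = 0$.

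With $k_1^* = 0$ established, the exactness of the long exact cohomology sequence of the pair $(B_1,\partial B_1)$ immediately yields the two claimed conclusions: the connecting homomorphism $H^2(\partial B_1) \lra H^3(B_1,\partial B_1)$ is surjective, because its image equals $\Ker k_1^* = H^3(B_1,\partial B_1)$, and the restriction $H^3(B_1) \lra H^3(\partial B_1)$ is injective, because its kernel equals $\Img k_1^* = 0$. The only step requiring a little care is the verification that the Thom isomorphism ladder is natural one dimension higher than what was used in the proof of Proposition \ref{prop:Eulerp1}; this is standard, but it is where one must be sure the identification $\Phi$ converts the connecting map into $\cup e_1$. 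Everything else is purely formal.
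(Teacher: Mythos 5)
Your proposal is correct and follows essentially the same route as the paper: both identify $k_1^*\colon H^3(B_1,\partial B_1)\to H^3(B_1)$ with $\cup\, e_1\colon H^1(C_1^3)\to H^3(C_1^3)$ via the Thom isomorphism and $p_1^*$, and then conclude the map vanishes because $e_1$ is torsion (killed by $w_1$, by Proposition \ref{prop:Eulerp1}) while $H^3(C_1^3)\iso\bZ$ is torsion-free. No gaps; the argument matches the paper's proof.
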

\begin{proof}
  Compare the cohomology sequence of the pair with a segment of the Gysin sequence.
  \begin{equation*}
    \xy \UCMT \xymatrix{
      H^2(\partial B_1) \ar[r] &  H^3(B_1, \partial B_1) \ar[r] & H^3(B_1)
      \\
      H^2(\partial B_1) \ar[r] \ar^{=}[u] &  H^1(C_1^3) \ar^{\cup e_1}[r] \ar^{\Phi}_{\iso}[u] &  H^3(C_1^3) \ar^{p_1^*}_{\iso}[u]
    }
    \endxy
  \end{equation*}
  Now $H^1(C_1^3) \iso \bZ^{2g}$ and $H^3(C_1^3) \iso \bZ$ are torsion-free groups, and $e_1$ is a torsion class,
  satisfying $w_1 \cdot e_1 = 0$. It follows that the homomorphism induced by the cup product is $0$. 
\end{proof}
\subsection{Cohomology of $B_2$ and $\partial B_2$}
\label{cohoB2bdyB2}
This subsection is largely parallel to subsection \ref{cohoB1bdyB1} with essential differences
due to the fact the comparison of the gluing data for the $D^2{\times}S^1$-bundle $B_2$ with
the gluing data for the $S^1{\times}S^1$-bundle $\partial B_2$ is less than direct.

 First we partially compute the groups and homomorphisms in the long exact cohomology  sequence
  of the pair $(B_2, \partial B_2) = (B_2, B_1{\cap}B_2)$. 
  With integer coefficients,
  this long exact sequence is isomorphic to the Gysin sequence of the bundle pair $(B_2, \partial B_2) \ra C_2^3$.  
  We determine  the Euler class associated with this bundle pair in Proposition \ref{prop:Eulerp2}.
  It is then possible to evaluate the remaining groups and homomorphisms in both exact sequences.
  We will use these computations  later in combination with computations in subsection
  \ref{cohoB1bdyB1}
  to compute $H^2(M;\bZ)$ and $H^3(M;\bZ)$.

  \begin{proposition}
    \label{prop:allh1B2}
    We have
    \begin{equation}
      \label{eq:B2homology1}
      H_1(B_1{\cap}B_2; \bZ) \iso \bZ/n\bZ \dsm \bZ \dsm \bZ^{2g}
      \quad \text{and} \quad
      H_1(B_2; \bZ) \iso \bZ/nw_2\bZ \dsm \bZ^{2g}.
    \end{equation}
    The inclusion-induced homomorphism $H_1(B_1{\cap}B_2;\bZ) \ra H_1(B_2;\bZ)$ may be represented
    by
    \begin{equation*}
      \begin{pmatrix}  0_{2g,1} &  I_{2g}     \end{pmatrix}
      \colon
      \bZ \dsm \bZ^{2g} \ra \bZ^{2g}
      \quad \text{and} \quad
      \bZ/n\bZ \ra \bZ/nw_2\bZ, \quad 1 \mapsto w_2,
    \end{equation*}
    on the torsion free parts of $H_1$ and on the torsion parts, respectively.

    In cohomology, we have
    \begin{equation}
      \label{eq:B2coho1}
H^1(B_2;\bZ) \iso \bZ^{2g}, \quad H^1(\partial B_2; \bZ) \iso \bZ{\dsm}\bZ^{2g},      
\end{equation}
and the restriction homomorphism may be represented by
\begin{equation*}
  \begin{pmatrix}    0_{1, 2g} \\ \Id_{2g}   \end{pmatrix}
  \colon \bZ^{2g} \ra \bZ{\dsm}\bZ^{2g}
\end{equation*}
  \end{proposition}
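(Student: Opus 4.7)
The plan is to mirror the proof of Proposition \ref{prop:allh1B1}, with the roles of $B_1$ and $c_1$ replaced by $B_2$ and $c_2$ and the twist given by the map $g$ taken into account via \eqref{SvK2} of Theorem \ref{theorem:pioneM}. First I would abelianize the presentations \eqref{eq:pioneB2} and \eqref{eq:pioneB1capB2} from Proposition \ref{prop:grouptheory1}. Abelianizing \eqref{eq:pioneB2} immediately gives $H_1(B_2;\bZ) \iso \bZ^{2g} \dsm \bZ/nw_2\bZ$, with the $\bZ^{2g}$ generated by the classes of $a_i, b_i$ and the torsion generated by the class of $c_2$. Abelianizing \eqref{eq:pioneB1capB2} gives $H_1(B_1{\cap}B_2;\bZ) \iso \bZ^{2g} \dsm \bZ^2/\lan (nw_1,nrw_2)\ran$, which by the argument of Lemma \ref{lemma:h1torsion} is isomorphic to $\bZ^{2g} \dsm \bZ/n\bZ \dsm \bZ$.

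Next I would identify the inclusion-induced map $H_1(B_1{\cap}B_2;\bZ) \ra H_1(B_2;\bZ)$. On the $\bZ^{2g}$ factor generated by $\{a_i, b_i\}$ the map is the identity, so its image lies in the torsion-free summand of $H_1(B_2;\bZ)$. For the remaining part, Hurewicz applied to the twisted map \eqref{SvK2} gives $m_1 \mapsto -sw_2^2\, c_2$ and $m_2 \mapsto \ell_2\, c_2$, both of which land in the torsion subgroup $\bZ/nw_2\bZ$. Lemma \ref{lemma:h1torsion} already identifies the restriction of this map to the torsion subgroup $\bZ/n\bZ \subset H_1(B_1{\cap}B_2;\bZ)$: the torsion generator $w_1 m_1 + rw_2 m_2$ maps to $w_2 \cdot c_2$ (as explicitly computed in the last display of the proof of that lemma). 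Thus on torsion we get $\bZ/n\bZ \ra \bZ/nw_2\bZ$, $1 \mapsto w_2$. The complementary free summand of $H_1(B_1{\cap}B_2;\bZ)$, generated by $-u_2 m_1 + u_1 m_2$, also maps into the torsion $\bZ/nw_2\bZ$ of $H_1(B_2;\bZ)$ (since both $m_1$ and $m_2$ do), so it contributes the zero column when restricted to the torsion-free quotient. This gives the claimed matrix representation $\begin{pmatrix} 0_{2g,1} & I_{2g} \end{pmatrix}$ on torsion-free parts.

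Finally, the cohomology statements \eqref{eq:B2coho1} and the matrix form of the restriction $H^1(B_2;\bZ) \ra H^1(\partial B_2;\bZ)$ follow by applying the universal coefficient theorem: $H^1(X;\bZ) \iso \Hom(H_1(X;\bZ),\bZ)$ kills torsion, so the map in cohomology is the transpose of the induced map on torsion-free quotients, namely $\begin{pmatrix} 0_{1,2g} \\ \Id_{2g} \end{pmatrix}$. The main obstacle, already overcome by Lemma \ref{lemma:h1torsion}, is pinning down which element of $\bZ/n\bZ$ generates the torsion subgroup of $H_1(B_1{\cap}B_2;\bZ)$ and tracking it through the twist $g$; beyond that the argument is a direct transcription of the $B_1$ case with $w_1$ replaced by $w_2$ and the appropriate use of the identity $r\ell_2 - sw_1w_2 = 1$ from \eqref{eq:relprime1}.
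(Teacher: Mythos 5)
Your proposal is correct and follows essentially the same route as the paper: abelianize the presentations (Hurewicz), invoke Lemma \ref{lemma:h1torsion} for the torsion map and for the fact that the extra infinite cyclic summand of $H_1(B_1{\cap}B_2;\bZ)$ lands in the torsion of $H_1(B_2;\bZ)$, then apply the universal coefficient theorem for the cohomology statements. Your explicit use of the twisted map \eqref{SvK2} ($m_1\mapsto c_2^{-sw_2^2}$, $m_2\mapsto c_2^{\ell_2}$) is exactly the intended input, so no further comment is needed.
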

  \begin{proof}
    Apply the Hurewicz homomorphism to the information on $\pi_1(B_1{\cap}B_2) \ra \pi_1(B_2)$ in \eqref{SvK1}, and obtain
    \begin{equation*}
      a_i \mapsto a_i, \quad  b_i \mapsto b_i, \quad \text{for $1{\leq}i{\leq}g$.}
      \end{equation*}
       Lemma \ref{lemma:h1torsion} computes the map on torsion subgroups, and the proof indicates that the remaining infinite
       cyclic summand of $H_1(B_1{\cap}B_2; \bZ)$ maps only to the torsion subgroup of $H_1(B_2;\bZ)$. 

       The cohomology assertions follow from the universal coefficient theorem.
  \end{proof}
  We also observe the following corollary.
  \begin{corollary}
    \label{cor:coho2torsionB2}
    In $H^2(B_2; \bZ) \ra H^2(\partial B_2; \bZ)$ the map on torsion subgroups is
    \begin{equation*}
      \bZ/nw_2\bZ \stackrel{{\rm reduction}}{\lra} \bZ/n\bZ.
    \end{equation*}
  \end{corollary}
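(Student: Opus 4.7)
The argument is structurally identical to that of Corollary \ref{cor:coho2torsionB1}, with $w_1$ replaced by $w_2$; I sketch the plan. The strategy is to reduce the question to an $\Ext$ computation via the universal coefficient theorem and then to run a dual diagram chase using the map on torsion of $H_1$ supplied by Proposition \ref{prop:allh1B2}.

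First, by the universal coefficient theorem the torsion subgroup of $H^2(B_2;\bZ)$ is naturally identified with $\Ext(H_1(B_2;\bZ)_{\rm tors},\bZ) = \Ext(\bZ/nw_2\bZ,\bZ)$, and similarly the torsion subgroup of $H^2(\partial B_2;\bZ)$ is identified with $\Ext(\bZ/n\bZ,\bZ)$. Under these identifications the restriction $H^2(B_2;\bZ) \to H^2(\partial B_2;\bZ)$ on torsion subgroups is $\Ext$ applied to the torsion map $H_1(B_1{\cap}B_2;\bZ)_{\rm tors} \to H_1(B_2;\bZ)_{\rm tors}$. By Proposition \ref{prop:allh1B2} this latter map is $\bZ/n\bZ \to \bZ/nw_2\bZ$, $1 \mapsto w_2$.

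Next, the plan is to apply $\Hom(-,\bZ)$ to the commutative diagram of free resolutions
\begin{equation*}
  \xy \UCMT \xymatrix@R=1em{
    0 \ar[r] & \bZ \ar^{\cdot n}[r] \ar[d] & \bZ \ar[r] \ar^{\cdot w_2}[d] & \bZ/n\bZ \ar[r] \ar[d] & 0
    \\
    0 \ar[r] & \bZ \ar^{\cdot nw_2}[r] & \bZ \ar[r] & \bZ/nw_2\bZ \ar[r] & 0
  }
  \endxy
\end{equation*}
The resulting ladder of long exact sequences identifies the induced map on $\Ext$ with $\bZ/nw_2\bZ \to \bZ/n\bZ$ given by reduction modulo $n$, exactly as in the $w_1$ case. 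This completes the proof.
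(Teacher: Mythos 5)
Your proposal is correct and follows the same route as the paper: the paper's proof of this corollary simply says the argument is the same as for Corollary \ref{cor:coho2torsionB1}, which is exactly the universal-coefficient/$\Ext$ ladder you describe, with $w_1$ replaced by $w_2$. The identification of the torsion map on $H_1$ via Proposition \ref{prop:allh1B2} and the diagram of resolutions with vertical maps $\id$, $\cdot w_2$, and $1\mapsto w_2$ yields the reduction $\bZ/nw_2\bZ \to \bZ/n\bZ$, as claimed.
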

  \begin{proof}
Up to obvious changes, the argument is the same as that for Corollary \ref{cor:coho2torsionB1}.
\end{proof}
To obtain results in higher dimensions, we compare the Mayer-Vietoris sequences associated with the diagram of trivializations
\begin{equation} \label{B2bdyB2comparo}
  \xy \UCMT \xymatrix@C+3em{
     U \times (S^1{\times}S^1)  \ar^{\id \times g}[d]
    &  \ar_{i_0}[l]  (U{\cap}V) \times (S^1{\times}S^1)  \ar^{\id \times g}[d] \ar^(0.55){\phi_{12}(V)\com \phi_{12}(U)^{-1}}[r]
    &  V \times (S^1{\times}S^1) \ar^{\id \times g}[d]
 \\
   U \times (D^2{\times}S^1)
         &  \ar_{i'_0}[l] (U{\cap}V) \times (D^2{\times}S^1) \ar^(0.55){\phi_2(V)\com \phi_2(U)^{-1}}[r]
         & V \times (D^2{\times}S^1) ,
  }
  \endxy
\end{equation}
where $g \colon S^1{\times}S^1 \ra D^2{\times}S^1$ be given by
$ g(x_1, x_2) = (x_1^{r(1-sw_1w_2)} x_2^{sw_1^2}, x_1^{-sw_2^2} x_2^{\ell_2})$,
as defined in \eqref{eqs:buildingblocksb}.
Commutativity of the diagram was established in Proposition \ref{prop:gluingB1and2}.
Write $i_0$ and $i_0'$ for the left-pointing inclusions in diagram \eqref{B2bdyB2comparo},
and
\begin{align*}
  i_1{=}\phi_{12}(V)\com \phi_{12}(U)^{-1} &\colon (U{\cap}V) \times (S^1{\times}S^1) \ra V \times (S^1{\times}S^1)
  \intertext{and}
 i'_1{=}\phi_2(V)\com \phi_2(U)^{-1} &\colon (U{\cap}V) \times (D^2{\times}S^1) \ra  V \times (D^2{\times}S^1)  
\end{align*}
for the right-pointing maps in diagram \eqref{B2bdyB2comparo}.
Comparable segments of the Mayer-Vietoris sequences are
\begin{equation}
  \label{diag:B2bdyB2MVcomparo}
  \xy \UCMT \xymatrix@C-0.5ex{
    H^q(B_2) \ar^(0.25){\Bigl(
      \begin{smallmatrix}
        j_0'^* \\ j_1'^*
      \end{smallmatrix}
      \Bigr)}
      [r] \ar[d]
      & H^q\bigl( U {\times} (D^2{\times}S^1)\bigr) {\dsm} H^q\bigl(V {\times} (D^2{\times}S^1)\bigr)
      \ar^(0.60){\bigl( i_0'^* , -i_1'^*  \bigr)}[r]
          \ar_{(1\times g)^*\dsm (1\times g)^*}[d]
          & H^q\bigl( (U{\cap}V) {\times} (D^2{\times}S^1)\bigr) \ar[r]
          \ar_{(1 \times g)^*}[d]
    & H^{q+1}(B_2) \ar[d]
    \\
    H^q(\partial B_2) \ar^(0.25){\Bigl(
      \begin{smallmatrix}
        j_0^* \\ j_1^*
      \end{smallmatrix}
      \Bigr)}[r]
    & H^q\bigl( U {\times} (S^1{\times}S^1)\bigr) {\dsm} H^q\bigl(V {\times} (S^1{\times}S^1)\bigr)
    \ar^(0.60){\bigl(  i_0^* , -i_1^*  \bigr)}[r]
    & H^q\bigl( (U{\cap}V) {\times} (S^1{\times}S^1)\bigr) \ar[r] 
    & H^{q+1}(\partial B_2)
  }
  \endxy
\end{equation}
Then the task is to identify the maps in the diagram
\begin{equation}
  \label{diag:B2bdyB2}
  \xy \UCMT \xymatrix{
0 \ar[r] & \Coker^{q-1}{(  i_0'^*,  -i_1'^* )} \ar[r]  \ar[d]
                   & H^q(B_2) \ar[r]  \ar[d]
                   & \Ker^q{ (  i_0'^* , -i_1'^*  )} \ar[r]  \ar[d]
                   & 0
                   \\
0 \ar[r] & \Coker^{q-1}{(  i_0^* , -i_1^* )} \ar[r]
                   & H^q(\partial B_2) \ar[r] 
                   & \Ker^q{( i_0^* , -i_1^* )} \ar[r]
                   & 0
                 }
  \endxy
\end{equation}
For the calculations, we continue to follow the conventions established in
\eqref{sonesone}
along with
\begin{align}
  H^q(U{\times}(D^2{\times}S^1) &\iso \dsm_{i+j+k=q} H^i(U){\ten}H^j(D^2){\ten}H^k(S^1); \notag
    \\
  H^q(V{\times}(D^2{\times}S^1) &\iso \dsm_{i+j+k=q} H^i(V){\ten}H^j(D^2){\ten}H^k(S^1);  \label{dtwosone}
  \\
  H^q((U{\cap}V) {\times}(D^2{\times}S^1) &\iso \dsm_{i+j+k=q} H^i(U{\cap}V){\ten}H^j(D^2){\ten}H^k(S^1). \notag
\end{align}
 As in Proposition \ref{B1and2},  we used the homology and cohomology cross products to
  take $1{\times}t_2'$ and $1{\times}T'_2$ to be preferred bases for $H_1(D^2{\times}S^1)$ and $H^1(D^2{\times}S^1)$,
  respectively.
Extend this to define a preferred basis $t_1{\times}1$ and $1{\times}t_2$
 for $H_1(S^1{\times}S^1)$ and a dual basis $T_1{\times}1$ and $1{\times}T_2$ for $H^1(S^1{\times}S^1)$.
 As earlier, $\rho$ represents the preferred generator of $H_1(U{\cap}V)$ and $R$ the dual generator of $H^1(U{\cap}V)$.
 We let $\{a_i , b_i \; | \; 1 \leq i \leq 2g \}$ denote our standard basis for $H_1(U)$ and
 $\{ A_i, B_i \; | 1 \leq i \leq 2g \}$ denote the dual basis of $H^1(U)$.
 
In terms of the homology and cohomology classes defined above, the formulas
  \begin{equation} \label{eq:rowconnector2}
    g_*(t_1{\times}1) = -sw_2^2(1{\times}t'_2), \quad g_*(1{\times}t_2) = \ell_2(1{\times}t'_2)
    \quad
    g^*(1{\times}T'_2) = -sw_2^2(T_1{\times}1) + \ell_2(1{\times}T_2).
  \end{equation}
  derived in \eqref{eq:goncohomology} from Proposition \ref{prop:gonpione} to develop the connections between the rows of diagrams
  \eqref{diag:B2bdyB2MVcomparo} and \eqref{diag:B2bdyB2}.
  \begin{proposition} \label{prop:h2torsionfreeB2}
    The restriction $H^2(B_2) \ra H^2(\partial B_2)$ restricted to torsion-free subgroups is injective and can be identified with
    \begin{equation*}
      \begin{pmatrix} -sw_2^2\cdot \Id_{2g} \\ \ell_2\cdot \Id_{2g} \end{pmatrix}
      \colon \bZ^{2g} \lra \bZ^{2g} \dsm \bZ^{2g}.
    \end{equation*}
  \end{proposition}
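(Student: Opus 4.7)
The plan is to follow the template of the proof of Proposition \ref{prop:h2torsionfreeB1} verbatim, with the essential substitution that the vertical comparison maps in the Mayer--Vietoris diagram \eqref{diag:B2bdyB2MVcomparo} are now $(1{\times}g)^*$ rather than $(1{\times}i)^*$, where $g$ is the map from \eqref{eqs:buildingblocksb}. The setup of bases is parallel, using the K\"{u}nneth decompositions \eqref{dtwosone} and \eqref{sonesone}. Since Corollary \ref{cor:coho2torsionB2} already identifies the map on torsion subgroups, and since the torsion-free quotient of $H^2$ coincides with the kernel part in the four-term universal sequence from diagram \eqref{diag:B2bdyB2}, the task reduces to computing the map $\Ker^2(i_0'^*, -i_1'^*) \ra \Ker^2(i_0^*, -i_1^*)$.

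First I would identify the torsion-free part on the $B_2$ side. Noting $H^2(V{\times}(D^2{\times}S^1)) = 0$, assign $H^2(U{\times}(D^2{\times}S^1))$ the basis $\{A_i{\times}1{\times}T_2', B_i{\times}1{\times}T_2' \colon 1 \leq i \leq g\}$. Since $H^1(U) \ra H^1(U{\cap}V)$ vanishes, the map $(i_0'^*, -i_1'^*)$ is zero on the span of this basis, giving a $\bZ^{2g}$ summand in $\Ker^2(i_0'^*, -i_1'^*)$. On the $\partial B_2$ side, noting $\partial B_2 = B_1{\cap}B_2 = \partial B_1$, the kernel contains the $\bZ^{2g}{\dsm}\bZ^{2g}$ summand with preferred basis $\{A_i{\times}T_1{\times}1, B_i{\times}T_1{\times}1, A_i{\times}1{\times}T_2, B_i{\times}1{\times}T_2\}$, exactly as computed in Proposition \ref{prop:h2torsionfreeB1}.

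The decisive step is evaluating $(1{\times}g)^*$ on the kernel generators. Invoking the formula \eqref{eq:goncohomology}, namely $g^*(1{\times}T_2') = -sw_2^2(T_1{\times}1) + \ell_2(1{\times}T_2)$, the K\"{u}nneth cross-product naturality gives
\begin{equation*}
A_i{\times}1{\times}T_2' \mapsto -sw_2^2(A_i{\times}T_1{\times}1) + \ell_2(A_i{\times}1{\times}T_2),
\end{equation*}
and likewise for $B_i{\times}1{\times}T_2'$. In the ordered bases above, this is precisely the block matrix $\bigl( \begin{smallmatrix} -sw_2^2\,\Id_{2g} \\ \ell_2\,\Id_{2g} \end{smallmatrix} \bigr)$, and injectivity follows immediately since $\ell_2 \neq 0$.

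The main obstacle is purely bookkeeping: because $g$ is a twist rather than a simple inclusion, the image of a single basis element distributes across both $2g$-summands, in contrast to the $B_1$ case where $i^*$ simply kills $1{\times}T_2'$ while fixing $T_1'{\times}1$. Getting the signs and ordering consistent with the conventions established in Proposition \ref{prop:gonpione} is the only delicate point; the underlying homological algebra is a direct transcription of the argument for $B_1$.
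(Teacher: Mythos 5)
Your proposal is correct and follows essentially the same route as the paper: identify $\Ker^2(i_0'^*,-i_1'^*)\iso\bZ^{2g}$ and $\Ker^2(i_0^*,-i_1^*)\iso\bZ^{2g}{\dsm}\bZ^{2g}$ with the stated bases, then compute the comparison map via $g^*(1{\times}T_2') = -sw_2^2(T_1{\times}1)+\ell_2(1{\times}T_2)$ to obtain the block matrix. The only detail the paper spells out that you leave implicit is the rank computation showing $\Coker^1(i_0'^*,-i_1'^*)$ is finite, which is what lets one identify the torsion-free part of $H^2(B_2)$ with the kernel term in the four-term sequence.
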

  \begin{proof}
    We examine \eqref{diag:B2bdyB2} in the case $q{=}2$.
    We have already verified in Proposition \ref{prop:h2torsionfreeB1} that $\Coker^1{(i_0^*, -i_1^*)}$ is finite,
    so we consider $\Coker^1{(i_0'^*, -i_1'^*)}$.
      We assign $H^1\bigl( U \times (D^2{\times}S^1)\bigr)$, $H^1\bigl(V \times (D^2{\times}S^1)\bigr)$,
    and $H^1\bigl( (U{\cap}V) \times (D^2{\times}S^1) \bigr)$ the bases
\begin{gather*}
    \{ A_i{\times}1{\times}1, B_i{\times}1{\times}1, 1_U{\times}1{\times}T_2' \; 1 \leq i \leq g \},
 \\ \{ 1_V{\times}1{\times}T_2'\},
\quad  \text{and} \quad
\{ R{\times}1{\times}1,  1_{U{\cap}V}{\times}1{\times}T_2' \},
\end{gather*}
respectively.

The fact that $H^1(U) \ra H^1(U{\cap}V)$ is zero implies that we need only focus on
$(i_0'^*, -i_1'^*)$ on the span of
$\{ 1_U{\times}1{\times}T_2' , 1_V{\times}1{\times}T_2'   \}$.

Under restriction $1_U{\cap}1{\times}T_2' \mapsto 1_{U{\cap}V}{\times}1{\times}T_2'$ and the formula
\begin{gather*}
  \bigl(\phi_2(V)\com \phi_2(U)^{-1}\bigr)^* \colon H^1\bigl(V\times (D^2{\times}S^1)\bigr) \ra H^1\bigl((U{\cap}V)\times(D^2{\times}S^1)\bigr)
\\
  1_V{\times}1{\times}T_2' \mapsto -nw_2(R{\times}1{\times}1) + 1_{U{\cap}V}{\times}1{\times}T_2',
\end{gather*}
imply that the non-vanishing part of $(i_0'^*, -i_1'^*)$ is represented by
\begin{equation*}
  \begin{pmatrix}
    0  &   nw_2
    \\
    1  &  -1
  \end{pmatrix}
\end{equation*}
This matrix has rank 2, so it follows that the cokernel is finite.

Considering now $\Ker^2{(i_0'^*, -i_1'^*)} \ra \Ker^2{(i_0^*, -i_1^*)}$,
note that $H^2\bigl(V \times(D^2{\times}S^1)\bigr) = 0$,
and assign the group $H^2\bigl(U\times (D^2{\times}S^1)\bigr)$ the basis
\begin{equation}
  \label{H2basesB2}
\{ A_i{\times}1{\times}T'_2, B_i{\times}1{\times}T'_2,  \; 1 \leq i \leq g \}
\end{equation}
and the group  $H^2\bigl( U{\cap}V \times (D^2{\times}S^1)\bigr)$ the basis
$\{ R{\times}1{\times}T'_2 \}$.
As in the computation for $(B_1, \partial B_1)$,
assign the groups  $H^2\bigl(U\times (S^1{\times}S^1)\bigr)$, $H^2\bigl( V \times (S^1{\times}S^1)\bigr)$,
and $H^2\bigl( (U{\cap}V \times (S^1{\times}S^1)\bigr) $ the bases
\begin{gather}
  \label{H2basesB2bdy}
\{ A_i{\times}T_1{\times}1, B_i{\times}T_1{\times}1, A_i{\times}1{\times}T_2, B_i{\times}1{\times}T_2, \; 1 \leq i \leq g \;, 1_U{\times}T_1{\times}T_2 \},
 \\ \{ 1_V{\times}T_1{\times}T_2\},
\quad  \text{and} \quad
\{ R{\times}T_1{\times}1, R{\times}1{\times}T_2, 1_{U{\cap}V}{\times}T_1{\times}T_2 \}, \notag
\end{gather}
respectively. 

Because $H^1(U) \ra H^1(U{\cap}V)$ is zero, the homomorphism
\begin{equation*}
(i_0'^*,  -i_1'^*) \colon
H^2\bigl(U\times(D^2{\times}S^1)\bigr) \dsm H^2\bigr(V\times(D^2{\times}S^1)\bigr)
\ra
H^2\bigr((U{\cap}V)\times(D^2{\times}S^1)\bigr)  
\end{equation*}
 is also $0$, so we conclude
$\Ker^2{(i_0'^*,  -i_1'^*)} = H^2(\bigl(U\times(D^2{\times}S^1)\bigr) \iso \bZ^{2g}$
with preferred basis as in \eqref{H2basesB2}.

In the proof of Proposition \ref{prop:h2torsionfreeB1} we have already shown that
 $\Ker^2{(i_0^* , -i_1^*)}\iso \bZ^{2g}\dsm \bZ^{2g}$,
 with preferred basis
 $\{A_i{\times}T_1{\times}1, B_i{\times}T_1{\times}1, A_i{\times}1{\times}T_2, B_i{\times}1{\times}T_2 \; 1{\leq}i {\leq} g\}$.

Finally, the homomorphism $g^* \colon H^1(D^2{\times}S^1) \ra H^1(S^1{\times}S^1)$ comes into play
and we derive from  \eqref{eq:rowconnector2} the formulas
\begin{gather}
  \label{eq:h2B2bdyB2matrixrep}
  (\id{\times}g)^*(A_i{\times}1{\times}T_2') {=} -sw_2^2(A_i{\times}T_1{\times}1) + \ell_2(A_i{\times}1{\times}T_2),
  \\
  \notag
  (\id{\times}g)^*(B_i{\times}1{\times}T_2') {=} -sw_2^2(B_i{\times}T_1{\times}1) + \ell_2(B_i{\times}1{\times}T_2),
\end{gather}
from which the matrix representation of $H^2(B_2) \ra H^2(\partial B_2)$ is easily derived after ordering the bases
appropriately.
  \end{proof}
Combining Corollary \ref{cor:coho2torsionB2} with Proposition \ref{prop:h2torsionfreeB2},
we have the following result.
\begin{proposition}
  \label{prop:h2completeB2}
A complete description of  the homomorphism $H^2(B_2) \ra H^2(\partial B_2)$ is provided by the diagram:
\begin{equation}
  \label{eq:H2compareB2}
  \xy \UCMT \xymatrix{
   \bZ/nw_2\bZ  \iso \Coker^1(i_0^* ,\; -i_1^*)  \ar@{>->}[r] \ar_{{\rm reduction}}[d] & H^2(B_2) \ar@{->>}[r] \ar[d]
    &  \Ker^2 (i_0^* ,\; -i_1^*) \iso \bZ^{2g}\ar^{\bigl( \begin{smallmatrix}-sw_2^2\cdot\Id_{2g} \\ \ell_2\cdot\Id_{2g}\end{smallmatrix} \bigr)}[d]
       \\
       \bZ/n\bZ \iso \Coker^1(i_0'^* ,\; -i_1'^*)      \ar@{>->}[r] & H^2(\partial B_2) \ar@{->>}[r]
       & \Ker^2 (i_0'^*, \; -i_1'^*) \iso \bZ^{2g} \dsm \bZ^{2g}  
    }
  \endxy  
\end{equation}
\qed
\end{proposition}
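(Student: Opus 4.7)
The plan is to read off the diagram directly from the Mayer--Vietoris comparison diagram \eqref{diag:B2bdyB2MVcomparo}, specialized to $q{=}2$. That diagram gives for each space the universal short exact sequence \eqref{diag:B2bdyB2}
\begin{equation*}
0 \lra \Coker^{1}(i_0'^*, -i_1'^*) \lra H^2(B_2) \lra \Ker^2(i_0'^*, -i_1'^*) \lra 0
\end{equation*}
and the analogous sequence for $\partial B_2$ involving $(i_0^*, -i_1^*)$, together with a vertical map between them induced by $(\id{\times}g)^*$. So the first step is simply to paste together these two short exact sequences vertically; naturality of the Mayer--Vietoris sequence guarantees the resulting ladder commutes.

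Next I would populate the three vertical arrows. The left arrow $\Coker^{1}(i_0'^*,-i_1'^*) \to \Coker^{1}(i_0^*,-i_1^*)$ has already been identified with $\bZ/nw_2\bZ \to \bZ/n\bZ$ given by reduction mod $n$; this is exactly the content of Corollary~\ref{cor:coho2torsionB2} (applied after noting that this cokernel computes the torsion subgroup of $H^2$, as verified on each side during the calculation of $\Ker^2$, which is torsion-free). The right arrow $\Ker^2(i_0'^*,-i_1'^*) \to \Ker^2(i_0^*,-i_1^*)$ was computed in Proposition~\ref{prop:h2torsionfreeB2} and equals the $(2g{\times}4g)$ matrix $\bigl(\begin{smallmatrix} -sw_2^2\,\Id_{2g} \\ \ell_2\,\Id_{2g} \end{smallmatrix}\bigr)$, obtained by applying $(\id{\times}g)^*$ to the preferred bases \eqref{H2basesB2} and \eqref{H2basesB2bdy} via the formulas \eqref{eq:rowconnector2} and \eqref{eq:h2B2bdyB2matrixrep}.

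Finally I would observe that the middle vertical arrow is the restriction $H^2(B_2)\to H^2(\partial B_2)$: the MV short exact sequence is natural in the pair $(B, \{\text{open cover}\})$ and our cover of $\partial B_2$ is the pullback of our cover of $B_2$ under the inclusion, so the map between middle terms is the cohomology restriction map. This gives the commutative diagram \eqref{eq:H2compareB2} with the stated identifications of rows and outer columns, at which point the proposition is established.

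The main (and essentially only) obstacle is a bookkeeping one: ensuring that the preferred bases used to identify $\Ker^2$ and $\Coker^1$ on the $B_2$-side and on the $\partial B_2$-side are compatible with $(\id{\times}g)^*$, so that the right-hand square really is described by the matrix above. That compatibility was already the content of Proposition~\ref{prop:h2torsionfreeB2} and of the cohomological computation of $g^*$ in Proposition~\ref{prop:gonpione}, so no new calculation is needed here; one just has to verify that these earlier computations line up with the short exact sequences appearing as the rows of \eqref{eq:H2compareB2}.
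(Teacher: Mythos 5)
Your proposal is correct and is essentially the paper's own argument: the proposition is obtained precisely by splicing Corollary~\ref{cor:coho2torsionB2} (the reduction map on the cokernel/torsion part) and Proposition~\ref{prop:h2torsionfreeB2} (the matrix $\bigl(\begin{smallmatrix}-sw_2^2\,\Id_{2g}\\ \ell_2\,\Id_{2g}\end{smallmatrix}\bigr)$ on the kernel/torsion-free part) into the natural ladder of short exact sequences coming from the Mayer--Vietoris comparison \eqref{diag:B2bdyB2}. The only discrepancy is notational: your placement of the primed maps (primed for $B_2$, unprimed for $\partial B_2$) matches the convention of \eqref{diag:B2bdyB2MVcomparo} and of the $B_1$ case, whereas the displayed diagram in the proposition has them interchanged.
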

\begin{proposition}
  \label{prop:Eulerp2}
  The Euler class $e_2=e(p_2) \in H^2(C_2^3; \bZ)$ of
  $  p_2 \colon (B_2, \partial B_2) \ra C_2^3$
  satisfies
  \begin{equation*}
  p_2^*(e_2) = (n, 0) \in \bZ/nw_2\bZ \dsm \bZ^{2g} \iso H^2(B_2; \bZ) \iso H^2(C_2^3; \bZ).   
  \end{equation*}
\end{proposition}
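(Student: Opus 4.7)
The plan is to imitate the argument used for Proposition \ref{prop:Eulerp1}, replacing the data associated with $p_1$ throughout by the data associated with $p_2$. The Thom isomorphism $\Phi \colon H^i(C_2^3;\bZ) \to H^{i+2}(B_2, \partial B_2;\bZ)$ identifies the Gysin sequence of the disc-bundle pair with the cohomology long exact sequence of the pair $(B_2, \partial B_2)$, and under this identification the cup product with the Euler class $e_2$ corresponds to the map $k_2^* \colon H^*(B_2, \partial B_2) \to H^*(B_2)$, since $k_2^* \com \Phi(x) = p_2^*(x) \cup k_2^*(U) = p_2^*(x \cup e_2)$. In particular, the value of $e_2$ is detected by the image of $k_2^* \colon H^2(B_2, \partial B_2) \to H^2(B_2)$, because this corresponds to $\cup e_2 \colon H^0(C_2^3) \to H^2(C_2^3)$.

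Next I would set up the ladder
\begin{equation*}
  \xy \UCMT \xymatrix@C-1.5ex{
     \Coker{\bigl(H^1(B_2) \to H^1(\partial B_2)\bigr)} \ar[r] \ar[d]  & H^2(B_2, \partial B_2) \ar^{k_2^*}[r] \ar[d] & H^2(B_2) \ar^{\iso}[d] \ar[r] & H^2(\partial B_2) \ar^{\iso}[d]
     \\
     \bZ \ar[r] & \bZ \ar[r] & \bZ/nw_2\bZ \dsm \bZ^{2g} \ar[r] & \bZ/n\bZ \dsm \bZ^{2g} \dsm \bZ^{2g}
  }
  \endxy
\end{equation*}
The identification of the cokernel on the left uses Proposition \ref{prop:allh1B2} to see that $H^1(B_2) \to H^1(\partial B_2)$ is the inclusion of the torsion-free summand $\bZ^{2g} \hookrightarrow \bZ \dsm \bZ^{2g}$, so the cokernel is $\bZ$. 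Poincar\'{e}--Lefschetz duality gives $H^2(B_2, \partial B_2;\bZ) \iso H_3(B_2;\bZ) \iso \bZ$, since $B_2$ deformation retracts to the closed orientable $3$-manifold $C_2^3$. Combined with Corollary \ref{cor:coho2torsionB2} (which determines the map on torsion to be reduction $\bZ/nw_2\bZ \twoheadrightarrow \bZ/n\bZ$, hence of kernel order $w_2$), an easy diagram chase forces the image of $k_2^*$ to be the subgroup of order $w_2$ in $H^2(B_2; \bZ) \iso \bZ/nw_2\bZ \dsm \bZ^{2g}$, landing entirely inside the torsion summand.

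The unique such subgroup is generated by $(n, 0)$, so I conclude that $p_2^*(e_2) = (n, 0)$ up to a unit, which in this cyclic context fixes $e_2$ up to a choice of orientation for the disc fiber. The only step requiring real care is the diagram chase that pins down the image of $k_2^*$ inside $H^2(B_2)$: one must verify that the map $H^2(B_2, \partial B_2) \to H^2(B_2)$ factors as $\bZ \twoheadrightarrow \bZ/w_2\bZ \hookrightarrow \bZ/nw_2\bZ \dsm \bZ^{2g}$ with the second arrow given by $1 \mapsto (n, 0)$. The surjection onto $\bZ/w_2\bZ$ is the cokernel of $k_2^*$ on the subgroup coming from $\Coker(H^1(B_2) \to H^1(\partial B_2))$, which is identified with the kernel of reduction mod $n$, i.e.\ the subgroup of order $w_2$ in $\bZ/nw_2\bZ$ generated by $n$. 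This is the main conceptual content; the rest is organized exactly as in the proof of Proposition \ref{prop:Eulerp1}.
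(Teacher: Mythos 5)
Your proposal is correct and follows exactly the route the paper intends: the paper's own proof of Proposition \ref{prop:Eulerp2} simply says it is the same as that of Proposition \ref{prop:Eulerp1} with the necessary changes, and your write-up carries out precisely that argument (Thom isomorphism identifying $\cup\, e_2$ with $k_2^*$, the cokernel computation from Proposition \ref{prop:allh1B2}, Poincar\'{e}--Lefschetz duality giving $H^2(B_2,\partial B_2)\iso\bZ$, and Corollary \ref{cor:coho2torsionB2} forcing the image of $k_2^*$ to be the order-$w_2$ subgroup generated by $(n,0)$). No gaps; your version just makes explicit what the paper leaves to the reader.
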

\begin{proof}
  Making a few necessary changes, the proof is the same as that for Proposition \ref{prop:Eulerp1}.
\end{proof}
In parallel to Corollary \ref{cor:h3B1} we also note the following corollary.
\begin{corollary}
  \label{cor:h3B2}
  We have
  \begin{equation*}
    H^2(\partial B_2) \lra H^3(B_2 , \partial B_2) \stackrel{0}{\lra}  H^3(B_2),
  \end{equation*}
  so that $H^2(\partial B_2) \ra H^3(B_2, \partial B_2)$ is surjective and $H^3(B_2) \ra H^3(\partial B_2)$ is injective. \qed
\end{corollary}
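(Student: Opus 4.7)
The plan is to mimic the proof of Corollary \ref{cor:h3B1} with the bundle $p_2\colon(B_2,\partial B_2)\ra C_2^3$ replacing $p_1\colon(B_1,\partial B_1)\ra C_1^3$. Concretely, I would write down the ladder comparing the cohomology exact sequence of the pair $(B_2,\partial B_2)$ with the Gysin sequence of the disc bundle $p_2$, using the Thom isomorphism $\Phi\colon H^q(C_2^3)\ra H^{q+2}(B_2,\partial B_2)$ to identify the relevant groups:
\begin{equation*}
  \xy \UCMT \xymatrix{
      H^2(\partial B_2) \ar[r] &  H^3(B_2, \partial B_2) \ar[r] & H^3(B_2)
      \\
      H^2(\partial B_2) \ar[r] \ar^{=}[u] &  H^1(C_2^3) \ar^{\cup e_2}[r] \ar^{\Phi}_{\iso}[u] &  H^3(C_2^3) \ar^{p_2^*}_{\iso}[u]
    }
  \endxy
\end{equation*}

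Next I would invoke Proposition \ref{prop:C23cohomology} to observe that $H^1(C_2^3;\bZ)\iso\bZ^{2g}$ and $H^3(C_2^3;\bZ)\iso\bZ$ are both torsion-free abelian groups. Then I would invoke Proposition \ref{prop:Eulerp2}, which identifies the Euler class $e_2\in H^2(C_2^3;\bZ)$ with $(n,0)\in\bZ/nw_2\bZ\dsm\bZ^{2g}$. In particular, $e_2$ is a torsion class, annihilated by $w_2$.

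Since cup product is $\bZ$-bilinear, the map $\cup e_2\colon H^1(C_2^3)\ra H^3(C_2^3)$ sends a torsion-free group to a torsion-free group through multiplication by a torsion class; hence its image is divisible by every positive integer and consequently trivial. So $\cup e_2=0$. Transferring this through the ladder, the composition $H^2(\partial B_2)\ra H^3(B_2,\partial B_2)\ra H^3(B_2)$ vanishes, which by exactness gives the surjectivity of the first arrow and the injectivity of $H^3(B_2)\ra H^3(\partial B_2)$. There is no genuine obstacle here; the only bookkeeping to check is that Proposition \ref{prop:Eulerp2} really does produce a torsion Euler class (it does, since $w_2\cdot(n,0)=0$ in $\bZ/nw_2\bZ$), and that Proposition \ref{prop:C23cohomology} supplies torsion-free groups in the relevant two degrees.
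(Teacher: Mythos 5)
Your proposal is correct and follows essentially the same route as the paper, which simply transports the proof of Corollary \ref{cor:h3B1} to $(B_2,\partial B_2)$ via the Thom/Gysin ladder, Proposition \ref{prop:C23cohomology}, and Proposition \ref{prop:Eulerp2}. One small wording fix: the image of $\cup e_2$ is not ``divisible by every positive integer'' --- rather each element $x\cup e_2$ is annihilated by $w_2$ since $w_2e_2=0$, hence is a torsion element of the torsion-free group $H^3(C_2^3)\iso\bZ$ and therefore zero.
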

\section{Homology calculations} \label{homologycalc}
One goal of this section is to compute the cohomology of $M = M^3_g(n) \star_{\ell_1, \ell_2} S^3_{\bw} $,
an abbreviation we have used throughout the paper.
In subsection \ref{easycohomology} we compute $H_1$, $H^1$, $H_4$, and $H^4$.
Also,  $B_1{\cap}B_2 = \partial B_1 = \partial B_2$ is the common boundary of
$B_1$ and $B_2$, which are the total spaces of $D^2$-bundles over spaces
$C_1^3= C_1^3(nw_1)$ and $C_2^3 = C_2^3(nw_2)$.
We have seen in Propositions \ref{prop:C13cohomology} and \ref{prop:C23cohomology}
that $C_1^3$ and $C_2^3$ are circle bundles over $\Sigma_g$ with Euler classes $nw_1$ and $nw_2$, respectively,
and we have calculated their integer cohomology.

There are two linking pairings associated to $M$.  Writing $TH_q(M)$ for the torsion subgroup of $H_q(M;\bZ)$,
they are
\begin{equation*}
  TH_1(M) \times T_3(M) \ra \bQ/\bZ \quad \text{and} \quad TH_2(M) \times TH_2(M) \ra \bQ/\bZ.
\end{equation*}
We use a formulation of the linking pairings in terms of cohomology, as presented in
section \ref{linking}. 
The primary input for the calculation of the pairings is 
cohomology of $M$ in relation to the cohomology of
$B_1$, $B_2$, and $B_1{\cap}B_2$,  with $\bZ$- as well as $\bQ/\bZ$-coefficients.
\subsection{Homology and cohomology in dimensions one and four}
\label{easycohomology}
It is easy to compute these homology and cohomology groups by appeal to the Hurewicz theorem
and to Poincar\'{e} duality.
\begin{proposition}
  \label{prop:HMdimsoneandfour}
  We have
  \begin{equation*}
    H_1(M; \bZ) \iso \bZ^{2g}{\dsm}\bZ/d\bZ, \quad H^1(M;\bZ) \iso \bZ^{2g}
  \end{equation*}
  and
  \begin{equation*}
    H_4(M; \bZ) \iso \bZ^{2g},\quad H^4(M;\bZ) \iso \bZ^{2g}{\dsm}\bZ/d\bZ,
  \end{equation*}
  where $d = \gcd(n,\ell_2)$.
\end{proposition}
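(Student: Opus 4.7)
The plan is to compute $H_1(M;\bZ)$ directly from the presentation of $\Gamma_1 = \pi_1(M)$ in Theorem \ref{theorem:pioneM} by abelianization, then obtain the remaining three groups by appeal to the universal coefficient theorem and Poincar\'e duality.

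First I would use the presentation
\begin{equation*}
   \Gamma_1 \iso \lan a_i, b_i, c_2, 1 {\leq} i {\leq} g \;
              | \; [a_i , c_2], [b_i, c_2], \prod_{1 \leq i \leq g}[a_i, b_i]c_2^{nw_2}, c_2^{\ell_2} \ran
\end{equation*}
and abelianize. The commutator relations become trivial, the surface-relation product of commutators vanishes, and what remains in the abelianization is the free abelian group on $\{a_i,b_i\}$ together with the cyclic group on $c_2$ subject to the two relations $nw_2 c_2 = 0$ and $\ell_2 c_2 = 0$. Hence
\begin{equation*}
  H_1(M;\bZ) \iso \bZ^{2g} \dsm \bZ/\gcd(nw_2,\ell_2)\bZ.
\end{equation*}
The hypothesis \eqref{eq:relprime1} forces $\gcd(w_2,\ell_2){=}1$, so $\gcd(nw_2,\ell_2){=}\gcd(n,\ell_2){=}d$, which gives the claimed value.

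Next, the universal coefficient theorem yields
\begin{equation*}
  H^1(M;\bZ) \iso \Hom(H_1(M;\bZ),\bZ) \dsm \Ext(H_0(M;\bZ),\bZ) \iso \bZ^{2g},
\end{equation*}
since $H_0(M;\bZ)$ is free.

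Finally, $M$ is a closed orientable five-manifold (orientability being guaranteed by Definition \ref{join} under our standing coprimality assumptions), so Poincar\'e duality gives
\begin{equation*}
  H_4(M;\bZ) \iso H^1(M;\bZ) \iso \bZ^{2g} \quad \text{and} \quad H^4(M;\bZ) \iso H_1(M;\bZ) \iso \bZ^{2g} \dsm \bZ/d\bZ.
\end{equation*}
There is no real obstacle here; the only subtle point is checking that the two torsion relations on $c_2$ combine to a single $\bZ/d\bZ$ summand, which is where the coprimality of $w_2$ and $\ell_2$ is essential. All the heavy lifting was done already in Theorem \ref{theorem:pioneM}.
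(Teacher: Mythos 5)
Your proposal is correct and follows essentially the same route as the paper: abelianize the presentation of $\Gamma_1$ from Theorem \ref{theorem:pioneM}, use the coprimality of $w_2$ and $\ell_2$ coming from \eqref{eq:relprime1} to reduce $\gcd(nw_2,\ell_2)$ to $d=\gcd(n,\ell_2)$, then apply the universal coefficient theorem for $H^1$ and Poincar\'e duality for $H_4$ and $H^4$. No gaps.
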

\begin{proof}
  Returning to the final form of the presentation of $\Gamma_1=\pi_1(M)$ 
  \begin{equation*}
   \Gamma_1 \iso \lan a_i, b_i, c_2, 1 {\leq} i {\leq} g \;
              | \; [a_i , c_2], [b_i, c_2], \prod_{1 \leq i \leq g}[a_i, b_i]c_2^{nw_2}, c_2^{\ell_2} \ran,
  \end{equation*}
  the Hurewicz theorem says the first homology is the abelianization.
  We write the group multiplicatively, replace $c_2$ by $c$, and omit all the commutators, obtaining
  \begin{equation*}
    H_1(M; \bZ) \iso \lan a_i, b_i, c, 1 {\leq} i {\leq} g\; | \; c^{nw_2}, c^{\ell_2} \ran
    \iso \bZ^{2g} \dsm \lan c \; | \; c^{nw_2}, c^{\ell_2} \ran
  \end{equation*}
  Recalling the relation $r\ell_2 {-} sw_1w_2 = 1$, $\ell_2$ and $w_2$ are relatively prime,
  so it follows that $\gcd(nw_2, \ell_2) = \gcd(n, \ell_2) = d$.
  Since $\lan c \; | \; c^{nw_2}, c^{\ell_2} \ran = \lan c \; | \; c^{\gcd(nw_2, \ell_2)} \ran$,
  the homology result follows. By the universal coefficient theorem
  \begin{equation*}
    H^1( M; \bZ) \iso \Hom( H_1(M; \bZ) , \bZ) \iso \bZ^{2g}.
  \end{equation*}
  From Definition \ref{join}, the manifold $M$ is orientable, so
  \begin{equation*}
    H^4( M ; \bZ) \iso H_1(M ; \bZ) \iso \bZ^{2g}{\dsm}\bZ/d\bZ
    \quad
    \text{and}
    \quad
    H_4( M; \bZ) \iso H^1(M; \bZ) \iso \bZ^{2g}.
  \end{equation*}
  by Poincar\'{e} duality.
\end{proof}
  \subsection{Cohomology in dimensions two and three}
  \label{middledimensions}
  We analyse the Mayer-Vietoris sequence for $M$ covered by $B_1$ and $B_2$.
  Recall that $B_1{\cap}B_2{=}\partial B_1{=} \partial B_2$,
  so we are reconfiguring results of Section \ref{gysinsequences}.
  In diagram \eqref{diag:middlemain}, we start with $\bZ$-coefficients and omit $\bZ$ from the notation to save space. 
  \begin{equation} \label{diag:middlemain} 
    \xy \UCMT \xymatrix@R=3ex{
      H^{q-1}(B_1{\cap}B_2) \ar[r] & H^q(M) \ar^(0.40){
        \bigl( \begin{smallmatrix}
          k_1^* \\ k_2^*
        \end{smallmatrix} \bigr)
      }[r] & H^q(B_1){\dsm}H^q(B_2) \ar^(0.55){(j_1^*, -j_2^*)}[r] & H^q(B_1{\cap}B_2) \ar[r]
        & H^{q+1}(M) 
    }
    \endxy  
  \end{equation}
  As usual, we analyse short exact sequences
  \begin{equation}
    \label{seq:cokerkerseqM}
    0 \ra \Coker^{q-1}{(j_1^*, -j_2^*)} \ra H^q(M) \ra \Ker^q{(j_1^*, -j_2^*)} \ra 0.
  \end{equation}
  The crucial information for the analysis is provided by the following proposition.
  \begin{proposition}
    \label{prop:MVBHmiddle}
    The map $(j_1^*, -j_2^*) \colon H^2(B_1){\dsm}H^2(B_2) \ra H^2(B_1{\cap}B_2)$ splits into a direct
    sum of the restriction to the torsion subgroup of the domain and the restriction to the torsion
    free subgroup of the domain mapping into the torsion-free part of the codomain.
    Indeed, the map of torsion subgroups is
    \begin{equation*}
      ({\rm red}, -{\rm red}) \colon \bZ/nw_1\bZ \dsm \bZ/nw_2\bZ \lra \bZ/n\bZ
    \end{equation*}
    and the map of torsion-free subgroups is isomorphic to
    \begin{equation} \label{eq:h2Mintegral}
          \Bigl(  \id \colon \bZ^{2g} \ra \bZ^{2g} \Bigr)
        \ten
          \begin{pmatrix}
 \begin{pmatrix}
    1   & sw_2^2
    \\
    0   &-\ell_2
 \end{pmatrix}\colon \bZ^2 \ra \bZ^2
         \end{pmatrix}
        \colon \bZ^{2g} \dsm \bZ^{2g} \lra \bZ^{2g} \dsm \bZ^{2g}.
      \end{equation}
  \end{proposition}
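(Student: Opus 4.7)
The plan is to bootstrap from Propositions \ref{prop:h2completeB1} and \ref{prop:h2completeB2}, which already give complete descriptions of $j_1^*$ and $j_2^*$ as block-decomposed maps. The key preliminary observation is that both of those propositions coordinatize the common codomain $H^2(B_1{\cap}B_2;\bZ) \iso H^2(\partial B_1;\bZ) \iso H^2(\partial B_2;\bZ)$ using the same K\"unneth basis (the one derived from $\{A_i{\times}T_1{\times}1, B_i{\times}T_1{\times}1, A_i{\times}1{\times}T_2, B_i{\times}1{\times}T_2\}$ together with the torsion class), so the two collections of data can be concatenated without any re-coordinatization.

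On torsion subgroups, the top rows of the diagrams \eqref{eq:H2compareB1} and \eqref{eq:H2compareB2} identify $j_1^*$ and $j_2^*$ restricted to the respective $\Coker^1$ summands as the reductions $\bZ/nw_i\bZ \lra \bZ/n\bZ$, so $(j_1^*, -j_2^*)$ on the torsion subgroup of $H^2(B_1){\dsm}H^2(B_2)$ is exactly $({\rm red}, -{\rm red})$ as claimed. On torsion-free summands, Proposition \ref{prop:h2completeB1} presents $j_1^*$ on $\Ker^2$ by the column $\bigl(\begin{smallmatrix}\Id_{2g}\\ 0\end{smallmatrix}\bigr)$ and Proposition \ref{prop:h2completeB2} presents $j_2^*$ on $\Ker^2$ by $\bigl(\begin{smallmatrix}-sw_2^2\Id_{2g}\\ \ell_2\Id_{2g}\end{smallmatrix}\bigr)$. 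Concatenating them as columns (and negating the second column) yields the $4g{\times}4g$ block matrix
\begin{equation*}
\begin{pmatrix}\Id_{2g} & sw_2^2\Id_{2g} \\ 0 & -\ell_2\Id_{2g}\end{pmatrix},
\end{equation*}
which is exactly the Kronecker product $\Id_{2g}\ten\bigl(\begin{smallmatrix}1 & sw_2^2 \\ 0 & -\ell_2\end{smallmatrix}\bigr)$ of \eqref{eq:h2Mintegral} after the bookkeeping reorganization that groups together, for each $H^1(U)$-generator $e_i \in \{A_i, B_i\}$, its $T_1'$- and $T_2'$-partners.

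The one genuine technicality is legitimizing the word \emph{splits}, i.e., verifying that the torsion-free complements in $H^2(B_1){\dsm}H^2(B_2)$ can be chosen to map into the torsion-free complement of $H^2(B_1{\cap}B_2)$ rather than diagonally across. Each of the three short exact sequences $0 \to \Coker^1 \to H^2(\cdot) \to \Ker^2 \to 0$ splits because $\Ker^2$ is free abelian. Since the maps on torsion subgroups are surjective reductions, any off-diagonal contribution (torsion-free to torsion) in $(j_1^*, -j_2^*)$ can be absorbed by adjusting the choice of splitting on the source by a homomorphism from $\bZ^{2g}$ into the torsion. This adjustment is routine but is the only step of the argument that is not purely cosmetic; after making these compatible choices, the block presentation above becomes the honest direct-sum decomposition of $(j_1^*, -j_2^*)$ asserted in the proposition.
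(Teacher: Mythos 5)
Your proof is correct and follows essentially the paper's own route: the paper's argument is precisely to combine Propositions \ref{prop:h2completeB1} and \ref{prop:h2completeB2}, which share the same preferred K\"unneth coordinates on $H^2(B_1{\cap}B_2)$, and then reorder the bases so that the concatenated columns become $\Id_{2g}\ten\bigl(\begin{smallmatrix}1 & sw_2^2\\ 0 & -\ell_2\end{smallmatrix}\bigr)$, exactly as you do. Your only addition is the final paragraph justifying the literal direct-sum splitting by re-choosing the free complement in the domain (possible because the torsion maps are surjective reductions and the complement is free, hence projective); the paper leaves this implicit, and your argument for it is sound, up to the harmless slip that the relevant free complement is $\bZ^{2g}{\dsm}\bZ^{2g}$ rather than $\bZ^{2g}$.
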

  \begin{proof}
    Combine Propositions \ref{prop:h2completeB1} and \ref{prop:h2completeB2} for both assertions.
    The assertion about the map on torsion-free subgroups follows after ordering the preferred bases appropriately.
  \end{proof}
We have an immediate corollary. 
  \begin{corollary}
    \label{cor:H2MVrational}
    In the Mayer-Vietoris sequence with rational coefficients
    \begin{equation*}
   (j_1^*, -j_2^*) \colon      H^2(B_1;\bQ) \dsm H^2(B_2; \bQ) \lra H^2(B_1{\cap}B_2;\bQ) 
 \end{equation*}
 is an isomorphism. \qed
\end{corollary}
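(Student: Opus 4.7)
The plan is to simply tensor the detailed integral description of $(j_1^*, -j_2^*)$ provided by Proposition \ref{prop:MVBHmiddle} with $\bQ$, exploiting the flatness of $\bQ$ over $\bZ$. Under this operation the torsion summands $\bZ/nw_1\bZ \dsm \bZ/nw_2\bZ$ and $\bZ/n\bZ$ vanish entirely, so the rational map
\begin{equation*}
  (j_1^*, -j_2^*)\ten \bQ \colon \bigl(H^2(B_1) \dsm H^2(B_2)\bigr)\ten \bQ \lra H^2(B_1{\cap}B_2) \ten \bQ
\end{equation*}
reduces to the rationalization of the torsion-free block \eqref{eq:h2Mintegral}, namely
\begin{equation*}
  \id_{\bQ^{2g}} \ten
  \begin{pmatrix}
    1 & sw_2^2 \\ 0 & -\ell_2
  \end{pmatrix}
  \colon \bQ^{2g}{\dsm}\bQ^{2g} \lra \bQ^{2g}{\dsm}\bQ^{2g}.
\end{equation*}

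Next I would verify that this square matrix is invertible over $\bQ$ by computing the determinant of the $2{\times}2$ block, which is $-\ell_2$. Since $\ell_2 \neq 0$ (indeed, our blanket hypotheses include $\ell_2 > 0$ and $\gcd(\ell_2, w_1w_2)=1$), the $2{\times}2$ block is invertible over $\bQ$, hence so is the tensor product with $\id_{\bQ^{2g}}$. Equivalently, the full matrix has determinant $(-\ell_2)^{2g}\neq 0$. Thus the rationalized map is an isomorphism of $\bQ$-vector spaces of equal dimension $4g$, as claimed.

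There is really no obstacle here; the corollary is an arithmetic consequence of Proposition \ref{prop:MVBHmiddle} once one observes both that all torsion drops out rationally and that the ranks of the torsion-free parts of the source and target (each isomorphic to $\bZ^{2g}{\dsm}\bZ^{2g}$, as recorded in Propositions \ref{prop:h2torsionfreeB1} and \ref{prop:h2torsionfreeB2}) agree. The only point worth a moment's attention is remembering which bases of $H^2(\partial B_1)=H^2(\partial B_2)=H^2(B_1{\cap}B_2)$ are used in the two halves of \eqref{eq:h2Mintegral} so that the combined matrix is assembled correctly; this is already handled in the proof of Proposition \ref{prop:MVBHmiddle}.
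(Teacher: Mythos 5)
Your argument is correct and is exactly the reasoning the paper intends: the corollary is stated as an immediate consequence of Proposition \ref{prop:MVBHmiddle}, since rationalization kills the torsion summands and the remaining block $\id_{\bZ^{2g}}\ten\bigl(\begin{smallmatrix}1 & sw_2^2\\ 0 & -\ell_2\end{smallmatrix}\bigr)$ has nonzero determinant $(-\ell_2)^{2g}$ over $\bQ$. Your write-up just makes explicit the determinant check that the paper leaves to the reader.
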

It follows that the rational Mayer-Vietoris sequence breaks up into exact sequences
\begin{gather}
  0 \ra H^1(M;\bQ) \ra H^1(B_1;\bQ){\dsm}H^1(B_2;\bQ) \stackrel{(j_1^*,-j_2^*)}{\lra} H^1(B_1{\cap}B_2;\bQ) \ra H^2(M;\bQ) \ra 0
  \label{seq:h2Mrational}
  \\
  0 \ra H^3(M;\bQ) \ra H^3(B_1;\bQ){\dsm}H^3(B_2;\bQ) \stackrel{(j_1^*,-j_2^*)}{\lra} H^3(B_1{\cap}B_2;\bQ) \ra H^4(M;\bQ) \ra 0
  \label{seq:h3Mrational}
\end{gather}
Combining Propositions \ref{prop:allh1B1} and \ref{prop:allh1B2} and the exact sequence \eqref{seq:h2Mrational}, we find
\begin{equation*}
(j_1^*,-j_2^*) \colon H^1(B_1;\bQ){\dsm}H^1(B_2;\bQ) \ra H^1(B_1{\cap}B_2;\bQ)  
\end{equation*}
has the representation
\begin{equation*}
  \begin{pmatrix}
    0_{1,2g} & 0_{1,2g}
               \\
    \Id_{2g} & -\Id_{2g}
  \end{pmatrix}
  \colon \bQ^{2g} \dsm \bQ^{2g} \lra \bQ \dsm \bQ^{2g},
\end{equation*}
so that $H^2(M;\bQ) \iso \bQ$.

Examining now  the exact sequence \eqref{seq:h3Mrational},
$H^3(B_1{\cap}B_2;\bQ){\iso}\bQ^{2g+1}$ 
from Proposition \ref{prop:allh1B1} and Poincar\'{e} duality.
From Proposition \ref{prop:HMdimsoneandfour},  $H^4(M;\bQ) \iso \bQ^{2g}$, 
and it follows that there is a short exact sequence
\begin{equation*}
0 \ra  H^3(M;\bQ) \ra \bQ \dsm \bQ \ra \bQ = \Ker{\bigl(H^3(B_1{\cap}B_2;\bQ) \ra H^4(M;\bQ)\bigr)} \ra 0.
\end{equation*}
Therefore,  $H^3(M;\bQ) \iso \bQ$. 
  \begin{corollary}
    \label{MVBHtwo}
    We have
    \begin{equation}
      \label{eq:HMtwo}
      H^2(M; \bZ) \iso \bZ/d\bZ \dsm \bZ, \quad \text{where $d= \gcd(n, \ell_2)$.}
    \end{equation}
  \end{corollary}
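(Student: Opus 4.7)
The plan is a short application of the universal coefficient theorem, using ingredients already assembled above. Specifically, the rational Mayer-Vietoris computation preceding the corollary yields $H^2(M;\bQ)\iso\bQ$, so the $\bZ$-rank of $H_2(M;\bZ)$ equals $1$; and Proposition~\ref{prop:HMdimsoneandfour} identifies the torsion subgroup of $H_1(M;\bZ)$ as $\bZ/d\bZ$. The universal coefficient theorem then gives
\begin{equation*}
H^2(M;\bZ)\iso \Hom(H_2(M;\bZ),\bZ)\dsm\Ext(H_1(M;\bZ),\bZ)\iso \bZ\dsm\bZ/d\bZ,
\end{equation*}
which is exactly the claim.

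For contrast, one could try to deduce the same result directly from the integer Mayer-Vietoris sequence \eqref{seq:cokerkerseqM}. By Proposition~\ref{prop:MVBHmiddle} the map $(j_1^*,-j_2^*)$ splits into its torsion part $(\mathrm{red},-\mathrm{red})\colon\bZ/nw_1\bZ\dsm\bZ/nw_2\bZ\ra\bZ/n\bZ$ and a torsion-free part represented by a matrix whose determinant is $-\ell_2\neq 0$. Thus the torsion-free part is injective, while a short calculation (using $\gcd(w_1,w_2)=1$, so that $(1,1)$ generates the kernel) shows the torsion kernel is cyclic of order $nw_1w_2$. Combined with the readily verified identity $\Coker^1(j_1^*,-j_2^*)\iso\bZ$ (essentially repeating the rational analysis over $\bZ$), this produces an extension
\begin{equation*}
0\lra\bZ\lra H^2(M;\bZ)\lra\bZ/(nw_1w_2)\bZ\lra 0.
\end{equation*}

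The main obstacle in this second route is pinning down the extension class: on its own, the displayed extension only forces $H^2(M;\bZ)\iso\bZ\dsm\bZ/m\bZ$ for some divisor $m$ of $nw_1w_2$, and showing $m=d$ demands extra input about how the generator of the kernel lifts into $H^2(M;\bZ)$. The UCT route neatly sidesteps this, because $\bZ/d\bZ$ is already visible in $H_1(M;\bZ)$ via the presentation in Theorem~\ref{theorem:pioneM}, whereas $nw_1w_2$ only arises as the total order of the kernel and tells us nothing about the correct cyclic factor without additional bookkeeping.
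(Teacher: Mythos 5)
Your first paragraph is exactly the paper's argument: the rational Mayer--Vietoris computation gives $H^2(M;\bQ)\iso\bQ$, hence rank one, and then the universal coefficient theorem together with $H_1(M;\bZ)\iso\bZ^{2g}\dsm\bZ/d\bZ$ from Proposition~\ref{prop:HMdimsoneandfour} yields $H^2(M;\bZ)\iso\bZ\dsm\bZ/d\bZ$. Your contrasting remarks about the integral Mayer--Vietoris route and its unresolved extension problem are accurate but not needed; the proof as given is correct and coincides with the paper's.
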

  \begin{proof}
    Our observation that $H^2(M; \bQ) \iso \bQ$ implies that  $H_2(M;\bZ)$ and $H^2(M;\bZ)$
    are of rank one. 
    Now the universal coefficient theorem
    \begin{equation*}
      0 \ra \Ext(H_1(M ;\bZ) , \bZ) \ra H^2(M; \bZ) \ra \Hom( H_2(M;\bZ) , \bZ) \ra 0
    \end{equation*}
    together with the computation
    $H_1(M;\bZ) \iso \bZ^{2g}{\dsm} \bZ/d\bZ$ given in
    Proposition \ref{prop:HMdimsoneandfour}
    gives the result.
  \end{proof}
  \begin{corollary}
    \label{MVBHthree}
 We have
    \begin{equation*}
      H^3(M;\bZ) \iso (\bZ/\ell_2\bZ)^{2g} \dsm \bZ \iso H^1(U) \ten \bZ/\ell_2\bZ \dsm \bZ.
    \end{equation*}
  \end{corollary}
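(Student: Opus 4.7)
The plan is to continue the Mayer-Vietoris sequence \eqref{diag:middlemain} one step beyond the analysis of $H^2(M;\bZ)$ and to extract $H^3(M;\bZ)$ from the universal short exact sequence
\begin{equation*}
  0 \lra \Coker^2{(j_1^*, -j_2^*)} \lra H^3(M;\bZ) \lra \Ker^3{(j_1^*, -j_2^*)} \lra 0.
\end{equation*}
Both flanking groups are essentially already computed.

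First I would compute $\Coker^2{(j_1^*, -j_2^*)}$ using Proposition \ref{prop:MVBHmiddle}, which splits this map as a direct sum of its restriction to torsion and its restriction to torsion-free summands. On torsion, the map $(\mathrm{red}, -\mathrm{red}) \colon \bZ/nw_1\bZ \dsm \bZ/nw_2\bZ \lra \bZ/n\bZ$ is already surjective through the first coordinate alone (reduction mod $n$ on $\bZ/nw_1\bZ$ is surjective), so the contribution to the cokernel is $0$. On the torsion-free part, the map is $\Id_{2g} \ten N$ with
\begin{equation*}
  N = \begin{pmatrix} 1 & sw_2^2 \\ 0 & -\ell_2 \end{pmatrix} \colon \bZ^2 \lra \bZ^2;
\end{equation*}
since $\det N = -\ell_2$, Smith normal form gives $\Coker N \iso \bZ/\ell_2\bZ$, whence $\Coker^2{(j_1^*, -j_2^*)} \iso (\bZ/\ell_2\bZ)^{2g}$.

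Next I would determine $\Ker^3{(j_1^*, -j_2^*)} \colon H^3(B_1;\bZ) \dsm H^3(B_2;\bZ) \lra H^3(B_1{\cap}B_2;\bZ)$. Propositions \ref{prop:C13cohomology} and \ref{prop:C23cohomology} yield $H^3(B_i;\bZ) \iso \bZ$, so the domain is $\bZ^2$, and Corollaries \ref{cor:h3B1} and \ref{cor:h3B2} assert that each individual restriction $j_i^*$ is injective. The rank of the image is then at least one and at most two. However, the rational Mayer-Vietoris sequence \eqref{seq:h3Mrational}, combined with the rank counts $H^3(M;\bQ) \iso \bQ$ and $H^3(B_i;\bQ) \iso \bQ$ established just before Corollary \ref{MVBHtwo}, forces $(j_1^*, -j_2^*)\ten\bQ$ to have rank exactly one. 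Hence the kernel of $(j_1^*, -j_2^*)$ is a rank-one subgroup of $\bZ^2$, i.e.\ $\Ker^3{(j_1^*, -j_2^*)} \iso \bZ$.

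The resulting extension
\begin{equation*}
  0 \lra (\bZ/\ell_2\bZ)^{2g} \lra H^3(M;\bZ) \lra \bZ \lra 0
\end{equation*}
splits because the quotient is free, giving $H^3(M;\bZ) \iso (\bZ/\ell_2\bZ)^{2g} \dsm \bZ$. The alternative description $(\bZ/\ell_2\bZ)^{2g} \iso H^1(U) \ten \bZ/\ell_2\bZ$ is immediate from the tensor decomposition $\Id_{2g} \ten N$ of the torsion-free part of $(j_1^*, -j_2^*)$ in Proposition \ref{prop:MVBHmiddle}, where the $\Id_{2g}$ factor is precisely the identity on $H^1(U) \iso \bZ^{2g}$. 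The main bookkeeping obstacle is being careful that the cokernel truly splits off torsion from torsion-free parts, but this is guaranteed by the block-diagonal decomposition in Proposition \ref{prop:MVBHmiddle}; once this is granted, the computation is a matter of Smith normal form and a rank check.
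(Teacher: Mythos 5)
Your proof is correct and follows essentially the same route as the paper's: both extract $H^3(M;\bZ)$ from the Mayer--Vietoris extension $0\to\Coker^2{(j_1^*,-j_2^*)}\to H^3(M;\bZ)\to\Ker^3{(j_1^*,-j_2^*)}\to 0$, identify the cokernel as $\bZ^{2g}\ten(\bZ/\ell_2\bZ)$ via the block description of Proposition \ref{prop:MVBHmiddle} (with the torsion block contributing nothing), and use the rational computation around \eqref{seq:h3Mrational} to pin down $\Ker^3\iso\bZ$, after which the extension splits since the quotient is free. One small caution: the determinant of $N$ alone does not determine $\Coker N$ (e.g.\ a diagonal matrix with entries $2,2$ has determinant $4$ but non-cyclic cokernel); here the unit entry of $N$ gives Smith form with diagonal entries $1$ and $\ell_2$, which is in substance the explicit kernel--image argument the paper carries out, so your conclusion stands.
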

  \begin{proof}
    We compute the terms in the exact sequence \eqref{seq:cokerkerseqM} for $q{=}3$.
    First observe that
    \begin{equation}
      \label{eq:torsionh3a}
      \Coker{  \begin{pmatrix}
    1   & sw_2^2
    \\
    0   &-\ell_2.
 \end{pmatrix}\colon \bZ^2 \ra \bZ^2  } \iso \bZ/\ell_2\bZ.
    \end{equation}
    This follows from the fact that the homomorphism $\bZ^2 \ra \bZ/\ell_2\bZ$, $(\alpha,\beta) \mapsto \ell_2\alpha {+}sw_2^2\beta$
    has kernel equal to the image of
    $\bigl(
    \begin{smallmatrix}
      1 & sw_2^2 \\ 0 & -\ell_2 
    \end{smallmatrix} \bigr).$
    Obviously, the image is in the kernel of the homomorphism. If $\ell_2\alpha{+}sw_2^2\beta = \ell_2 \gamma$, we have
    $sw_2^2\beta = \ell_2(\gamma-\alpha)$. The hypothesis \eqref{eq:relprime1} $r\ell_2 - sw_1w_2 = 1$ implies that $\ell_2$ and $sw_2^2$
    are relatively prime.  Consequently, $\beta=\ell_2 \beta'$. Finally,
    \begin{equation*}
       \begin{pmatrix}
    1   & sw_2^2
    \\
    0   &-\ell_2.
       \end{pmatrix}
       \cdot
       \begin{pmatrix}
        \alpha+sw_2^2\beta'
         \\
         -\beta'
       \end{pmatrix}
       =
       \begin{pmatrix}
         \alpha \\ \beta
       \end{pmatrix}
     \end{equation*}
     proves that an element in the kernel of $\bZ^2 \ra \bZ/\ell_2\bZ$ is in the image of
     $\bigl(
    \begin{smallmatrix}
      1 & sw_2^2 \\ 0 & -\ell_2 
    \end{smallmatrix} \bigr).$
    It follows from \eqref{eq:h2Mintegral} that
    \begin{equation*}
      \Coker^2{(j_1^*, -j_2^*)} \iso  \bZ^{2g} \ten (\bZ/\ell_2\bZ) \iso (\bZ/\ell_2\bZ)^{2g}.
    \end{equation*}
    For the alternative formulation, note that the restriction map
    $H^2(B_1{\cap}B_2;\bZ) \ra H^1(U;\bZ)\ten H^1(S^1{\times}S^1;\bZ)$
    is an isomorphism on the torsion-free part of $H^2(B_1{\cap}B_2;\bZ)$
    by \eqref{eq:H2bdyB1torsionfree} from Proposition \ref{prop:h2torsionfreeB1}.
    
    The calculation
    $H^3(M;\bQ) \iso \Ker^3(j_1^*{\ten}\bQ, -j_2^*{\ten}\bQ) \iso \bQ$
    implies that $H^3(M;\bZ)$ is of rank one.
    Combining with 
    \begin{equation*}
      \xy \UCMT \xymatrix{
        H^3(M) \ar@{->>}[r] & \Ker^3{(j_1^*, -j_2^*)} = \Ker{\bigl(\bZ \dsm \bZ \ra H^3(B_1{\cap}B_2)\bigr)},
      }
      \endxy
    \end{equation*}
    it follows that the exact sequence \eqref{seq:cokerkerseqM} for $q{=}3$ evaluates to
    \begin{equation*}
      \xy \UCMT \xymatrix{
        0 \ar[r] & (\bZ/\ell_2\bZ)^{2g} \ar[r] & H^3(M) \ar[r] & \bZ \ar[r] & 0,
      }
      \endxy
    \end{equation*}
    which implies the stated isomorphism. 
  \end{proof}
  We summarize these calculations as follows.
  \begin{equation}
    \label{eq:ZcohomologyM}
     H^q(M^3_g(n) \star_{\ell_1, \ell_2} S^3_{\bw}; \bZ) \iso
     \begin{cases}
       \bZ, \quad \text{if $q{=}0$ or $q{=}5$,}
       \\
       \bZ^{2g}, \quad \text{if $q{=}1$,}
       \\
       \bZ/d \bZ \dsm \bZ, \quad \text{if $q{=}2$,}
       \\
       (\bZ/\ell_2 \bZ)^{2g} \dsm \bZ , \quad \text{if $q{=}3$.}
       \\
       \bZ/d\bZ \dsm \bZ^{2g}, \quad \text{if $q{=}4$,}
     \end{cases}
   \end{equation}
   We will also need the cohomology groups with $\bQ/\bZ$-coefficients.
   Before proceeding,
let us review some features of homology and cohomology with $\bQ/\bZ$-coefficients. For a finite
cyclic group $\bZ/m\bZ$ we have the obvious resolution and two exact sequences
\begin{align*}
  0 \ra \Tor( \bZ/m\bZ , \bQ/\bZ) \ra \bQ/\bZ &\stackrel{\cdot m}{\ra} \bQ/\bZ \ra  \bZ/m\bZ {\ten} \bQ/\bZ  \ra 0
\intertext{and} 
 0 \ra \Hom( \bZ/m\bZ, \bQ/\bZ) \ra \bQ/\bZ &\stackrel{\cdot m}{\ra} \bQ/\bZ \ra  \Ext( \bZ/m\bZ,\bQ/\bZ) \ra 0
\end{align*}
obtained by applying $\bQ/\bZ{\ten}-$ and $\Hom( -, \bQ/\bZ)$ to the resolution.
Since $\bQ/\bZ$ is divisible, multiplication by $m{\neq}0$ is surjective and we deduce
\begin{gather*}
  \bZ/m\bZ {\ten}  \bQ/\bZ = 0, \quad \Tor( \bZ/m\bZ, \bQ/\bZ) \iso \bZ/m\bZ; 
\\
  \Hom(\bZ/m\bZ, \bQ/\bZ) \iso \bZ/m\bZ, \quad \Ext(\bZ/m\bZ, \bQ/\bZ) = 0.
\end{gather*}
These computations apply to these universal coefficient theorems derivable from \cite[p.243, p.248]{Spanier}:
\begin{equation}
  \label{seq:QZcoefficients}
  \xy \UCMT \xymatrix@R=1em{
    0 \ar[r] & \Ext( H_{q-1}(X; \bZ) , \bQ/\bZ) \ar[r] & H^q(X; \bQ/\bZ) \ar[r] & \Hom( H_q(X; \bZ) , \bQ/\bZ) \ar[r] &  0,
    \\
    0 \ar[r] & H^q(X; \bZ) \ten \bQ/\bZ \ar[r] & H^q(X; \bQ/\bZ) \ar[r] & \Tor(H^{q+1}(X; \bZ) , \bQ/\bZ) \ar[r] & 0.
  }
  \endxy
\end{equation}
The sequences are natural in $X$ and split, but not naturally split. 

Applying these facts, we have
\begin{equation}
  \label{eq:QZcohomologyM}
   H^q(M^3_g(n) \star_{\ell_1, \ell_2} S^3_{\bw}; \bQ/\bZ) \iso
     \begin{cases}
       \bQ/\bZ, \quad \text{if $q{=}0$ or $q{=}5$,}
       \\
       \bZ/d \bZ \dsm (\bQ/\bZ)^{2g}, \quad \text{if $q{=}1$,}
       \\
       (\bZ/\ell_2\bZ)^{2g} \dsm \bQ/\bZ, \quad \text{if $q{=}2$,}
       \\
       \bZ/d \bZ \dsm \bQ/\bZ, \quad \text{if $q{=}3$.}
       \\
      (\bQ/\bZ)^{2g}, \quad \text{if $q{=}4$,}
     \end{cases}
\end{equation}
\section{Linking pairings}
\label{linking}
In this section we develop the linking pairings for the Sasaki manifolds 
$M^3_g(n) \star_{\ell_1, \ell_2} S^3_{\bw}$.
Concerning the self-linking number of a torsion class in a lens space, for example, the geometric procedure in 
Seifert and Threlfall
\cite{SeifertThrelfall}
calls for identifying a cycle representing the torsion class 
and disjoint from another representing cycle.  An integer multiple of the alternative cycle 
is the boundary of some chain.  Now count the intersections of this chain with the 
the original representating cycle.  Using a normalization procedure to account for
choices made, the result is a rational number.  Translating to  our notation
$L(p;1,q)$, the classical result assigns the self-linking number $q/p \in \bQ/\bZ$
to a generating torsion class. 

We adopt the approach as used by Milgram \cite{MilgramAWmfds}.
Applying the universal coefficient theorem in cohomology, it follows that the torsion subgroup
$TH_q(M; \bZ)$ of $H_q(M;\bZ)$
is isomorphic to a subgroup of $H^q(M; \bQ/\bZ)$ as well as to a subgroup of $H^{q+1}(M ;\bZ)$.
The universal Bockstein
\begin{equation*}
\beta \colon H^q(M;\bQ/\bZ) \ra H^{q+1}(M; \bZ)  
\end{equation*}
maps the first of these subgroups isomorphically to the other.
Since the homology and cohomology groups of our joins involve both torsion and torsion-free subgroups,
we operate on $H^q(M; \bQ/\bZ)/\Ker{\beta}$.

Suppose the $m$-manifold $M$ has orientation class $[M]\in H_m(M;\bZ)$. 
Then a linking pairing is defined in terms of the universal Bockstein homomorphism
 $\beta \colon H^{m-n-1}(M; \bQ/\bZ) \ra H^{m-n}(M; \bZ)$
and the cup product, as follows.
\begin{equation*}
  \lambda \colon H^n(M; \bQ/\bZ)/\Ker{\beta} \times H^{m-n-1}(M; \bQ/\bZ)/\Ker{\beta} \lra \bQ/\bZ
  \quad \text{by $\lambda(z, z') = \langle z \cup \beta(z') , [M] \rangle$.}
\end{equation*}

A comprehensive algebraic approach to linking forms is found on \cite[pp.334--339]{Ranicki_book},
where the following property of a linking form
$\lambda \colon TH_{n}(M){\times}TH_{m-n-1}(M) \ra \bQ/\bZ$
is noted.
\begin{equation*}
  \lambda(y,x) = (-1)^{(n+1)(m-n)} \lambda(x,y). 
\end{equation*}
Thus, we have a symmetric pairing for the torsion in the first homology of any manifold,
and a skew-symmetric pairing for torsion in the second homology of a 5-manifold.

Since $TH_1(M; \bZ) \iso TH_3(M; \bZ) \iso \bZ/d\bZ$, and
$TH_2(M; \bZ) \iso (\bZ/\ell_2)^{2g}$,
in the cohomological approach there are two linking pairings
\begin{equation*}
  H^1(M; \bQ/\bZ)/\Ker{\beta} \times H^3(M; \bQ/\bZ)/\Ker{\beta} \ra \bQ/\bZ
    \quad
    \text{and}
    \quad
  H^2(M; \bQ/\bZ)/\Ker{\beta} \times H^2(M; \bQ/\bZ)/\Ker{\beta} \ra \bQ/\bZ,
\end{equation*}
defined as compositions
\begin{gather*}
  H^1(M; \bQ/\bZ)/\Ker{\beta}\times  H^3(M; \bQ/\bZ)/\Ker{\beta}
  \stackrel{\id \times \beta}{\lra}H^1(M; \bQ/\bZ)/\Ker{\beta} \times H^4(M; \bZ) \stackrel{\cup}{\lra} H^5(M; \bQ/\bZ),
  \\
  H^2(M; \bQ/\bZ)/\Ker{\beta}\times  H^2(M; \bQ/\bZ)/\Ker{\beta}
  \stackrel{id \times \beta}{\lra}
  H^2(M;\bQ/\bZ)/\Ker{\beta} \times H^3(M; \bZ) \stackrel{\cup}{\lra} H^5(M;\bQ/\bZ),
\end{gather*}
respectively. 
To evaluate the compositions,  the essential point in each case is to evaluate the cup products
on particular elements of the domains.

There are three steps in the evaluation.
In the first step, 
we exploit the following commuting diagram.
\begin{equation}
  \label{eq:linkdiag2}
  \xy \UCMT \xymatrix@C-2.0ex{
    H^n(M;\bQ/\bZ) {\times} H^{5-n-1}(M;\bQ/\bZ) \ar^(0.53){\id \times \beta} [r]
      &    H^n(M; \bQ/\bZ) {\times} H^{5-n}(M; \bZ)  \ar^(0.60){\cup}[r]
         & H^5(M; \bQ/\bZ)
    \\
    &  H^n(M; \bQ/\bZ) {\times} H^{5-n}(M, B_2; \bZ) \ar^(0.60){\cup}[r] \ar^{\id \times h _2^*}[u] \ar_{k_1^* \times \rm{exc}}^{\iso}[d]
        &  H^5(M, B_2; \bQ/\bZ) \ar_{\iso}^{\rm{exc}}[d] \ar^{\iso}_{\id \times h_2^*}[u]
    \\
    &  H^n(B_1; \bQ/\bZ) {\times} H^{5-n}(B_1, \partial B_1; \bZ) \ar^(0.60){\cup}[r]
       & H^5(B_1, \partial B_1; \bQ/\bZ).
  }
  \endxy
\end{equation}
We have adjusted the excision isomorphism 
$H^{5-n}(M, B_2) \stackrel{\iso}{\lra} H^{5-n}(B_1, B_1{\cap}B_2)$,
replacing $B_1{\cap}B_2$ with $\partial B_1$.
The isomorphism at the upper right follows from the fact that
$H^4(B_2; \bQ/\bZ) = H^5(B_2; \bQ/\bZ) = 0$.
We will show that $\beta\bigl(H^{5-n-1}(M;\bQ/\bZ)\bigr)$ is in the image of $h_2^*$,
so that evaluating a certain cup product in the second row evaluates the desired cup product
in the first row. 
The details are, of course, different in the cases $n{=}1$ and $n{=}2$.
The lower square in diagram \eqref{eq:linkdiag2} commutes by naturality of cup products,
and we exploit the lowest line to evaluate the necessary cup products.

To do these computations we appeal to properties of the Serre spectral sequences.
For $E_r^{*,*}(B_1;\bQ/\bZ)$, we have
$E_2^{p,q}(B_1;\bQ/\bZ) \iso H^p(\Sigma_g;\bZ) \otimes H^q(S^1{\times}D^2; \bQ/\bZ)$.
Since we know
\begin{equation*}
H^1(B_1; \bQ/\bZ) \iso (\bZ/nw_1/\bZ){\dsm}(\bQ/\bZ)^{2g},  
\end{equation*}
we identify
$d_2 \colon E_2^{0,1} \ra E_2^{2,0}$ with multiplication by $nw_1$, and this is the only
non-zero differential.
The $E_2$- and $E_3$-pages look like
\begin{table}[h!]
  \centering
  \begin{tabular}[c]{cc}
    \begin{tabular}[h]{l|ccc}
  $q$ &   &   &
\\ 
$4$ & $0$ & $0$ & $0$  
\\
$3$ & $0$ & $0$ & $0$  
\\
$2$ &  $0$ & $0$ & $0$  
\\
$1$ &  $\bQ/\bZ$ & $(\bQ/\bZ)^{2g}$ & $\bQ/\bZ$
\\
$0$ & $\bQ/\bZ$ & $(\bQ/\bZ)^{2g}$ & $\bQ/\bZ$
\\ \hline
$p$ &   0  & 1 & 2
\end{tabular}
\hspace{3em}
\begin{tabular}[h]{l|ccc}
  $q$ &   &   &
\\ 
$4$ & $0$ & $0$ & $0$  
\\
$3$ & $0$ & $0$ & $0$  
\\
$2$ &  $0$ & $0$ & $0$  
\\
$1$ &  $\bZ/nw_1\bZ$ & $(\bQ/\bZ)^{2g}$ & $\bQ/\bZ$
\\
$0$ & $\bQ/\bZ$ & $(\bQ/\bZ)^{2g}$ & $0$
\\ \hline
$p$ &   0  & 1 & 2
\end{tabular}
  \end{tabular}
  \caption{The spectral sequence $E_r^{*,*}(B_1; \bQ/\bZ)$}
  \label{tab:ssB1QmodZ}
\end{table}
It follows that the $E_3$-page is the $E_{\infty}$-page and that the formalism
of spectral sequences delivers an isomorphism 
\begin{equation}
  \label{eq:H2interpreted}
  j_0^* \colon  H^2(B_1; \bQ/\bZ) \stackrel{\iso}{\lra}  E_{\infty}^{1,1}(B_1)
  = E_2^{1,1}(B_1) \iso H^1(\Sigma_g; \bZ) {\ten} H^1(S^1{\times}D^2; \bQ/\bZ).
\end{equation}

For the spectral sequences associated to
\begin{equation*}
  (S^1{\times}D^2, S^1{\times}S^1) \lra (B_1, \partial B_1) \lra \Sigma_g,
\end{equation*}
the cohomology of the fiber pair $(S^1{\times}D^2, S^1{\times}S^1)$ is concentrated in two dimensions.
Our preferred generator of $H^2\bigl(S^1{\times}(D^2,S^1); \bZ \bigr)$ will be denoted
$1{\times}\delta^*(T)$,
where $T$ represents a standard generator of $H^1(S^1;\bZ)$ and $\delta^*$ is the connecting homomorphism
for the cohomology exact sequence of the pair $(D^2, S^1)$. Then we choose
$T_1' {\times} \delta^*(T)$ as preferred generator of $H^3\bigl(S^1{\times}(D^2,S^1)\bigr)$.
By Poincar\'{e} duality
\begin{equation*}
  H^3(B_1, \partial B_1; \bZ) \iso H_2(B_1;\bZ) \iso \bZ^{2g} \quad \text{and} \quad 
  H^4(B_1, \partial B_1;\bZ) \iso H_1(B_1;\bZ) \iso \bZ/nw_1\bZ {\dsm} \bZ^{2g},
\end{equation*}
so the only nonvanishing differential is $d_2 \colon E_2^{4,0} \ra E_2^{2,3}$, which may
be identified with multiplication by $nw_1$. 
Comparing the $E_2$- and $E_3$-pages of the spectral sequence for integer coefficients, we have these displays.
\begin{table}[h!]
  \centering
%  \begin{tabular}[c]{cc}
\begin{tabular}[h]{l|ccc}
  $q$ &   &   &
\\ 
$4$ & $0$ & $0$ & $0$  
\\
$3$ & $\bZ$ & $\bZ^{2g}$ & $\bZ$
\\
$2$ &   $\bZ$ & $\bZ^{2g}$ & $\bZ$
\\
$1$ & $0$ & $0$ & $0$  
\\
$0$ & $0$ & $0$ & $0$  
\\ \hline
$p$ &   0  & 1 & 2
\end{tabular}
\hspace{3em}
\begin{tabular}[h]{l|ccc}
  $q$ &   &   &
\\ 
$4$ & $0$ & $0$ & $0$  
\\
$3$ & $0$ & $\bZ^{2g}$ & $\bZ$
\\
$2$ &   $\bZ$ & $\bZ^{2g}$ & $\bZ/nw_1\bZ$
\\
$1$ & $0$ & $0$ & $0$  
\\
$0$ & $0$ & $0$ & $0$  
\\ \hline
$p$ &   0  & 1 & 2
\end{tabular}
 % \end{tabular}
  \caption{The spectral sequence $E_r^{*,*}(B_1, \partial B_1;\bZ)$}
  \label{tab:ssBonebdy}
\end{table}
It follows that the $E_3$-page is the $E_{\infty}$-page and there is an isomorphism
\begin{equation}
  \label{eq:H3interpreted}
\tilde{\jmath}_0^* \colon  H^3(B_1, \partial B_1; \bZ) \iso E_{\infty}^{1,2}(B_1, \partial B_1) = E_2^{1,2}(B_1, \partial B_1)
  \iso H^1(\Sigma_g;\bZ){\ten}H^2\bigl(S^1{\times}(D^2, S^1); \bZ\bigr).
\end{equation}

We need a variant spectral sequence with $\bQ/\bZ$-coefficients,
denoted $E_r^{*,*}(B_1, \partial B_1; \bQ/\bZ)$,
along with the product pairing
\begin{equation*}
  E_r^{*,*}(B_1;\bQ/\bZ) \times E_r^{*,*}(B_1,\partial B_1;\bZ) \lra E_r^{*,*}(B_1,\partial B_1; \bQ/\bZ),
\end{equation*}
possessing the familiar properties.
For the target, we need only the fact that there is an isomorphism
\begin{align}
  \label{eq:H5interpreted}
  \tilde{\jmath}_1^* \colon
  H^5(B_1, \partial B_1; \bQ/\bZ) &\iso E_{\infty}^{2,3}(B_1, \partial B_1; \bQ/\bZ) = E_2^{2,3}(B_1, \partial B_1; \bQ/\bZ) \notag
  \\
  &\iso H^2(\Sigma_g;\bZ){\ten} H^3\bigl(S^1{\times}(D^2,S^1); \bQ/\bZ\bigr)
\end{align}
Finally, we need to relate part of the spectral sequence $E_r^{*,*}(\partial B_1;\bZ)$ 
of the fibration $S^1{\times}S^1 \ra \partial B_1 \ra \Sigma_g$
to the spectral sequence $E_r^{*,*}(B_1, \partial B_1;\bZ)$.
Translating the results of subsection \ref{cohoB1bdyB1} into the present context,
the $E_2$- and $E_3$- pages of the spectral sequence are as follows.
\begin{table}[h!]
  \centering
\begin{tabular}[h]{l|ccc}
  $q$ &   &   &
\\ 
$3$ & $0$ & $0$ & $0$  
\\
$2$ &   $\bZ$ & $\bZ^{2g}$ & $\bZ$
\\
$1$ &  $\bZ^2$ & $\bZ^{2g}{\dsm}\bZ^{2g}$ & $\bZ^2$
\\
$0$ & $\bZ$ & $\bZ^{2g}$ & $\bZ$
\\ \hline
$p$ &   0  & 1 & 2
\end{tabular}
\hspace{3em}
\begin{tabular}[h]{l|ccc}
  $q$ &   &   &
\\ 
$3$ & $0$ & $0$ & $0$
\\
$2$ &  $0$ & $\bZ^{2g}$  & $\bZ$ 
\\
$1$ & $\bZ$ & $\bZ^{2g}{\dsm}\bZ^{2g}$ & $\bZ {\dsm} \bZ/n\bZ$  
\\
$0$ & $\bZ$ & $\bZ^{2g}$  & $\bZ/n\bZ$
\\ \hline
$p$ &   0  & 1 & 2
\end{tabular}
  \caption{The spectral sequence $E_r^{*,*}( \partial B_1;\bZ)$}
  \label{tab:ssbdyBone}
\end{table}
The $E_3$-page is again the $E_{\infty}$-page and we have
\begin{equation}
  \label{eq:H2bdyBone}
  \xy \UCMT \xymatrix{
j_1^* \colon  H^2(\partial B_1;\bZ) \ar@{->>}[r] & E_3^{1,1} = E_2^{1,1} \iso H^1(\Sigma_g; \bZ) \ten H^1(S^1{\times}S^1;\bZ)
}
\endxy
\end{equation}
mapping the torsion-free part of $H^2(\partial B_1; \bZ)$ isomorphically to the target.
Compare with Proposition \ref{prop:h2torsionfreeB1}.
\begin{theorem}
  \label{thm:H2linking}
  The linking pairing
  \begin{equation*}
        \lambda \colon \bigl(H^2(M; \bQ/\bZ)/\Ker{\beta}\bigr) \times \bigl(H^2(M; \bQ/\bZ)/\Ker{\beta}\bigr) \lra \bQ/\bZ
  \end{equation*}
 may be  described as follows.
        Let $I \colon H^1(\Sigma_g; \bZ) \times H^1(\Sigma_g; \bZ) \ra \bZ$
        denote the cup product pairing $I(x, y) = \lan x{\cup}y, [\Sigma_g])$
        dual to the intersection pairing on $H_1(\Sigma_g;\bZ)$.
        Identify  $\bZ/\ell_2\bZ$ with
        the subgroup of $\bQ/\bZ$ generated by $1/\ell_2$, and 
        let $\lambda_0 \colon \bZ/\ell_2\bZ \times  \bZ/\ell_2\bZ \ra \bQ/\bZ$
        denote the pairing $\lambda_0(a/\ell_2, b/\ell_2) = (a \cdot b)/\ell_2  \in \bQ/\bZ$.
        Identifying  $\bQ/\bZ$ with $H^1(S^1{\times}D^2;\bQ/\bZ)$  via $q \mapsto (T_1'{\times}1)q$, the composition
        \begin{equation*}
          \xy \UCMT \xymatrix{
            \phi \colon H^2(M;\bQ/\bZ) \ar^{k_1^*}[r]
            & H^2(B_1;\bQ/\bZ) \ar^(0.35){j_0^*}[r]
            & H^1(\Sigma_g;\bZ) {\ten} H^1(S^1{\times}D^2; \bQ/\bZ)
          }
          \endxy
        \end{equation*}
        identifies the domain of the linking pairing with elements $x{\ten}(a/\ell_2)$, with $x \in H^1(\Sigma_g;\bZ)$
        and $a \in \bZ/\ell_2\bZ \subset \bQ/\bZ$.  With this convention, the linking pairing is given by
       \begin{equation*}
       \lambda(x{\ten}(a/\ell_2), y{\ten}(b/\ell_2)) = I(x, y) \cdot \lambda_0(a,b)= I(x,y) \cdot (a \cdot b)/\ell_2.
        \end{equation*}
           \end{theorem}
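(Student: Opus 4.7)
The plan is to trace the cup product $\langle z \cup \beta(z'), [M]\rangle$ through diagram \eqref{eq:linkdiag2} with $n{=}2$, so that the evaluation occurs inside $H^5(B_1, \partial B_1; \bQ/\bZ)$, and then to compute by multiplicativity in the Serre spectral sequences of $B_1 \ra \Sigma_g$ and $(B_1, \partial B_1) \ra \Sigma_g$ with their various coefficients.

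First I would check the applicability of diagram \eqref{eq:linkdiag2}.  For $z' \in H^2(M;\bQ/\bZ)/\Ker{\beta}$, the class $\beta(z')$ lies in the torsion subgroup of $H^3(M;\bZ)$, which by Corollary \ref{MVBHthree} is $(\bZ/\ell_2\bZ)^{2g}$.  Since $H^3(B_2;\bZ) \iso \bZ$ is torsion-free by Proposition \ref{prop:C23cohomology}, the restriction $H^3(M;\bZ) \ra H^3(B_2;\bZ)$ kills $\beta(z')$, so $\beta(z')$ lifts to a class $\tilde\beta(z') \in H^3(M, B_2;\bZ)$, and via excision to $H^3(B_1, \partial B_1;\bZ)$.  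Any two such lifts differ by an element in the image of $H^2(B_2;\bZ)$, which will not affect the subsequent cup product with $k_1^*(z) \in H^2(B_1;\bQ/\bZ)$ by the structure of the bottom row of \eqref{eq:linkdiag2}.

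Next I would identify each class under the spectral sequence isomorphisms \eqref{eq:H2interpreted}, \eqref{eq:H3interpreted}, \eqref{eq:H5interpreted}.  Under $\phi$, the class $z$ corresponds to $x \ten (T_1'{\times}1)(a/\ell_2)$ in the filtration $(1,1)$ piece for $E_\infty^{*,*}(B_1;\bQ/\bZ)$.  For $\tilde\beta(z')$: using Corollary \ref{MVBHthree}, the Bockstein sends the torsion class $y \ten (b/\ell_2) \in H^2(M;\bQ/\bZ)$ to the class $y \ten b$ in $H^1(\Sigma_g;\bZ) \ten (\bZ/\ell_2\bZ) \subset H^3(M;\bZ)$; choosing an integer representative realizes the excised lift in $E_\infty^{1,2}(B_1, \partial B_1;\bZ) \iso H^1(\Sigma_g;\bZ) \ten H^2(S^1{\times}(D^2, S^1);\bZ)$ as $y \ten b(1{\times}\delta^*(T))$.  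Then I would compute the cup product by multiplicativity of the spectral sequence pairing:
\begin{equation*}
  \bigl( x \ten (T_1'{\times}1)(a/\ell_2) \bigr) \cup \bigl( y \ten b(1{\times}\delta^*(T)) \bigr)
  = (x \cup y) \ten \tfrac{ab}{\ell_2}(T_1'{\times}\delta^*(T)),
\end{equation*}
a class in $E_\infty^{2,3}(B_1, \partial B_1;\bQ/\bZ)$.  Evaluation on the fundamental class of the pair factors through \eqref{eq:H5interpreted} as the base pairing $\langle x \cup y, [\Sigma_g]\rangle = I(x,y)$ times the fiber evaluation $\langle T_1'{\times}\delta^*(T), [S^1{\times}(D^2, S^1)]\rangle = 1$, producing the claimed value $I(x,y) \cdot (ab)/\ell_2 \in \bQ/\bZ$.

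The main obstacle will be the second step, namely verifying precisely that the excised lift $\tilde\beta(z') \in H^3(B_1, \partial B_1;\bZ)$ is represented on $E_\infty^{1,2}$ by $y \ten b(1{\times}\delta^*(T))$.  This requires comparing the coefficient-sequence Bockstein on $M$ with the connecting homomorphism for the fiber pair $(D^2, S^1)$, using that $\beta \colon H^2(B_1;\bQ/\bZ) \ra H^3(B_1;\bZ)$ is itself zero (since the domain is divisible and $H^3(B_1;\bZ) \iso \bZ$), so the actual $\beta$ on $M$ is visible only after lifting to the pair.  Concretely, I would trace a representative cochain for $y \ten (b/\ell_2)$ through the universal coefficient sequence \eqref{seq:QZcoefficients} and the long exact sequence of $(M, B_2)$, and use naturality of excision together with the Serre spectral sequence product to verify the identification; the key observation is that the factor $1/\ell_2$ in the fiber $\bQ/\bZ$-coefficient becomes an integer multiple of $1{\times}\delta^*(T)$ precisely because $\delta^*$ arises as the obstruction for the same fractional class to lift integrally across $(D^2, S^1)$.
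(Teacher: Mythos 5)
Your overall strategy is the same as the paper's: lift $\beta(z')$ across $H^{3}(M,B_2;\bZ)\iso H^3(B_1,\partial B_1;\bZ)$ using diagram \eqref{eq:linkdiag2}, identify classes through the spectral sequence isomorphisms \eqref{eq:H2interpreted}, \eqref{eq:H3interpreted}, \eqref{eq:H5interpreted}, and evaluate by multiplicativity over $\Sigma_g$. The preliminary points you make are fine: $\beta(z')$ is killed on $B_2$ since $H^3(B_2;\bZ)\iso\bZ$, and the ambiguity of the lift (the image of $H^2(B_2;\bZ)$) is harmless because the product is pushed forward to $H^5(M;\bQ/\bZ)$ through the isomorphism $h_2^*$.

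The genuine gap is exactly the step you flag as ``the main obstacle,'' and it is not a routine verification: your identification of the excised lift of $\beta(z')$ as $y\ten b(1{\times}\delta^*(T))$ is asserted with no unit factors, whereas the whole content of the paper's proof is the bookkeeping of such factors. The class $z'$ is coordinatized through its restriction to $B_1$ (via $\phi=j_0^*\com k_1^*$), but the torsion in $H^3(M;\bZ)$ arises as the Mayer--Vietoris cokernel in which the $B_1$- and $B_2$-sides are related by the gluing $g$, with $g^*(1{\times}T_2')=-sw_2^2(T_1{\times}1)+\ell_2(1{\times}T_2)$ \eqref{eq:goncohomology}. Consequently the two coordinates of $z'$ on the two solid-torus sides differ by the unit $-sw_2^2$ (this is the content of \eqref{eq:hatj1j2}), and the lift of $\beta(z')$ to $H^3(B_1,\partial B_1;\bZ)$ acquires a compensating unit (the paper's \eqref{eq:mainlift2element} produces the coefficient $sw_1^2b$ relative to its chosen identification $h'$ of \eqref{eq:hprimedef}); the clean answer $I(x,y)\,ab/\ell_2$ only emerges because $(sw_1^2)(sw_2^2)=s^2w_1^2w_2^2\equiv 1 \pmod{\ell_2}$, the square of the defining relation \eqref{eq:relprime1}. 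In your sketch the parameters $s,r,w_1,w_2$ never appear, so the cancellation that makes the final formula independent of the weights is assumed rather than proved; your heuristic about ``the fractional class lifting integrally across $(D^2,S^1)$'' computes the Bockstein correctly only for the coordinate of $z'$ on the $B_2$-side of the gluing, not for the $\phi$-coordinate on the $B_1$-side that you then cup against. To close the gap you must do what the paper does: compute the Mayer--Vietoris connecting map $\delta^*\colon H^2(\partial B_1;\bZ)\to H^3(M;\bZ)$ explicitly in the bases of Proposition \ref{B1and2}, relate it to $\delta_1^*\colon H^2(\partial B_1;\bZ)\to H^3(B_1,\partial B_1;\bZ)$ as in diagram \eqref{diag:liftlink2b}, and simultaneously express $k_1^*$ on $H^2(M;\bQ/\bZ)/\Ker\beta$ through the compatibility condition $j_1^*(x_1)=j_2^*(x_2)$, so that the unit factors on the two arguments are tracked and shown to cancel modulo $\ell_2$.
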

\begin{proof}
  Starting the descent through the rows of diagram \eqref{eq:linkdiag2},
  the top row of the following diagram shows we may lift the torsion  elements of $H^3(M;\bZ)$ to elements of
$H^3(M, B_2; \bZ) \iso H^3(B_1, \partial B_1; \bZ)) \iso H_2(B_1;\bZ) \iso \bZ^{2g}$.
\begin{equation}
  \label{diag:liftlink2}
  \xy \UCMT \xymatrix@C-1.75ex{
    &    H^3(M, B_2) \ar^(0.50){h_2^*}[r] \ar_{{\rm exc}^*}^{\iso}[d] & H^3(M) \ar^(0.48){k_2^*}[r] \ar^{k_1^*}[d] &  H^3(B_2) \ar^{j_2^*}[d]
    & \bZ^{2g}  \ar^(0.35){h_2^*}[r] \ar_{{\rm exc}^*}^{\iso}[d] & (\bZ/\ell_2\bZ)^{2g} \dsm \bZ \ar^{k_1^*}[d] \ar^(0.70){k_2^*}[r] & \bZ
\\
H^2(B_1{\cap}B_2) \ar^{\delta^*}[r] & H^3(B_1, B_1{\cap}B_2) \ar^(0.6){h_{12}^*}[r] & H^3(B_1) \ar^(0.45){j_1^*}[r] & H^3(B_1{\cap}B_2)
&      \bZ^{2g}   \ar^{h_{12}^*=0}[r]    &  \bZ,
}
\endxy
\end{equation}
Indeed, the torsion subgroup $(\bZ/\ell_2\bZ)^{2g} \subset \Ker{k_2^*}$, so the subgroup lifts back to $H^3(M, B_2; \bZ)$.
That $h_{12}^*{=}0$ is provided by Corollary \ref{cor:h3B1}.

To go farther with this, observe that the diagram \eqref{diag:liftlink2}
provides a factorization $h_2^*\com({\rm exc}^*)^{-1}\com \delta^*$ of the Mayer-Vietoris connecting homomorphism associated
with the decomposition $M{=}B_1{\cup}B_2$:
\begin{equation*}
  \xy \UCMT \xymatrix{
    \delta^* \colon   H^2(B_1{\cap}B_2;\bZ) \ar@{->>}[r]
             & \Coker^2{(j_1^*, -j_2^*)} \iso  \bZ^{2g} \ten (\bZ/\ell_2\bZ) \ar@{>->}[r] & H^3(M; \bZ).
 }
\endxy
\end{equation*}
This homomorphism is evaluated in the proof of Corollary \ref{MVBHthree}.
To provide explicit lifts to $H^3(M,B_2;\bZ) \iso H^3(B_1, \partial B_1;\bZ)$ suitable for computations with diagram \eqref{eq:linkdiag2},
we evaluate  $\delta_1^*\colon H^2(\partial B_1;\bZ) \ra H^3(B_1, \partial B_1;\bZ)$,
having recalled that $B_1{\cap}B_2{=}\partial B_1$.  Consider
\begin{equation}
  \label{diag:liftlink2b}
  \xy \UCMT \xymatrix@R-4ex{
    & H^3(M, B_2;\bZ) \ar^(0.45){h_2^*}[r] \ar_(0.32){{\rm exc}^*}^(0.62){\iso}[dd] & \bZ^{2g}{\ten}(\bZ/\ell_2\bZ)\subset H^2(M;\bZ)
     \\ & &  \\
    H^2(\partial B_1;\bZ) \ar_{\delta_1^*}[r] \ar_{j_1^*}[dd] \ar'[ur]^{\delta^*}[uurr] & H^3(B_1, \partial B_1 ;\bZ)  \ar_{\tilde{\jmath}_0^*}^{\iso}[dd] & 
    \\  & &   \\
    H^1(\Sigma_g) \ten H^1(S^1{\times}S^1) \ar^(0.45){\id \ten \delta^*}[r]
    & H^1(\Sigma_g) \ten H^2(S^1{\times}(D^2, S^1)) \ar^(0.60){\id \ten h'}[r]
                               & H^1(\Sigma_g) \ten \bZ/\ell_2\bZ \ar_{\iso}[uuuu]
  }  \endxy
\end{equation}
In Corollary \ref{MVBHthree} we computed 
\begin{equation*}
\delta^* \colon H^2(B_1{\cap}B_2;\bZ) \lra \Coker{\bigl(H^2(B_1;\bZ){\dsm}H^2(B_2;\bZ)\ra H^2(B_1{\cap}B_2;\bZ)\bigr)} \subset H^3(M;\bZ) 
\end{equation*}
as displayed in diagram \eqref{diag:liftlink2b}, and we use the information in
Tables \ref{tab:ssbdyBone} and \ref{tab:ssBonebdy}  to factor $\delta^*$ in a way that provides representations adapted
for the computation of cup products.
From the spectral sequences,  the homomorphism \eqref{eq:H2bdyBone}
$j_1^* \colon H^2(\partial B_1; \bZ) \ra H^1(\Sigma_g;\bZ){\ten}H^1(S^1{\times}S^1;\bZ)$
 maps the torsion-free part of the domain isomorphically to the target.
By Poincar\'{e} duality, $H^3(B_1, \partial B_1;\bZ)$ is free abelian, and the homomorphism
\begin{equation*}
\tilde{\jmath}_0^* \colon H^3(B_1, \partial B_1;\bZ) \ra H^1(\Sigma_g ;\bZ){\ten}H^2\bigl(S^1{\times}(D^2,S^1);\bZ \bigr)
\end{equation*}
is an isomorphism.
Only the torsion-free part of $H^2(\partial B_1;\bZ)$ maps nontrivially under $\delta_1^*$, 
so $j_1^*$ and $\tilde{\jmath}_0^*$ allow us to interpret $\delta_1^*$ as $\id{\ten}\delta^*$.

In the proof of Corollary \ref{MVBHthree} we defined a homomorphism
$\bZ^2 \ra \bZ/\ell_2\bZ$, $(b_1,b_2) \mapsto \ell_2b_1 {+} sw_2^2b_2$ to identify
$\Coker{\bigl(H^2(B_1;\bZ){\dsm}H^2(B_2;\bZ)\ra H^2(B_1{\cap}B_2;\bZ) \bigr)}$,
and we now interpret it in terms of the bottom row of diagram \eqref{diag:liftlink2b}.
Applying the identification $\bZ/\ell_2\bZ = \bZ[\tfrac{1}{\ell_2}]/\bZ \subset \bQ/\bZ$,
define  $h \colon H^1(S^1{\times}S^1;\bZ) \ra \bZ/\ell_2\bZ$,
\begin{equation*}
h\bigl(b_1 (T_1{\times}1) + b_2 (1{\times}T_2)\bigr)= (\ell_2b_1)/\ell_2{+}(sw_2^2b_2)/\ell_2 = sw_2^2b_2/\ell_2.
\end{equation*}
Since
\begin{equation*}
 \delta^* \colon  H^1(S^1{\times}S^1;\bZ) \ra H^2(S^1{\times}(D^2, S^1);\bZ) \quad \text{is given by} \quad
  T_1{\times}1 \mapsto 0,\; 1{\times}T_2 \mapsto 1 \times \delta^*(T),
\end{equation*}
and we are working modulo $\ell_2$, there is a factorization $h = h'\com \delta^*$, where
\begin{equation}
  \label{eq:hprimedef}
  h' \colon H^2(S^1{\times}(D^2, S^1);\bZ) \ra \bZ/\ell_2\bZ \quad \text{is given by} \quad h'\bigl(1{\times}\delta^*(T)\bigr)= sw_2^2/\ell_2,
\end{equation}
Choose a torsion-free class $y_2 \in H^2(B_1;\bZ)$, assume $j_1^*(y_2) {=} y \times (1{\times}T_2')$, and we
may compute
\begin{equation*}
  \delta^*(y_2) {=} (\id{\ten}h) \com j_1^*(y_2) {=}(\id{\ten}h)\bigl( y{\times}(1{\times}T_2')\bigr)
  {=} y{\ten}sw_2^2/\ell_2 {=} (\id{\ten}h')\bigl( y{\times}(1{\times}\delta^*(T)\bigr)
          {=} (\id{\ten}h') \com \tilde{\jmath}_0^*\bigl(\delta^*(y_2)\bigr).
\end{equation*}
Turning this around, we start from a  torsion element $y{\ten}b/\ell_2 \in H^1(\Sigma_g){\ten}\bZ/\ell_2/\bZ \subset H^3(M)$.
The equation
\begin{equation}
  \label{eq:mainlift2element}
  \id {\ten} h' \bigl(y \ten sw_1^2b(1{\times}\delta^*(T)) \bigr) = y \ten s^2w_1^2w_2^2b/\ell_2 = y \ten b/\ell_2
\end{equation}
shows that  the appropriate lift of $y{\ten}b/\ell_2$ to $H^3(B_1, \partial B_1)$
is identified via $\tilde{\jmath}_0^*$ to 
$y \ten sw_1^2b\bigl(1{\times}\delta^*(T)\bigr)$.
Here we have used the square of the identity $r\ell_2 - sw_1w_2 = 1$ taken modulo $\ell_2$
to reduce $s^2w_1^2w_2^2$ in \eqref{eq:mainlift2element}.

To descend another row in  diagram \eqref{eq:linkdiag2}, we 
detect elements of $H^2(M; \bQ/\bZ)$ in  $H^2(B_1; \bQ/\bZ)$ using the following diagram.
\begin{equation*}
    \xy \UCMT \xymatrix@C-1.20ex{
    H^2(M, B_1) \ar^(0.55){h_1^*}[r] \ar_{{\rm exc}^*}^{\iso}[d] & H^2(M) \ar^{k_1^*}[r] \ar^{k_2^*}[d]
    &  H^2(B_1) \ar^{j_1^*}[d] 
    & \bQ/\bZ \ar^(0.35){h_1^*}[r] \ar_{{\rm exc}^*}^{\iso}[d]
    & (\bZ/\ell_2\bZ)^{2g}\dsm \bQ/\bZ \ar^(0.60){k_1^*}[r] \ar^{k_2^*}[d]
                     & (\bQ/\bZ)^{2g} \ar^{j_1^*}[d]
\\
H^2(B_2, B_1{\cap}B_2) \ar^(0.6){h_{12}^*}[r] & H^2(B_2) \ar^(0.45){j_2^*}[r] 
& H^2(B_1{\cap}B_2) %\ar^(0.43){\delta_{12}^*}[r]
 & \bQ/\bZ \ar^(0.5){h_{12}^*}[r] & (\bQ/\bZ)^{2g} \ar^(0.45){j_2^*}[r] & (\bQ/\bZ)^{2g} \dsm (\bQ/\bZ)^{2g} 
}
\endxy
\end{equation*}
The cohomology groups are with $\bQ/\bZ$-coefficients, and we evaluate the groups on the right.
Essentially, the diagram reformulates 
the Mayer-Vietoris calculations done in the proof of Corollary \ref{MVBHtwo}.
By divisibility of $\bQ/\bZ$, $h_1^*$ has no component in the $(\bZ/\ell_2\bZ)^{2g}$ subgroup
of $H^2(M)$, so the diagram identifies
$(\bZ/\ell_2\bZ)^{2g} \iso H^2(M; \bQ/\bZ)/\Ker{\beta}$
with the subgroup of pairs $(x_1, x_2)$ in  $H^2(B_1; \bQ/\bZ) \dsm H^2(B_2; \bQ/\bZ)$ satisfying
$j_1^*(x_1) = j_2^*(x_2)$ in $H^2(B_1{\cap}B_2; \bQ/\bZ)$.  We will see that $k_1^*$ suffices to
describe these elements.

We appeal to 
Propositions \ref{prop:h2completeB1} and \ref{prop:h2completeB2} to expand on this observation.
With $\bQ/\bZ$ coefficients, the cited propositions give isomorphisms
\begin{gather*}
H^2(B_1;\bQ/\bZ)  \iso H^1(U; \bZ) \ten H^1(S^1{\times}D^2; \bZ) \ten \bQ/\bZ, \quad
H^2(B_2;\bQ/\bZ) \iso H^1(U;\bZ) \ten H^1(D^2{\times}S^1; \bZ) \ten \bQ/\bZ,
\\
H^2(B_1{\cap}B_2;\bQ/\bZ) \iso  H^1(U;\bZ) \ten H^1(S^1{\times}S^1;\bZ) \ten \bQ/\bZ \dsm \bZ/n\bZ,
\end{gather*}
the summand $\bZ/n\bZ$ being of no interest here. Now we  evaluate the arrows
$j_1^*$ and $j_2^*$
as follows.
We interpret an element of $H^2(B_1;\bQ/\bZ)$ as $(x_1{\times}T_1'{\times}1)a_1$, where $a_1{\in}\bQ/\bZ$ and
where $x_1\in H^1(U; \bZ)$ represents a generic integral combination of  $\{A_i , B_i, \; 1{\leq}i{\leq}g\}$,
primitive in the sense that the greatest common divisor of the coefficients is $1$.
Similarly, we interpret an element of $H^2(B_2; \bQ/\bZ)$ as $(x_2{\times}1{\times}T_2')a_2$,
where $x_2$ is another primitive linear combination.
Applying equations \eqref{eq:h2B1map} and \eqref{eq:h2B2bdyB2matrixrep}, we have, respectively,
\begin{equation}
  \label{eq:hatj1j2}
  j_1^*\bigl((x_1{\times}T_1'{\times}1)a_1\bigr) {=} (x_1{\times}T_1{\times}1)a_1
  \; \text{and} \;
  j_2^*\bigl((x_2{\times}1{\times}T_2')a_2\bigr)
       {=} (x_2{\times}T_1{\times}1)(-sw_2^2a_2) {+} (x_2{\times}1{\times}T_2)(\ell_2a_2).
\end{equation}
For these expressions to be equal, thus representing an element $x{\ten}a/\ell_2$ of
$(\bZ/\ell_2\bZ)^{2g}\iso H^2(M; \bQ/\bZ)/\Ker{\beta}$,
we must have $\ell_2a_2=a'_2 \in \bZ$.
Writing $a_2= a'_2/\ell_2 \in \bQ/\bZ$, setting
\begin{equation*}
  (x_1{\times}T_1{\times}1)a_1 = \bigl(x_2{\times}T_1{\times}1)(-sw_2^2a'_2/\ell_2)
\end{equation*}
implies that $x_1{=}x_2$, appealing to the primitivity condition,
and that $ a_1 {=} -sw_2^2a'_2/\ell_2 \in \bQ/\bZ$.
Then
$x{\ten}a/\ell_2 \in H^2(M;\bQ/\bZ)/\Ker{\beta}$ is identified with
$k_1^*(x{\ten}a/\ell_2) \in H^2(B_1;\bQ/\bZ)$, and we use the first equation in \eqref{eq:hatj1j2} to
subsequently rewrite
\begin{equation*}
k_1^*(x{\ten}a/\ell_2) = (x{\times}T_1'{\times}1)(-sw_2^2a/\ell_2)\in H^1(U;\bZ) \ten H^1(S^1{\times}D^2;\bQ/\bZ).   
\end{equation*}
Thus, we have reached the bottom row of diagram \eqref{eq:linkdiag2},
and we shift to the spectral sequence viewpoint by observing that the restriction
$B_1|U \ra U$ is a trivial subfibration of $B_1 \ra \Sigma_g$,
so we can more usefully represent $k_1^*(x{\ten}a)$ via $j_0^*$ from \eqref{eq:H2interpreted} as
\begin{equation*}
  j_0^* k_1^*(x{\ten}a) = (x{\times}T_1'{\times}1)(-sw_2^2a/\ell_2)\in H^1(\Sigma_g;\bZ) \ten H^1(S^1{\times}D^2;\bQ/\bZ).   
\end{equation*}
We evaluate the products using the following diagram, which adds an additional row to diagram \eqref{eq:linkdiag2}.
\begin{equation*}
  \xy \UCMT \xymatrix{
    H^2(B_1; \bQ/\bZ)\Ker{\beta} \times H^3(B_1, \partial B_1 ; \bZ) \ar^{\cup}[r] \ar_{j_0^* \times \tilde{\jmath}_0^*}[d]
                         & H^5(B_1, \partial B_1; \bQ/\bZ) \ar_{\iso}^{j_2^*}[d]
    \\
    \bigl( H^1(\Sigma_g){\ten}H^1(S^1{\times}D^2;\bQ/\bZ) \bigr) 
    \times \bigl( H^1(\Sigma_g){\ten}H^2(S^1{\times}(D^2, S^1); \bZ)\bigr)
    \ar^(0.65){\cup}[r]  & H^2(\Sigma_g){\ten}H^3(S^1{\times}(D^2, S^1); \bQ/\bZ).
  }
  \endxy
\end{equation*}
To compute the linking pairing $\lambda(x{\ten}a/\ell_2, y{\ten}b/\ell_2)$ for a pair of elements in
$H^2(M; \bQ/\bZ)/\Ker{\beta}$, let $y_3\in H^3(B_1,\partial B_1;\bZ)$ be
the lift of  $\beta(y{\ten}b/\ell_2)){\in}H^3(M;\bZ)$ characterized in \eqref{eq:mainlift2element}  by 
\begin{equation*}
\tilde{\jmath}_0^*( y_3)= y \ten sw_1^2b(1{\times}\delta^*(T)) 
\end{equation*}
and compute 
\begin{multline*}
  j_2^*\bigl(k_1^*( x{\ten}a ) \cup y_3 \bigr)
  =j_0^* k_1^*(x) \cup \tilde{\jmath}_0^*(y_3) 
  \\
     = \bigl((x{\times}T_1'{\times}1)(-sw_2^2a/\ell_2) \bigr) \cup \bigl( y{\times}(1{\times}\delta^*(T)) \bigr)(sw_1^2b)
     = (x{\cup}y) \times ( T_1'{\times}\delta^*(T)  ) (sw_2^2a \cdot sw_1^2b)/\ell_2)
     \\
     = (x{\cup}y){\times} ( T_1'{\times}\delta^*(T)  )(a\cdot b/\ell_2), 
   \end{multline*}
   squaring the relation $\ell_2r - sw_1w_2{=}1$ to eliminate $s^2w_1^2w_2^2$ modulo $\ell_2$.
   Now the claimed description of the linking pairing follows easily.
\end{proof}
\begin{theorem}
        \label{thm:H1linking}
The linking pairing
\begin{equation*}
    \lambda \colon H^1(M; \bQ/\bZ) \times H^3(M; \bQ/\bZ) \iso (\bZ/d\bZ) \times (\bZ/d\bZ) \lra \bQ/\bZ 
  \end{equation*}
  is isomorphic to the pairing
  \begin{equation*}
    \bZ/d \bZ \times \bZ/ d\bZ \lra \bQ/\bZ,\quad  (a,b) \mapsto a\cdot b/ d,
  \end{equation*}
  regarding $\bZ/ d\bZ$ as integers modulo $d$ in the usual way.
\end{theorem}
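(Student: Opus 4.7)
The plan is to follow the template established in the proof of Theorem \ref{thm:H2linking}. Both factors $H^1(M;\bQ/\bZ)/\Ker\beta$ and $H^3(M;\bQ/\bZ)/\Ker\beta$ are cyclic of order $d$, so the pairing $\lambda$ is determined by its value on the single pair of generators; it will suffice to compute one cup product $x \cup \beta(y) \in H^5(M;\bQ/\bZ) \iso \bQ/\bZ$ explicitly. I would set up the descent diagram analogous to \eqref{eq:linkdiag2} (now with $n{=}1$), using the excision isomorphism $H^*(M, B_2; -) \iso H^*(B_1, \partial B_1; -)$ and the vanishing of $H^4(B_2;-)$ and $H^5(B_2;-)$ (since $B_2$ is homotopy equivalent to the $3$-manifold $C_2^3$) to descend the computation to $B_1$ and its boundary.

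To identify the generator of $H^1(M;\bQ/\bZ)/\Ker\beta$ explicitly, I would use that this cyclic group is the Pontryagin dual $\Hom(TH_1(M;\bZ), \bQ/\bZ)$, with $TH_1(M;\bZ) = \lan c_2 \ran \iso \bZ/d\bZ$ given by the presentation of $\Gamma_1$ in Theorem \ref{theorem:pioneM}. The restriction to $H^1(B_1;\bQ/\bZ)$ is controlled by Lemma \ref{lemma:h1torsion}: the image of $c_2$ in $TH_1(B_1;\bZ)$ is a specific multiple of the generator of $\bZ/nw_1\bZ$, and dualization identifies the restriction of our generator as a specific element of $\Hom(\bZ/nw_1\bZ, \bQ/\bZ) \iso \bZ/nw_1\bZ \subset H^1(B_1;\bQ/\bZ)$.

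For the Bockstein image $\beta(y) \in TH^4(M;\bZ) \iso \bZ/d\bZ$, Corollaries \ref{cor:h3B1} and \ref{cor:h3B2} imply that the torsion summand of $H^4(M;\bZ)$ arises as the cokernel of the middle map in the Mayer--Vietoris sequence \eqref{diag:middlemain} in degree three. I would lift $\beta(y)$ through excision to $H^4(B_1, \partial B_1;\bZ) \iso H_1(B_1;\bZ) \iso \bZ/nw_1\bZ \dsm \bZ^{2g}$ via Poincar\'{e}--Lefschetz duality, and verify that the torsion summand of this lift pushes forward to a generator of $\bZ/d\bZ \subset TH^4(M;\bZ)$.

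Once both inputs are identified as specific elements in $H^1(B_1;\bQ/\bZ)$ and $H^4(B_1,\partial B_1;\bZ)$, I would evaluate the cup product using the Serre spectral sequences for $B_1 \ra \Sigma_g$ and $(B_1, \partial B_1)\ra \Sigma_g$ together with the product pairing into $E_r^{*,*}(B_1, \partial B_1;\bQ/\bZ)$, exactly as in the proof of Theorem \ref{thm:H2linking}. The product takes place in bidegree $(2,3)$, landing in $H^2(\Sigma_g;\bZ) \ten H^3\bigl(S^1{\times}(D^2, S^1); \bQ/\bZ\bigr) \iso \bQ/\bZ$. The main obstacle will be the bookkeeping: tracking the constants produced by the Bockstein, by Poincar\'{e}--Lefschetz duality, by excision, and by the spectral sequence edge homomorphisms, to verify that the final evaluation equals $(a\cdot b)/d \in \bQ/\bZ$ exactly rather than a unit multiple. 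The identity $r\ell_2 - sw_1w_2 = 1$, reduced modulo $d$ (which divides both $n$ and $\ell_2$), will be the key simplifying input.
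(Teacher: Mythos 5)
Your proposal follows essentially the same route as the paper's proof: descent through diagram \eqref{eq:linkdiag2} with $n{=}1$, identification of the torsion generator of $H^1(M;\bQ/\bZ)$ by Pontryagin duality with $TH_1(M;\bZ)=\langle c_2\rangle\iso\bZ/d\bZ$, a Poincar\'{e}--Lefschetz lift of $\beta(y)$ through excision to $H^4(B_1,\partial B_1;\bZ)\iso H_1(B_1;\bZ)$, and evaluation of the cup product via the Serre spectral sequences for $B_1\ra\Sigma_g$ and $(B_1,\partial B_1)\ra\Sigma_g$ in bidegree $(2,3)$, with the identity $r\ell_2-sw_1w_2=1$ squared modulo $d$ giving the final simplification to $a\cdot b/d$. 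The one slip is the citation: the map you actually need is the inclusion-induced $TH_1(B_1;\bZ)\iso\bZ/nw_1\bZ\ra TH_1(M;\bZ)\iso\bZ/d\bZ$, $c_1\mapsto -sw_2^2c_2$, which the paper extracts from the presentation in Theorem \ref{theorem:pioneM} (the relation $c_1c_2^{sw_2^2}$), not from Lemma \ref{lemma:h1torsion}, which concerns the maps out of $H_1(B_1{\cap}B_2;\bZ)$.
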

\begin{proof}
  The result follows from analysis of the diagram \eqref{eq:linkdiag2} with $n{=}1$. 

  First, we regard $\beta \colon H^3(M;\bQ/\bZ)/\Ker{\beta} \ra H^4(M;\bZ)$ as an identification on
  the subgroups $\bZ/d\bZ$. 
  That is, we write $\beta(b) = b$,
  and we consider lifts of torsion elements of $H^4(M; \bZ)$ to $H^4(M,B_2;\bZ)$ followed by their restrictions
  to $H^4(B_1, \partial B_1; \bZ)$.  By Poincar\'{e}-Lefschetz duality, there is a commuting diagram
  \begin{equation*}
    \xy \UCMT \xymatrix{
      H^4(B_1,\partial B_1;\bZ) \ar_{\cap [B_1]}^{\iso}[d]
      &  \ar_{\cap [M]}^{\iso}[d] \ar_(0.45){{\rm exc}^*}[l] H^4(M, B_2;\bZ) \ar^{h_2^*}[r]
          & H^4(M; \bZ) \ar_{\cap [M]}^{\iso}[d]
      \\
      H_1(B_1 ;\bZ)
      & \ar_{=}[l] H_1(B_1 ; \bZ)   \ar^{j_{1*}}[r]
      & H_1(M;\bZ). 
    }
    \endxy
  \end{equation*}
  In order to obtain an expression for $h_2^*\com({\rm exc}^*)^{-1}$ on torsion elements,
  appeal to the proof of Theorem \ref{theorem:pioneM} to find a description of the homomorphism
  \begin{multline*}
    \pi_1(B_1) \iso  \lan a_i, b_i, c_1, \; 1 \leq i \leq g \;
    | \; [a_i, c_1], [b_i, c_1] , \prod_{1 \leq i \leq g}[a_i,b_i]c_1^{nw_1} \ran
    \\
    \lra \pi_1(M) \iso \lan a_i, b_i, c_1, c_2, 1 {\leq} i {\leq} g \;
                | \; [a_i, c_j], [b_i, c_j], \prod_{1 \leq i \leq g}[a_i, b_i]c_2^{nw_2}, c_1c_2^{sw_2^2}, c_2^{\ell_2} \ran,
  \end{multline*}
  which we abelianize and simplify to obtain
  \begin{equation*}
    a_i \mapsto a_i, \quad b_i \mapsto b_i, \; \text{for $1{\leq}i{\leq}g$, and} \quad c_1 \mapsto c_1= -sw_2^2c_2
  \end{equation*}
  describing
  \begin{equation*}
    H_1(B_1;\bZ) \iso \bZ^{2g}{\dsm}(\bZ/nw_1\bZ) \stackrel{j_{1*}}{\lra} H_1(M;\bZ) \iso \bZ^{2g}{\dsm}\bZ/d\bZ,
  \end{equation*}
  recalling that $d{=}\gcd(\ell_2, n)$. 
  The class of $c_1$ represents a generator of $\bZ/nw_1\bZ \subset H_1(B_1;\bZ)$ and the class of $c_2$ represents a
  generator of $\bZ/d\bZ \subset H_1(M;\bZ)$. Thus we represent the homomorphism on torsion by
  \begin{equation}
    \label{eq:joneonhomologytorsion}
    \bZ/nw_1\bZ \lra \bZ/d\bZ, \quad 1 \mapsto -sw_2^2.
  \end{equation}
  Square the identity \eqref{eq:relprime1} $\ell_2r - sw_1w_2{=}1$ to see
  $b'=-sw_1^2b \in \bZ/nw_1\bZ \subset H^4(B_1, \partial B_1;\bZ)$
 lifts $b \in \bZ/d\bZ \subset H^4(M;\bZ)$ back to $H^4(B_1, \partial B_1; \bZ)$.

 Now we move to calculation of $k_1^* \colon H^1(M;\bQ/\bZ) \ra H^1(B_1; \bQ/\bZ)$ on elements corresponding to torsion
  in integral homology. 
  Combining  Proposition \ref{prop:C13cohomology} with the universal coefficient
  sequence in cohomology \eqref{seq:QZcoefficients}, we have
  \begin{equation*}
     H^1(B_1; \bQ/\bZ) \iso (\bQ/\bZ)^{2g}{\dsm}\bZ/nw_1\bZ,
   \end{equation*}
   and we evaluated $H^1(M;\bQ/\bZ)\iso (\bQ/\bZ)^{2g}{\dsm}\bZ/d\bZ$ in \eqref{eq:QZcohomologyM}.
  We have a segment of the exact sequence of the pair $(M, B_1)$
  \begin{equation}
    \label{eq:detectH1M}
    \xy \UCMT \xymatrix{
      H^1(M; \bQ/\bZ) \ar^{k_1^*}[r] \ar^{\iso}[d] & H^1(B_1; \bQ/\bZ) \ar[r] \ar^{\iso}[d] & H^2(M, B_1 ;\bQ/\bZ) \ar^{\iso}[d]
      \\
      (\bQ/\bZ)^{2g}{\dsm}\bZ/d\bZ \ar[r] & (\bQ/\bZ)^{2g}{\dsm}\bZ/nw_1\bZ \ar[r] & \bQ/\bZ,
    }
    \endxy
  \end{equation}
  and we want to compute the homomorphism from  $\bZ/d\bZ$ to  $\bZ/nw_1\bZ$.

  By the universal coefficient theorem \eqref{seq:QZcoefficients} for cohomology, $k_1^*$ is isomorphic to
  \begin{equation*}
    \Hom( H_1(M;\bZ)  , \bQ/\bZ) \ra \Hom(B_1;\bZ), \bQ/\bZ).
  \end{equation*}
Since we are interested in $k_1^*$ on the parts arising from the torsion in the homology, we need 
\begin{equation}
  \label{eq:jonecohomologytorsion}
  \xy \UCMT \xymatrix@C+5em{
    \bZ/d\bZ \iso \Hom( \bZ/d\bZ, \bQ/\bZ) \ar^(0.47){\Hom(\cdot (- sw_2^2), \id)}[r] &  \bZ/nw_1\bZ \iso \Hom(\bZ/nw_1\bZ, \bQ/\bZ),
    }
    \endxy
\end{equation}
and we identify the component of $k_1^*$ that interests us with
\begin{equation*}
  \bZ/d\bZ \stackrel{\cdot (-n'sw_1w_2^2)}{\lra} \bZ/nw_1\bZ, \quad a \mapsto -n'sw_1w_2^2\cdot a,
\end{equation*}
where $n{=}dn'$ with $d = \gcd(n, \ell_2)$.
 Since we are working with $\bQ/\bZ$-coefficients in the rest of the calculation,
  it is better to think of $\bZ/d\bZ$ as the subgroup of $\bQ/\bZ$ generated by $(1/d)$,
  and similarly for $\bZ/nw_1\bZ$.  Making this adjustment, we have $k_1^*$ evaluated as
  \begin{equation}
    \label{eq:jonecohomologytorsion2}
    \bZ/d \bZ \iso \bZ\bigl[\tfrac{1}{d}\bigr]/\bZ \lra \bZ/nw_1\bZ \iso \bZ\bigl[\tfrac{1}{nw_1}\bigr]/\bZ, \quad
     1/d \mapsto -sw_2^2\cdot(1/d)= -sw_2^2\cdot(n'w_1/nw_1)
  \end{equation}
Thus, we have identified the elements in the lefthand entry of the third row of diagram \eqref{eq:linkdiag2} whose product,
when evaluated, yields the value of the linking pairing.
 
Now we finish computing the linking pairing $\lambda(a,b)$ for $(a, b) \in H^1(M;\bQ/\bZ)/\Ker{\beta} \times H^3(M;\bQ/\bZ)/\Ker{\beta}$
using the following diagram, which adds a row at the bottom of diagram \eqref{eq:linkdiag2}.
  \begin{equation*}
  \xy \UCMT \xymatrix{
    H^1(B_1; \bQ/\bZ)/\Ker{\beta} \times H^4(B_1, \partial B_1 ; \bZ) \ar^{\cup}[r] \ar^{j_0^* \times \tilde{\jmath}_0^*}[d]
         & H^5(B_1, \partial B_1; \bQ/\bZ) \ar_{\iso}^{j_2^*}[d]
    \\
    \bigl( H^0(\Sigma_g){\ten}H^1(S^1{\times}D^2;\bQ/\bZ) \bigr) 
    \times \bigl( H^2(\Sigma_g){\ten}H^2(S^1{\times}(D^2, S^1); \bZ)\bigr)
    \ar^(0.65){\cup}[r]  & H^2(\Sigma_g){\ten}H^3(S^1{\times}(D^2, S^1); \bQ/\bZ).
  }
  \endxy
\end{equation*}
Returning to the display in Table \ref{tab:ssB1QmodZ} of the $E_3$-page of the spectral sequence $E_r(B_1; \bQ/\bZ)$, the extension
\begin{equation*}
  0 \ra E_3^{1,0} \iso (\bQ/\bZ)^{2g} \ra H^1(B_1;\bQ/\bZ) \ra \bZ/nw_1\bZ \iso E_3^{0,1} \ra 0  
\end{equation*}
is split, and $E_3^{0,1} \subset E_2^{0,1} \iso H^0(\Sigma_g;\bZ){\ten}H^1(S^1{\times}D^2;\bQ/\bZ)$.
We identify the lift of $a/d \in \bZ/d\bZ \subset H^1(M;\bQ/\bZ)$ first with $k_1^*(a/d) = -sw_2^2(a/d)=-n'sw_1w_2^2(a/nw_1) \in H^1(B_1, \bQ/\bZ)$
and then with
\begin{equation*}
(1_{\Sigma_g}{\times}T_1'{\times}1)\cdot\bigl(-sw_2^2(a/d)\bigr) \in H^0(\Sigma_g;\bZ){\ten}H^1(S^1{\times}D^2;\bQ/\bZ).  
\end{equation*}
Considering the display in Table \ref{tab:ssBonebdy} of the $E_3$-page of the spectral sequence $E_r(B_1, \partial B_1;\bZ)$, the
extension
\begin{equation*}
  0 \ra E_3^{2,2} \iso \bZ/nw_1\bZ \ra H^4(B_1 , \partial B_1 ; \bZ) \ra   \bZ^{2g} \iso E_3^{1,3} \ra 0  
\end{equation*}
splits, and
$\xy \UCMT \xymatrix@1{H^2(\Sigma_g;\bZ){\ten}H^2(S^1{\times}(D^2,S^1);\bZ) \iso E_2^{2,2}(B_1,\partial B_1; \bZ) \ar@{->>}[r] &  E_3^{2,2}} \endxy$.
With $-sw_1^2b \in H^4(B_1,\partial B_1; \bZ)$ satisfying $h_2^*\com({\rm exc}^*)^{-1}(-sw_1^2 b) = b\in \bZ/d\bZ \subset H^4(M;\bZ)$,
we represent $-sw_1^2b$ in $E_2^{2,2}$ by a coset $-sw_1^2b+ nw_1\bZ$ and then via $\tilde{\jmath}_0^*$ with
\begin{equation*}
  ([\Sigma_g] {\times}1{\times} \delta^*(T))\cdot(-sw_1^2b{+}nw_1\bZ) \subset H^2(\Sigma_g;\bZ){\ten}H^2(S^1{\times}(D^2,S^1);\bZ).
\end{equation*}
Then
\begin{align*}
j_2^*\bigl( k_1^*(a/d) \cup -sw_1^2b \bigr) &= (1{\times}T_1'{\times}1) \Bigl(-sw_2^2\frac{a}{d}\Bigr)
                                             \cup ([\Sigma_g]{\times}1{\times}\delta^*(T))\bigl(-sw_1^2b{+}nw_1\bZ\bigr)
  \\
                                           &=[\Sigma_g]{\times}T_1'{\times}\delta^*(T)\cdot \Bigl( \frac{a\cdot(s^2w_1^2w_2^2b)}{d} {+}an'w_1\bZ\Bigr) =
                                             [\Sigma_g]{\times}T_1'{\times}\delta^*(T)\cdot \Bigl( \frac{a\cdot b}{d} \Bigr),
\end{align*}
where the square of the identity $\ell_2r -sw_1w_2=1$ implies $s^2w_1^2w_2^2 \equiv 1$ modulo $d$. Once again,
the formula for $\lambda(a, b)$ follows easily.
  \end{proof}
\section{Discussion}\label{discussion}
We started with a program to compute invariants determined by the fundamental group and the cohomology of
$M^3_g(n) \star_{\ell_1,\ell_2} S^3_{\bw}$. Thus, initially there are five integer parameters in play, namely,
the genus $g$ of a surface $\Sigma_g$, the Euler class $n$ of a circle bundle over $\Sigma_g$, integer weights
$\bw = (w_1, w_2)$ for a circle action on $S^3$, and an integer $\ell_2$ characterizing the ``speed'' of the
circle action on the product of the circle bundle and the sphere.  The parameter $\ell_1$ is determined
by the greatest common divisor of $w_1$ and $w_2$, which we assume to be 1. From the geometry we created a splitting
$B_1{\cup}B_2$ along a four-manifold $B_1{\cap}B_2$ that is a torus bundle over $\Sigma_g$. The manifolds
$B_1$ and $B_2$ have additional structures that reflect the parameter triples $(g,n,w_i)$, respectively,
but when the data is assembled to data for $M^3_g(n) \star_{\ell_1,\ell_2} S^3_{\bw}$, the parameters $w_1$ and $w_2$
have disappeared from the invariants we have calculated.

One may conjecture that the manifolds $M^3_g(n) \star_{\ell_1,\ell_2} S^3_{\bw}$ determine a relatively small number
of homotopy types for each choice of parameters, or even diffeomorphims types, but each such manifold supports
an infinite family of splittings.  Since these parameters are involved in determination of the Sasakian structures
of the joins, the relationship between these differential structures and the topological splittings is worthy
of further investigation.
\providecommand{\MR}{\relax\ifhmode\unskip\space\fi MR }
% \MRhref is called by the amsart/book/proc definition of \MR.
\providecommand{\MRhref}[2]{%
  \href{http://www.ams.org/mathscinet-getitem?mr=#1}{#2}
}
\providecommand{\href}[2]{#2}

%\bibliography{sasakianclassify}
\end{document}